\documentclass{amsart}
\usepackage{amssymb,latexsym}
\usepackage{stmaryrd, comment}
\usepackage{xcolor}
\usepackage{hyperref}

\newtheorem{theorem}{Theorem}[section]
\newtheorem{lemma}[theorem]{Lemma}

\theoremstyle{definition}
\newtheorem{definition}[theorem]{Definition}

\newtheorem{proposition}[theorem]{Proposition}
\newtheorem{corollary}[theorem]{Corollary}
\newtheorem{question}[theorem]{Question}
\newtheorem{remark}[theorem]{Remark}

\numberwithin{equation}{section}

\newcommand{\D}{{\mathbb D}}

\newcommand{\C}{{\mathbb C}}

\newcommand{\N}{{\mathbb N}}

\newcommand{\T}{{\mathbb T}}

\newcommand{\HH}{\mathcal{H}}

\newcommand{\HD}{\mathcal{D}}

\makeatletter
\@namedef{subjclassname@2020}{%
  \textup{2020} Mathematics Subject Classification}
\makeatother

\begin{document}

\date{\today}

\author[C. Bellavita]{Carlo Bellavita}
\email{carlo.bellavita@gmail.com}
\email{carlo.bellavita@unimi.it}
\address{Dipartimento di Matematica ``F. Enriques''\\
  Dipartimento di Eccellenza MUR 2023-2027\\
Universit\'a degli Studi di Milano\\
Via C. Saldini 50\\
20133 Milano, Italy.}

\author[E. Dellepiane]{Eugenio Dellepiane}
\email{dellepianeeugenio@gmail.com}
\email{e.dellepiane@utwente.nl}
\address{
Department of Applied Mathematics,
University of Twente, 
7500 AE Enschede, The Netherlands}

\author[S. Luo]{Shuaibing Luo}
\email{sluo@hnu.edu.cn}
\address{School of Mathematics, Hunan University, Changsha, 410082, PR China}


\title[A characterization of equality of $\mathcal{H}(b)$ and $\mathcal{D}(\mu)$]{A characterization of equality between de Branges--Rovnyak and Dirichlet spaces}

\begin{abstract}
We characterize when a de Branges--Rovnyak space $\mathcal{H}(b)$ coincides with a
harmonically weighted Dirichlet space $\mathcal D(\mu)$. We obtain a necessary and
sufficient condition for this identification, which settles a long-standing open
problem. We illustrate the result with several examples, that include measures supported on uncountable sets.
\end{abstract}

\subjclass[2020]{Primary: 30H45, 30H99; Secondary: 30H80, 30H05}
\keywords{de Branges--Rovnyak spaces; Harmonically weighted Dirichlet spaces; Corona theorem}

\maketitle


\section{Introduction}

In this article, we completely characterize when a de Branges--Rovnyak space $\mathcal{H}(b)$ coincides (with equivalence of norms) with a harmonically weighted Dirichlet space $\mathcal{D}(\mu)$. These are two classes of Hilbert spaces of analytic functions on the unit disk $\mathbb{D}$ of the complex plane, that are contained in the Hardy space $H^2$. The former is parametrized by functions $b$ in $H^\infty_1,$ the closed unit ball of the Hardy space of bounded analytic functions $H^\infty$. The latter is parametrized by finite positive Borel measures $\mu$ on the unit circle $\mathbb{T}=\partial\mathbb{D}$. 

Let us recall the state of the art for this problem. The \emph{fil rouge} between the two classes of spaces has been first discovered by Sarason in 1997. In  \cite{Sa97}, he found out that when $\mu=\delta_\zeta$ is the Dirac delta centered at a point $\zeta$ in $\mathbb{T},$ and $b$ is the rational function
\begin{equation}\label{eq:Sarasonlocalrational}
b_\zeta(z)=\frac{(1-w_0)\overline{\zeta}z}{1-w_0\overline{\zeta}z}, \qquad z\in\mathbb{D},    
\end{equation}
where $w_0=(3-\sqrt{5})/2$, then the equality of sets $\mathcal{H}(b)=\mathcal{D}(\mu)$ holds, with \emph{equality} of norms, i.e., $\|\cdot\|_{\mathcal{H}(b)}\equiv\|\cdot\|_{\mathcal{D(\mu)}}$. Later, in 2010, Chevrot, Guillot and Ransford proved that the example studied by Sarason is essentially the only case with the full equality of norms. In \cite{CGR10}, they proved that $\mathcal{H}(b)=\mathcal{D}(\mu)$ with equality of norms if and only if $\mu=c\delta_\zeta (c>0)$ is a multiple of a Dirac delta anchored at some point in the boundary, and $b$ is another specific rational function, that is reminiscent of \eqref{eq:Sarasonlocalrational}, but also takes into account the number $c>0$.

When studying the equality $\mathcal{H}(b)=\mathcal{D}(\mu)$, one can assume that $b$ is a non-extreme point of the convex set $H^\infty_1$. More details will be given later, but in particular there always exists an auxiliary function $a$ in $H^\infty_1$ such that $|a|^2+|b|^2=1$ almost everywhere on $\mathbb{T}$. This function is often referred to as the \emph{Pythagorean mate} of $b$, and $(b,a)$ is called a \emph{Pythagorean pair}. In 2013, Costara and Ransford proved that whenever $a$ is a rational function, then the equality of sets $\mathcal{H}(b)=\mathcal{D}(\mu)$ holds (with \emph{equivalence} of norms) if and only if all the zeros of $a$ on the unit circle $\mathbb{T}$ are simple and $\mu$ is a measure whose support equals such boundary zero set. In particular, $\mu$ is a finite sum of Dirac deltas, generalizing the first example studied by Sarason. For the sake of completeness, there is also a third condition in the theorem of Costara--Ransford (namely, that all the zeros of $a$ lie outside of the spectrum of the inner factor of $b$). However, since it is more technical, we avoid further comments.

In 2021, Luo, Gu and Richter proved that every harmonically weighted Dirichlet space can be identified with an operator-valued de Branges--Rovnyak space, see \cite[Theorem 4.6]{LGR21}. This is a consequence of a very general and abstract result, and it involves certain properties of the shift operator on $\mathcal{D}(\mu)$.

More recently, Dellepiane, Peloso and Tabacco showed in \cite{DPT25} that the condition of Costara--Ransford can be rewritten in a language that does not require $a$ to be rational, and that an analogous statement to their theorem also holds without any assumption on the rationality of $a$. Namely, they proved that if $\mu$ is a finite sum of Dirac deltas, then $\mathcal{H}(b)=\mathcal{D}(\mu)$ if and only if
\begin{equation} \label{eq:potentialconditionstateofart}
    0<\inf_{z\in\mathbb{D}}|a(z)|^2 V_\mu(z) \leq \sup_{z\in\mathbb{D}}|a(z)|^2 V_\mu(z) <\infty,
\end{equation}
where $V_\mu\colon \mathbb{C}\to[0,+\infty]$ is a potential associated to the measure $\mu$,
\begin{equation}\label{eq:potentialintro}
V_\mu(z)=\int_{\mathbb{T}}\frac{\operatorname{d}\!\mu(\xi)}{|z-\xi|^2},\qquad z\in\mathbb{C}.
\end{equation}

In parallel, Bellavita, Dellepiane, Hartmann and Mashreghi used the same condition \eqref{eq:potentialconditionstateofart} on the interplay between $a$ and $V_\mu$ to construct an example of equality $\mathcal{H}(b)=\mathcal{D}(\mu)$, where $\mu$ has an infinite support. In particular, they proved that whenever $b=(1+u)/2$ for a one-component inner function $u$, then $\mathcal{H}(b)=\mathcal{D}(\mu)$ if and only if \eqref{eq:potentialconditionstateofart} holds. This forces $\mu$ to have the same support as the Clark measure of the one-component inner function $u$. In particular, $\mu$ is a (possibly) infinite sum of isolated atoms.



All these results that we listed, that span across the last three decades, leave open the general problem of characterizing, for arbitrary symbols
$b$ in $H^\infty_1$ and measures $\mu$, when
$\mathcal{H}(b)=\mathcal D(\mu)$. In this paper we give a necessary and sufficient condition for this identification.

The first issue in this problem is the dependence of \(\mathcal{H}(b)\) on the inner factor of~\(b\). The following question was posed in \cite[Question 6.3]{CR13}.

\begin{question}\label{cosrandques}
Let $b = b_i b_o$ be the inner-outer factorization of $b \in H^\infty_1$. Does $\mathcal D(\mu) = \mathcal{H}(b)$ imply that $\mathcal D(\mu) = \HH(b_o)$?
\end{question}

Our first result gives an affirmative answer, by establishing a stronger structural property of the Pythagorean pair  \((b,a)\). We say that \((b,a)\) is a Corona pair if
\[
\inf_{z\in\mathbb D}\bigl(|a(z)|+|b(z)|\bigr)>0.
\]
\begin{theorem}\label{T:main1}
    If $\mathcal{H}(b)=\mathcal{D}(\mu)$, then $b$ is non-extreme and the Pythagorean pair $(b,a)$ is a Corona pair. In particular, it follows that $\mathcal{H}(b)=\HH(b_o),$ where $b_o$ is the outer factor of $b$.
\end{theorem}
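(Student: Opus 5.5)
We will use three standard facts about $\mathcal{D}(\mu)$: it contains the polynomials densely; it is invariant under the shift $S$ (multiplication by $z$); and $\|zf\|_{\mathcal{D}(\mu)}^2=\|f\|_{\mathcal{D}(\mu)}^2+\|f\|_{H^2}^2$, so $S$ is a bounded $2$-isometry with $\|Sf\|\ge\|f\|$. We will also use standard facts about $\mathcal{H}(b)$, quoting e.g. Sarason's book.

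\textbf{Step 1: $b$ is non-extreme.} If $b$ is extreme, the polynomials are not dense in $\mathcal{H}(b)$. If $\mathcal{H}(b)$ is not even shift-invariant, we are already done, since $\mathcal{D}(\mu)$ is. If it is shift-invariant, we use that for extreme $b$ the polynomials do not form a dense subset of $\mathcal{H}(b)$. By the closed graph theorem, the identity $\mathcal{H}(b)=\mathcal{D}(\mu)$ is an isomorphism, so it carries the dense set of polynomials of $\mathcal{D}(\mu)$ onto a dense subset of $\mathcal{H}(b)$, a contradiction. Hence $b$ is non-extreme and has a Pythagorean mate $a$, which we normalize to be outer with $a(0)>0$.

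\textbf{Step 2: $(b,a)$ is a Corona pair.} For non-extreme $b$, the space $\mathcal{H}(b)$ is the range space of the pair, with
\[
f\in\mathcal{H}(b)\iff T_{\bar b}f\in T_{\bar a}H^2,\qquad \|f\|_b^2=\|f\|_2^2+\|f^+\|_2^2,
\]
where $f^+$ is the unique function in $H^2$ with $T_{\bar b}f=T_{\bar a}f^+$.

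The plan is to test the norm equivalence on normalized reproducing kernels $k_\lambda/\|k_\lambda\|_{H^2}$ of $H^2$. For these, $\|k_\lambda\|_{\mathcal{D}(\mu)}^2$ can be computed explicitly from the local Dirichlet integral formula: up to constants it is $(1+P\mu(\lambda))/(1-|\lambda|^2)$ with $P\mu$ the Poisson integral. This is comparable to $\|k_\lambda\|_2^2(1+P\mu(\lambda))$.

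On the $\mathcal{H}(b)$ side, $T_{\bar b}k_\lambda=\overline{b(\lambda)}k_\lambda$ and $T_{\bar a}k_\lambda=\overline{a(\lambda)}k_\lambda$. Hence $k_\lambda^+=\overline{b(\lambda)/a(\lambda)}\,k_\lambda$, which gives
\[
\|k_\lambda\|_b^2=\|k_\lambda\|_2^2\bigl(1+|b(\lambda)|^2/|a(\lambda)|^2\bigr).
\]
Equivalence of norms therefore yields
\[
1+\frac{|b(\lambda)|^2}{|a(\lambda)|^2}\asymp 1+P\mu(\lambda).
\]
This alone does not force the corona condition: when $b$ and $a$ are both small, the left side is not controlled.

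To rule out points where both $|a|$ and $|b|$ are small, we use the shift structure. In $\mathcal{H}(b)$ with $b$ non-extreme, $\|Sf\|_b^2-\|f\|_b^2=|\langle f^+,\ \cdot\rangle|$-type terms. More precisely, for the Dirichlet-type behaviour, the $2$-isometry identity transported to $\mathcal{H}(b)$ through the equivalence forces $\|Sf\|_b\ge c\|f\|_b$. We also use the known identity $\|zf\|_b^2=\|f\|_b^2+|(f^+)(0)\ldots|$; one computes $(zf)^+=zf^++\overline{\cdots}$ involving $\overline{b(0)},\overline{a(0)}$.

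The cleaner route, which we will actually pursue, uses the multiplier algebra. $\mathcal{D}(\mu)$ has the property that every $\varphi\in H^\infty$ with $\varphi'$ sufficiently nice is a multiplier. In particular, the inner divisors of $b$ must be compatible with this. If $\inf(|a|+|b|)=0$, we choose $\lambda_n$ with $|a(\lambda_n)|+|b(\lambda_n)|\to0$. Since $a$ is outer and $|a|^2+|b|^2=1$ a.e., smallness of $b$ in the interior forces $b_i$ to be small at $\lambda_n$, while $|b_o|$ stays bounded below there.

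We then test with $f=b_i$-adapted functions, namely $f=k_{\lambda_n}$ divided by $\|\cdot\|$, compared against the kernel of $\mathcal{H}(b)$:
\[
k^b_\lambda=\frac{1-\overline{b(\lambda)}b}{1-\bar\lambda z},\qquad \|k^b_\lambda\|_b^2=\frac{1-|b(\lambda)|^2}{1-|\lambda|^2}.
\]
Because the spaces are equal with equivalent norms, their reproducing kernels are comparable on the diagonal:
\[
\frac{1-|b(\lambda)|^2}{1-|\lambda|^2}\asymp K^{\mu}(\lambda,\lambda).
\]
Combined with the $H^2$-kernel computation above, this gives $(1+P\mu(\lambda))^{-1}\lesssim 1-|b(\lambda)|^2$ together with $|b|^2/|a|^2\asymp P\mu$. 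At points where $|b(\lambda)|\to0$ and $|a(\lambda)|\to0$, the first relation says $P\mu(\lambda)$ is bounded below. The second relation alone is also consistent with $P\mu$ staying bounded, so the real work is the next estimate.

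\textbf{Main obstacle.} The hard step is a genuinely two-sided estimate on $K^\mu(\lambda,\lambda)(1-|\lambda|^2)$, namely showing that it is $\asymp\bigl(1+P\mu(\lambda)\bigr)^{-1}$ up to constants. With such an estimate, $1-|b|^2\asymp (1+|b|^2/|a|^2)^{-1}=|a|^2/(|a|^2+|b|^2)$. Then, at points where $|b|\to0$ and $|a|\to0$, the left side tends to $1$. This forces $|a|^2\gtrsim|a|^2+|b|^2$, i.e. $|b|\lesssim|a|$, but not their smallness. To close the argument we will combine this with the multiplicative bound $|a|^2+|b|^2\ge$ const, obtained from the analogous comparison at the kernels of $\mathcal{H}(b)$ tested in the $\mathcal{D}(\mu)$ norm. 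We expect proving a lower bound for $\|k^b_\lambda\|_{\mathcal{D}(\mu)}$ in terms of the local Dirichlet integral of $b_i$ to be the crux. Inner functions have large local Dirichlet integrals where they are small, and this is where we hope to extract the contradiction.

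\textbf{Consequence $\mathcal{H}(b)=\mathcal{H}(b_o)$.} Once the corona property holds, this is the known result that, for a corona pair, $\mathcal{H}(b)=\mathcal{H}(b_o)$ with equivalent norms.
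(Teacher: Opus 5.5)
Your Step 1 is fine; it is the same density argument the paper uses. Testing the $H^2$ kernels is also a sensible first move, but the computation is off. Since $Q_\zeta k_\lambda=\bar\lambda(1-\bar\lambda\zeta)^{-1}k_\lambda$, one gets $\|k_\lambda\|_{\mathcal{D}(\mu)}^2=(1+|\lambda|^2V_\mu(\lambda))/(1-|\lambda|^2)$. The resulting relation is therefore $1+|b/a|^2\asymp 1+V_\mu$, which is \cite[Theorem 3.3]{CR13}, not $1+P_\mu$.

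The genuine gap is in Step 2: the Corona condition is never established. Nothing in your setup shows that $|b_o(\lambda_n)|$ stays bounded below. That statement is essentially the Corona property of $(b_o,a)$, which is part of what must be proved. Your ``main obstacle'' estimate $K^\mu(\lambda,\lambda)(1-|\lambda|^2)\asymp(1+V_\mu(\lambda))^{-1}$ is false even under the hypothesis of the theorem. In Sarason's example $\mu=\delta_1$ one has $K^\mu=k^b$, and $1-|b(r)|^2\asymp 1-r$ while $(1+V_{\delta_1}(r))^{-1}\asymp(1-r)^2$ as $r\to1^-$. With $P_\mu$, as you wrote it, the estimate fails along tangential approach to $1$, where $P_{\delta_1}\to0$ while $|b|\to1$. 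Finally, your proposed closing step, a lower bound $|a|^2+|b|^2\gtrsim 1$ on $\mathbb{D}$, is equivalent to the Corona condition itself, so that step is circular.

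What does hold is the restriction of your estimate to the sublevel sets $\Omega(b,r)=\{|b|<r\}$. Test $k^b_w$ in $\mathcal{D}(\mu)$, and for $w\in\Omega(b,r)$ with $|w|\ge(1+r)/2$ bound $D_\zeta(k^b_w)$ from below by $c_r\|k^b_w\|_b^2|\zeta-w|^{-2}$. This gives $\sup_{\Omega(b,r)}V_\mu<\infty$ (Lemma~\ref{L:vmubdsublev}), hence $|b|\le L|a|$ on $\Omega(b,\delta)$ for $\delta\le\sqrt2/2$. As you noticed, this alone does not exclude points where $a$ and $b$ are both small.

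The missing idea is a maximum-principle argument that genuinely uses that $a$ is outer (Lemma~\ref{L:sufcondabcorona}). Since $a$ is outer, $h=-\log|a|$ is the Poisson integral of its boundary values. Set $M:=\log(L/\delta)$.
\begin{itemize}
\item On $\partial\Omega(b,\delta)\cap\mathbb{D}$ we have $|b|=\delta$, so $h\le M$ there.
\item On the part of $\mathbb{T}$ where $|b|\le\delta$ we have $|a|\ge\sqrt{1-\delta^2}\ge\delta$, so $h\le M$ a.e.\ there.
\end{itemize}
Let $v$ equal $(h-M)_+$ on $\Omega(b,\delta)$ and $0$ elsewhere. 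By gluing, $v$ is subharmonic on $\mathbb{D}$, satisfies $0\le v\le h$, and has radial limits $0$ a.e. Because $h(r\cdot)\to h$ in $L^1(m)$, dominated convergence shows that the circle means of $v$ tend to $0$. Hence $v\equiv 0$, that is, $|a|\ge\delta/L$ on $\Omega(b,\delta)$, while $|b|\ge\delta$ off it; this is the Corona condition.

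Once Corona is known, your last step is fine. Note only that $a$ is also the mate of $b_o$ and $|b_o|\ge|b|$, so $(b_o,a)$ is Corona as well and $\mathcal{H}(b)=\mathcal{M}(\bar a)=\mathcal{H}(b_o)$.
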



The Corona condition appears naturally in the theory of de Branges--Rovnyak spaces. In particular, when the Pythagorean pair $(b,a)$ satisfies it, the space $\mathcal{H}(b)$ has a much more accessible description: namely, $\mathcal{H}(b)$ coincides with the range of the co-analytic Toeplitz operator~$T_{\bar{a}}$. The fact that the Corona condition is \emph{necessary} for the equality $\mathcal{H}(b)=\mathcal D(\mu)$ is fundamental in this work, and it led us to our main result. 

\begin{theorem}\label{T:main2}
Consider a function $b$ in $H^\infty_1$ and a positive finite Borel measure $\mu$ on $\mathbb{T}$. Then, the de Branges--Rovnyak space $\mathcal{H}(b)$ coincides with the harmonically weighted Dirichlet space $\mathcal{D}(\mu)$ (with equivalence of norms) if and only if the two following conditions are satisfied:
\begin{enumerate}
\item[(i)] $b$ is a non-extreme function and the Pythagorean pair $(b,a)$ satisfies the Corona condition;
\item[(ii)] The potential condition
    \begin{equation}\label{suffdmueqmba}
        c\le |a(z)|^2(1+V_\mu(z))\le C,
        \qquad z\in\mathbb D,
    \end{equation}
holds, where \(c, C > 0\) are positive constants.
\end{enumerate}
\end{theorem}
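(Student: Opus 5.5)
By Theorem~\ref{T:main1}, condition (i) is necessary. So under (i) it suffices to show that $\mathcal{H}(b)=\mathcal{D}(\mu)$ if and only if \eqref{suffdmueqmba} holds. Under the Corona condition, $\mathcal{H}(b)=T_{\bar a}H^2$ with equivalent norms, by the standard result quoted after Theorem~\ref{T:main1}. Hence $\|f\|_{\mathcal H(b)}\asymp \|g\|_{H^2}$ for the unique $g$ with $f=T_{\bar a}g$. Since $a$ is outer, $T_{\bar a}$ is injective, and this uniqueness holds.

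The plan is to test both spaces on reproducing kernels and then pass to a Gram/Carleson-type comparison. The reproducing kernel of $T_{\bar a}H^2$ with the range norm is
\[
K_w(z)=\frac{\overline{a(w)}\,a(z)}{1-\bar w z}.
\]
Because the Corona condition makes $T_{\bar a}H^2=\mathcal H(b)$ with equivalent norms, this is comparable to $k^b_w$ in the sense of positive kernels. For $\mathcal D(\mu)$, the local Dirichlet integral formula (Richter--Sundberg) gives
\[
\|k_w\|^2_{\mathcal D(\mu)}=\|k_w\|_{H^2}^2\,(1+(1-|w|^2)V_\mu(w)) \cdot(\text{const}).
\]
More precisely, $\mathcal D_\zeta(k_w)=|w|^2\,|k_w(\zeta)|^2\,\|k_w\|_{H^2}^2$-type identities yield $\mathcal D_\mu(k_w)\asymp V_\mu(w)$.

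\textbf{Necessity.} Assume $\mathcal H(b)=\mathcal D(\mu)$. Evaluating the kernel on the diagonal gives $|a(w)|^2/(1-|w|^2)\asymp k^{\mu}_w(w)$, where $k^\mu$ is the kernel of $\mathcal D(\mu)$. The task is therefore to estimate $k^\mu_w(w)$ as comparable to $\bigl((1-|w|^2)(1+V_\mu(w))\bigr)^{-1}$. The upper bound $k^\mu_w(w)\le \|k_w\|^2_{H^2}/\|k_w\|^2_{\mathcal D(\mu)}\cdot$ (norm comparisons) should follow by testing against the normalized kernel $k_w/\|k_w\|$. That gives one side of \eqref{suffdmueqmba}, namely $|a|^2(1+V_\mu)\ge c$ or $\le C$ depending on direction. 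For the other direction, I would apply the same argument with the identification of $T_{\bar a}$ on $k_w$: since $T_{\bar a}k_w=\overline{a(w)}k_w$, we get $\|k_w\|^2_{\mathcal H(b)}\asymp |a(w)|^{-2}\|k_w\|^2_{H^2}$. Comparing with $\|k_w\|^2_{\mathcal D(\mu)}\asymp \|k_w\|^2_{H^2}(1+V_\mu(w))$ gives $|a(w)|^{-2}\asymp 1+V_\mu(w)$, which is exactly \eqref{suffdmueqmba}. This second route is cleaner, so I would use it for necessity.

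\textbf{Sufficiency.} This is the main obstacle: showing $T_{\bar a}H^2=\mathcal D(\mu)$ from pointwise kernel information alone. I would first prove $\mathcal D(\mu)\subset \mathcal H(b)$ via the Sarason-type criterion: $f\in\mathcal H(b)$ iff $T_{\bar b}f\in T_{\bar a}H^2$, with norm computed through $\langle f,f\rangle+\|g^+\|^2$. Alternatively, I would write the Dirichlet norm as
\[
\mathcal D_\mu(f)=\int_{\mathbb D}|f'(z)|^2 P\mu(z)\,dA(z).
\]
Then I would use the Littlewood--Paley formula for $\|g\|_{H^2}$ with $g=T_{\bar a}^{-1}f$. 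The key computation would be to compare $(\bar a g)$ on $\mathbb T$ with $f$ plus an antianalytic remainder, and to show that $|a|^2(1+V_\mu)\asymp 1$ transfers to $|a|^{-2}\asymp 1+V_\mu$ on the circle. Using $(1-|z|^2)V_\mu(z)\asymp P\mu(z)/(1-|z|^2)$, the two energy integrals become comparable. To handle the nonlocality of $T_{\bar a}$, I would use the Corona condition, which is condition (i), to solve $1=\alpha a+\beta b$ with bounded $\alpha,\beta$. This gives a bounded left inverse of $T_{\bar a}$ modulo $T_{\bar b}$, which lets me control $g$ by $f$ and $T_{\bar b}f$.

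Finally, both $\mathcal D(\mu)$ and $\mathcal H(b)$ are invariant under the shift and the backward shift, respectively, in compatible ways. Once one inclusion holds, I would upgrade it to equality by a closed-graph argument, combined with the density of polynomials and the two-sided kernel estimate obtained above.
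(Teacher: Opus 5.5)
Your necessity argument (the ``second route'') is sound. From $T_{\bar a}k_w=\overline{a(w)}\,k_w$ you get $\|k_w\|_{\mathcal M(\bar a)}=\|k_w\|_{H^2}/|a(w)|$. Since $Q_\zeta k_w=\bar w(1-\bar w\zeta)^{-1}k_w$, you also get exactly $\|k_w\|^2_{\mathcal D(\mu)}=\|k_w\|^2_{H^2}\bigl(1+|w|^2V_\mu(w)\bigr)$. Together with Theorem~\ref{T:main1}, this is equivalent to what the paper does via \eqref{dmueqhbnecco}. Your preliminary formulas are wrong, though you do not end up using them. The kernel $\overline{a(w)}a(z)/(1-\bar wz)$ belongs to $aH^2$, not to $\mathcal M(\bar a)$, whose kernel is $T_{|a|^2}k_w$. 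Also, $D_\mu(k_w)=|w|^2V_\mu(w)/(1-|w|^2)$, not $\asymp V_\mu(w)$.

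The sufficiency direction has a genuine gap. By the computation above, condition (ii) says precisely that $\|k_w\|_{\mathcal M(\bar a)}\asymp\|k_w\|_{\mathcal D(\mu)}$. So it is a test on the kernels $k_w$ only, and this test was already known to be necessary \cite{CR13}. Turning it into $\|T_{\bar a}h\|_{\mathcal D(\mu)}\asymp\|h\|_{H^2}$ for all $h$ is the entire difficulty, and nothing in your plan does this.
\begin{itemize}
\item The energy comparison is not a mechanism, because $T_{\bar a}^{-1}$ is non-local.
\item The relation $(1-|z|^2)V_\mu\asymp P_\mu/(1-|z|^2)$ is false; in fact $P_\mu=(1-|z|^2)V_\mu$.
\item A boundary relation $|a|^{-2}\asymp 1+V_\mu$ on $\mathbb T$ is void on $\operatorname{supp}\mu$, where $V_\mu$ blows up and $a$ tends to zero.
\item The Corona decomposition $1=\alpha a+\beta b$ only reproves $\mathcal H(b)=\mathcal M(\bar a)$ and says nothing about $\mathcal D(\mu)$.
\item Your closing step cannot work. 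One bounded inclusion, with polynomials dense in both spaces, gives dense range, not surjectivity. You would need the inclusion to be bounded below, which is again the statement to be proved.
\end{itemize}

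The missing idea is a device that converts the pointwise two-sided bound on $\mathbb D$ into an operator bound. The paper linearizes the potential. Take an orthonormal basis $(e_n)$ of $L^2(\mu)$, and put $F_0=a$ and $F_n(z)=za(z)\int_{\mathbb T}e_n(\xi)(1-\bar\xi z)^{-1}\,d\mu(\xi)$. By Parseval, $\sum_n|F_n|^2=|a|^2(1+|z|^2V_\mu)\asymp 1$. Rosenblum's countable Corona theorem \cite{Ro80} then gives $G\in H^\infty(\ell^2)$ with $\sum_nF_nG_n=1$. Hence $(M_F^R)^*h=(T_{\overline{F_n}}h)_n$ is bounded and bounded below on $H^2$. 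Next, with $f=T_{\bar a}h$ and $h$ a polynomial, one has $T_{\overline{F_n}}h(z)=\langle Q_zf,e_n\rangle_{L^2(\mu)}$, which yields $\sum_{n\ge1}\|T_{\overline{F_n}}h\|_{H^2}^2=D_\mu(T_{\bar a}h)$. Thus $\|T_{\bar a}h\|^2_{\mathcal D(\mu)}\asymp\|h\|^2_{H^2}$. Closed range plus density of polynomials in $\mathcal D(\mu)$ then gives $\mathcal M(\bar a)=\mathcal D(\mu)$.
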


Condition \eqref{suffdmueqmba} can equivalently be written as
\[
|a(z)|\asymp\bigl(1+V_\mu(z)\bigr)^{-1/2},
\qquad z\in\mathbb D.
\]
It therefore prescribes how the decay of the Pythagorean mate $a$ must compensate for the growth of the potential $V_\mu$. Together with the Corona condition, this estimate gives a characterization for the equality $\mathcal{H}(b)=\mathcal{D}(\mu)$, valid for arbitrary pairs \((b,\mu)\) of $H^\infty_1$-functions and finite positive Borel measures on the unit circle. 

We remark that the proof of Theorem \ref{T:main2} is not a generalization of previous proofs of other results in this line of research. It relies on the countable Corona theorem, which beautifully allows to overcome many technical obstacles. 

The rest of the paper is organized as follows. Section \ref{S:preliminaries} contains the main definitions and preliminaries. Sections \ref{S2} and \ref{S3} contain the proofs of Theorems \ref{T:main1} and \ref{T:main2}, respectively. Section \ref{S4}
collects some remarks. The remaining three sections are devoted to examples. Notably, we show that the equality $\mathcal{H}(b)=\mathcal{D}(\mu)$ can hold also for (singular) measures supported on \emph{uncountable} sets. In Section \ref{S5}, we show that if $\mu$ is in the class of $d$\emph{-Ahlfors-David regular} measures $(0<d<1)$, then $\mathcal{D}(\mu)$ coincides with a de Branges--Rovnyak space. This class of measure includes the Cantor measure, whose support is uncountable and has no atoms. In Section \ref{S6} we consider purely atomic measures, with isolated atoms, whose support may nevertheless be uncountable. Finally, in Section \ref{S7}, we consider measures with atoms at accumulation points of other atoms. In each case we show that the associated Dirichlet
space can coincide with a de Branges--Rovnyak space.

\section{Preliminaries}\label{S:preliminaries}

This paper deals with classes of subspaces of the Hardy space $H^2$. Here, we gather the main definitions. The Hardy space $H^2$ is the space of all analytic functions $f$ on $\mathbb{D}$ such that
\[
\|f\|_{H^2}^2=\sup_{0<r<1}\int_{\mathbb{T}} |f(r\zeta)|^2 \operatorname{d}\!m(\zeta) <\infty,
\]
where $m$ is the normalized Lebesgue on the unit circle $\mathbb{T}$. It is a Hilbert space that enjoys many beautiful and important properties, as a consequence of several classical results from complex analysis, harmonic analysis and operator theory. For example, it is known that every function $f$ in $H^2$ can be decomposed as the product of an \emph{inner} function and an \emph{outer} function. The outer factor $f_o$ captures the behaviour of $f$ on the boundary $\mathbb{T}$, and it admits the explicit expression
\[
f_o(z)=\exp{\left(\int_{\mathbb{T}}\frac{\zeta+z}{\zeta-z}\log|f(\zeta)|\operatorname{d}\!m(\zeta)\right)},\qquad z\in\mathbb{D}.
\]
The function $f_o$ never vanishes on $\mathbb{D}$, and $f_i:=f/f_o$ is an inner function, that is, $f_i$ is bounded and $|f_i|=1$ $m$-a.e. on $\mathbb{T}$. The inner factor $f_i$ retains information on the behaviour of $f$ on $\mathbb{D}$, e.g., the location of its zeros.

Finally, we recall the Corona theorem, a classic result by Carleson on the structure of maximal ideals of $H^\infty$. It states that, given functions $f_1,\ldots,f_N$ in $H^\infty$, then there exist functions $g_1,\ldots,g_N$ in $H^\infty$ such that $\sum_{j=1}^N f_j g_j=1$
if and only if
\[
\inf_{z\in\mathbb{D}} \sum_{j=1}^N|f_j(z)| >0.
\]
  In Section \ref{S3} we will also discuss a Corona theorem for countably many functions. For more details on Hardy spaces, a standard reference is \cite{Duren}.

\subsection{De Branges--Rovnyak spaces}
For $b$ in $H^\infty_1$, the \emph{de Branges--Rovnyak space} $\mathcal{H}(b)$ is the
reproducing kernel Hilbert space of analytic functions on $\mathbb D$ with kernel
\[
    k_w^b(z)=\frac{1-\overline{b(w)}\,b(z)}{1-\overline wz},\qquad z,w\in\mathbb D .
\]

The space $\mathcal{H}(b)$ also can be viewed as the range of the square-root operator $T=(I-T_bT_b^*)^\frac{1}{2}$, that is well-defined on $H^2$. In this case,
\[
\|Tf\|_{\mathcal{H}(b)}=\|f\|_{H^2}, \qquad f\in\operatorname{Ker}(T)^\bot.
\]
It can be proved that $\mathcal{H}(b)$ is always contained in $H^2$. When $b$ is an inner function, $\HH(b)$ is a closed subspace of $H^2$ with the $H^2$-norm, and it is called the \emph{model space}, see \cite{GMR16}.

These spaces can be very difficult to treat from a functional point of view. The structure of $\mathcal{H}(b)$ heavily depends on the properties of the function $b$, and in general de Branges--Rovnyak spaces may display very different behaviors. One nice approach to the theory of $\mathcal{H}(b)$ is through the notion of extremality. Recall that \(b\) is non-extreme if
\(
\log(1-|b|^2)\in L^1(\mathbb T,m)
\). The non-extremality of $b$ is equivalent to the fact that polynomials are dense in $\mathcal{H}(b)$. This explains why, whenever $\mathcal{H}(b)=\mathcal{D}(\mu),$ then necessarily $b$ is non-extreme.

When $b$ is non-extreme there exists a unique outer function \(a\), normalized by \(a(0)>0\), such that
\[
|a|^2+|b|^2=1
\qquad m\text{-almost everywhere on }\mathbb T.
\]
The function \(a\) is called the Pythagorean mate of \(b\), and we refer to $(b,a)$ as a Pythagorean pair.

The Corona condition appears naturally in the theory of de Branges--Rovnyak spaces. In particular, when the Pythagorean pair $(b,a)$ satisfies it, the space $\mathcal{H}(b)$ has a much more accessible description~\cite[Chapter 28]{FM16}. Indeed, in this case, $\mathcal{H}(b)$ coincides with the range of the co-analytic Toeplitz operator $T_{\bar{a}}$, that is,
\[
    \mathcal{H}(b)=\mathcal M(\bar a):=\{f\in H^2:\ f=T_{\bar a}g\ \text{for some }g\in H^2\},
\]
with equivalent norms, where
\(
    \|T_{\bar a}g\|_{\mathcal M(\bar a)}=\|g\|_{H^2}
\) (the operator $T_{\bar{a}}$ is injective, since $a$ is outer). For more details on de Branges--Rovnyak spaces, we refer to \cite{FM16, Sa94}.

\subsection{Harmonically weighted Dirichlet spaces}
 Given a finite positive Borel measure $\mu$ on $\mathbb T$, let
\[
    P_\mu(z)=\int_{\mathbb T}\frac{1-|z|^2}{|\xi-z|^2}\,\operatorname{d}\!\mu(\xi),\qquad z\in\mathbb D,
\]
be its Poisson integral. Notice that $P_\mu$ is related to the potential $V_\mu$ defined in \eqref{eq:potentialintro} by
\[
P_\mu(z)=(1-|z|^2)V_\mu(z).
\]
The \emph{harmonically weighted Dirichlet space}
$\mathcal D(\mu)$, introduced in~\cite{ Al93, Ri91}, consists of all $f\in H^2$ such that
\[
     D_\mu(f)=\int_{\mathbb D}|f'(z)|^2P_\mu(z)\operatorname{d}\!A(z)<\infty,
\]
where $\operatorname{d}\!A$ is the normalized area measure, with the norm
\[
    \|f\|_{\mathcal D(\mu)}^2=\|f\|_{H^2}^2+ D_\mu(f).
\]
In general, $\mathcal{D}(\mu)$ contains all polynomials, it is contained in $H^2$, but it is not closed in the $H^2$-norm. See \cite[Chapter 7]{EKMR14} for an exhaustive survey. 

The local Dirichlet integral $D_{\xi}(f)$ at $\xi \in \T$ is defined by
\begin{equation*}
D_{\xi}(f) = \|Q_\xi f\|_{H^2}^2, \quad \text{ where } \quad Q_\xi f(z) = \frac{f(z) - f(\xi)}{z - \xi}, \quad z\in\mathbb{D}.
\end{equation*}
It is well-known~\cite[Proposition 2.2]{RS91} that
\begin{align*}
\int_{\mathbb{D}} |f'(z)|^2 P_\mu(z) \operatorname{d}\!A(z) = \int_\T D_\xi (f) \operatorname{d}\!\mu(\xi),
\end{align*}
and for every $f \in \mathcal{D}(\mu)$, the radial value $f(\xi)$ exists $\mu$-almost everywhere, and
\begin{equation*}
\|f\|_{\mathcal{D}(\mu)}^2 = \|f\|_{H^2}^2 + \int_{\mathbb{T}} \|Q_{\xi} f\|_{H^2}^2 \operatorname{d}\!\mu(\xi).
\end{equation*}


\section{Proof of Theorem \ref{T:main1}}\label{S2}

To prove Theorem \ref{T:main1}, we need two preliminary lemmas. For $b \in H^\infty$ and $r> 0$, we consider the sublevel set 
\[
\Omega(b,r)=\{w\in\mathbb{D}\colon |b(w)|< r\}.
\]
\begin{lemma}\label{L:sufcondabcorona}
Let \((b, a)\) be a Pythagorean pair. Suppose that there exists some \(0 < \delta \le \sqrt{2}/2\) such that
\begin{equation}\label{assumpbovera}
L:=\max\left\{1, \sup_{z\in\Omega(b,\delta)}\left\vert \frac{b(z)}{a(z)}\right\vert \right\}  <\infty.
\end{equation}
Then,
\[
\inf_{z\in\Omega(b,\delta)}|a(z)| \ge \delta/L,
\]
and in particular $(b,a)$ forms a Corona pair.
\end{lemma}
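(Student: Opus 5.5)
We will show directly that $|a|\ge \delta/L$ at every point of $\Omega(b,\delta)$. The idea is a maximum principle for $1/a$ on each component of the sublevel set; the Corona property then follows immediately. Fix a point $z_0\in\Omega(b,\delta)$ and let $U$ be the connected component of $\Omega(b,\delta)$ containing $z_0$. We use the two parts of the boundary of $U$ separately.

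\emph{Interior boundary.} At a point $z\in\partial U\cap\mathbb D$ we have $|b(z)|=\delta$. By continuity, \eqref{assumpbovera} also holds on $\partial U\cap\mathbb D$, so $|b(z)|\le L|a(z)|$ and hence
\[
|a(z)|\ge \delta/L .
\]

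\emph{Boundary on $\mathbb T$.} Suppose that for some $\zeta$ the radius ending at $\zeta$ meets $U$ at points arbitrarily close to $\zeta$, and that $b$ has a radial limit at $\zeta$. Then $|b(\zeta)|\le\delta$, so
\[
|a(\zeta)|^2=1-|b(\zeta)|^2\ge 1-\delta^2\ge \tfrac12\ge\delta^2\ge(\delta/L)^2 ,
\]
using $\delta\le\sqrt2/2$ and $L\ge1$. This is precisely why the hypotheses impose $\delta\le\sqrt2/2$ and $L\ge 1$.

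To pass from these boundary bounds to the interior, set $u=-\log|a|$ and $M=\log(L/\delta)$. Since $a$ is outer, $u$ is harmonic on $\mathbb D$ and equals the Poisson integral of $u^*=-\log|a|\in L^1(\mathbb T)$ on the circle. Define $w=(u-M)^+$ on $U$ and $w=0$ on $\mathbb D\setminus U$. Because $u\le M$ on $\partial U\cap\mathbb D$, the function $w$ is subharmonic on $\mathbb D$; this is the usual gluing of $\max(u-M,0)$ with $0$ across a level where $u-M\le0$. Moreover $w\le (u-M)^+\le P[(u^*-M)^+]$ by convexity of $t\mapsto t^+$.

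The main obstacle is that $U$ may touch $\mathbb T$ on a large set and $1/a$ is unbounded, so the naive maximum principle does not apply. We handle this through the boundary behaviour of $w$, arguing as follows.
\begin{enumerate}
\item[(a)] At $m$-a.e.\ $\zeta\in\mathbb T$, both $a$ and $b$ have radial limits. If the radius meets $U$ arbitrarily close to $\zeta$, then $|b(\zeta)|\le\delta$ and so $u^*(\zeta)\le M$ by the bound above. Otherwise $w$ vanishes near $\zeta$ along the radius. In either case $w(r\zeta)\to0$ as $r\to1$ for a.e.\ $\zeta$.
\item[(b)] The functions $w(r\,\cdot)$ are dominated by $P[(u^*-M)^+](r\,\cdot)$, and Poisson integrals of a fixed $L^1$ function are uniformly integrable in $r$. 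By Vitali's theorem, $\int_{\mathbb T} w(r\zeta)\,dm(\zeta)\to0$.
\item[(c)] Since $w$ is subharmonic and nonnegative, $w(z)\le \int P_z(\zeta)\,w(r\zeta)\,dm(\zeta)$ for $|z|<r$ (after rescaling). Letting $r\to1$ and using (b) gives $w\equiv0$.
\end{enumerate}
Consequently $u\le M$ on $U$, that is, $|a(z_0)|\ge\delta/L$. As $z_0$ was arbitrary, this gives $\inf_{\Omega(b,\delta)}|a|\ge\delta/L$.

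Finally, for the Corona property: if $z\notin\Omega(b,\delta)$ then $|b(z)|\ge\delta$, while if $z\in\Omega(b,\delta)$ then $|a(z)|\ge\delta/L$. Hence
\[
|a(z)|+|b(z)|\ge\delta/L \quad\text{for all } z\in\mathbb D,
\]
so $(b,a)$ is a Corona pair.
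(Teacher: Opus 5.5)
Your proof is correct and follows essentially the same argument as the paper. Both use the subharmonic function obtained by gluing $(-\log|a|-M)^+$ with $0$ across $\partial\Omega(b,\delta)\cap\mathbb{D}$, show its radial limits vanish a.e.\ using $\delta\le\sqrt{2}/2$, and conclude with the subharmonic Poisson inequality. The only differences are cosmetic: you restrict to a component of $\Omega(b,\delta)$, and you get $\int_{\mathbb T} w(r\zeta)\,dm(\zeta)\to 0$ from uniform integrability of Poisson integrals (Vitali), whereas the paper uses $h(r\cdot)\to h$ in $L^1$ plus dominated convergence.
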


\begin{proof}
Set 
\(
h(z) = -\log|a(z)| \ge 0,\) and \(M = \log(L/\delta)\geq 0.
\)
Since \(a\) is in $H^\infty$ and outer, $h$ belongs to $L^1(\mathbb{T},m)$ and
\begin{equation*}
h(z) = P[h\vert_{\mathbb{T}}](z) = \int_\T \frac{1-|z|^2}{|\xi - z|^2} \,h(\xi) \operatorname{d}\!m(\xi), \qquad z \in \D.
\end{equation*}
At each point  \(z\in\partial\Omega(b,\delta) \cap \mathbb{D}\), by continuity we have that $|b(z)|=\delta$ and then by \eqref{assumpbovera} we also have that \(|a(z)| \ge \delta/L\). In particular, \(h(z) \le M\) on $\partial\Omega(b,\delta) \cap \mathbb{D}$. The gluing lemma for subharmonic functions~\cite[Theorem 2.4.5]{Ra95} shows that
\[
v(z) = \begin{cases}
\max\{h(z) - M, 0\}, & z \in \Omega(b,\delta), \\
0, & z \in \mathbb{D} \setminus \Omega(b,\delta),
\end{cases}
\]
is subharmonic on \(\mathbb{D}\), and it satisfies \(0 \le v(z) \le h(z), z\in\mathbb{D}\).

We claim that \(v(r\xi) \to 0\) for \(m\)-almost every \(\xi \in \mathbb{T}\). If \(|b(\xi)| > \delta\), the points \(r\xi\) eventually lie outside \(\Omega\).
If \(|b(\xi)| \le \delta\), then since $0\leq\delta\leq\sqrt{2}/2,$
\[
|a(\xi)| \ge \sqrt{1 - \delta^2} \ge \delta \ge\frac{\delta}{L},
\]
so \(h(\xi) \le M\), which again implies \(v(r\xi) \to 0\). We want to prove that
\begin{equation}\label{eq:intvrto0}
\lim_{r\to 1}\int_{\mathbb{T}} v(r\xi)\operatorname{d}\!m(\xi) = 0.    
\end{equation}
We do the following trick. Since $h \in L^1$, the radial dilations \((h(r\cdot))_{0<r<1}\) converge to \(h\) in \(L^1\) as $r\to 1$. Since $0\leq v\leq h$ on the unit disk,
\[
v(r\xi)\leq h(r\xi)-h(\xi)+h(\xi)\leq h(\xi)+|h(r\xi)-h(\xi)|.
\]
In particular, it holds the pointwise estimate
\[
v(r\xi)\leq \min\{v(r\xi), h(\xi)\}+ |h(r\xi)-h(\xi)|, \qquad \text{a.e.}\,\,\xi\in\mathbb{T}.
\]
Then,
\[
0\leq \int_{\mathbb{T}} v(r\xi)\operatorname{d}\!m(\xi) \leq \int_{\mathbb{T}} \min\{v(r\xi),h(\xi)\}\operatorname{d}\!m(\xi)+\|h(r\cdot)-h\|_{L^1}.
\]
Since $\min\{v(r\xi),h(\xi)\}\leq v(r\xi)\to 0$ and $\min\{v(r\xi),h(\xi)\}\leq h(\xi)$ almost everywhere, and $h\in L^1,$ by dominated convergence we have that
\[
\lim_{r\to 1}\int_{\mathbb{T}} \min\{v(r\xi),h(\xi)\}\operatorname{d}\!m(\xi)=0.
\]
Now, \eqref{eq:intvrto0} follows from the convergence $h(r\cdot)\to h$ in $L^1$.

For fixed \(z \in \mathbb{D}\) and \(|z| < r < 1\), the subharmonic Poisson inequality~\cite[Theorem 2.4.1]{Ra95} gives
\[
0 \le v(z) \le \int_{\mathbb{T}} \frac{r^2 - |z|^2}{|r\xi - z|^2} \, v(r\xi) \operatorname{d}\!m(\xi) \le \frac{r + |z|}{r - |z|} \int_{\mathbb{T}} v(r\xi)  \operatorname{d}\!m(\xi).
\]
Letting $r \to 1$ proves $v \equiv 0$. Hence $h \le M$ on $\Omega(b,\delta)$, which is equivalent to
\[
|a(z)|\geq\frac{\delta}{L}, \qquad z\in\Omega(b,\delta). 
\]
Outside $\Omega(b,\delta)$ we have that $|b| \ge \delta$. The two conditions together implies the Corona condition
\[
\inf_\mathbb{D} |b|+|a|>0. \qedhere
\]
\end{proof}

A refinement of the argument used in~\cite[Theorem 3.3]{DPT25} and in \cite[Theorem 1.4]{BD25} give the following result. 

\begin{lemma}\label{L:vmubdsublev}
Consider $b\in H^\infty_1$ (not necessarily non-extreme). If $\mathcal{H}(b) \subseteq \mathcal{D}(\mu)$, then the potential $V_\mu$ is bounded on every (non-trivial) sublevel set of $b$, that is, for every $r<1$
\[
\sup_{\Omega(b,r)} V_\mu< \infty.
\]
\end{lemma}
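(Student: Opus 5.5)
The plan is to test the inclusion on reproducing kernels of $\mathcal{H}(b)$ and estimate the local Dirichlet integrals of these kernels from below by $V_\mu$.

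\textbf{Step 1: boundedness of the inclusion.} Since $\mathcal{H}(b)\subseteq\mathcal{D}(\mu)$ and both spaces embed continuously in $H^2$, the closed graph theorem makes the inclusion bounded. So there is $K>0$ with $\|f\|_{\mathcal D(\mu)}\le K\|f\|_{\mathcal H(b)}$ for all $f\in\mathcal H(b)$.

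\textbf{Step 2: apply this to $f=k_w^b$.} We have $\|k_w^b\|_{\mathcal H(b)}^2=(1-|b(w)|^2)/(1-|w|^2)$. On the other side we need a lower bound for
\[
\int_{\mathbb T} D_\xi(k_w^b)\,\operatorname{d}\!\mu(\xi).
\]
Write $k_w^b=k_w-\overline{b(w)}\,b\,k_w$, where $k_w(z)=(1-\overline w z)^{-1}$. For the Szeg\H{o} kernel itself, $Q_\xi k_w(z)=\overline w\,k_w(z)/(1-\overline w\xi)$, so
\[
D_\xi(k_w)=\frac{|w|^2}{(1-|w|^2)\,|1-\overline w\xi|^2}.
\]
Integrating in $\mu$ gives $|w|^2V_\mu(w)/(1-|w|^2)$.

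\textbf{Step 3: control the perturbation.} The main obstacle is the term $\overline{b(w)}\,b\,k_w$. We cannot simply subtract it, since $b k_w$ need not lie in $\mathcal D(\mu)$ separately. To avoid this, I would compute $D_\xi$ through $Q_\xi$ as a difference quotient. Use $\|Q_\xi f\|_{H^2}\ge |\langle Q_\xi f, k_w\rangle|/\|k_w\|_{H^2}=\sqrt{1-|w|^2}\,|(Q_\xi f)(w)|$, valid whenever $f(\xi)$ exists. For $f=k^b_w$ we have $f(w)=(1-|b(w)|^2)/(1-|w|^2)$. At $\mu$-a.e.\ $\xi$ the radial limit $f(\xi)$ exists, and
\[
|f(\xi)|\le \frac{1+|b(w)|}{|1-\overline w\xi|}\le \frac{2}{1-|w|}.
\]
Pointwise evaluation only gives the crude bound $(Q_\xi f)(w)=(f(w)-f(\xi))/(w-\xi)$, which is not good enough. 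So I would instead use the reproducing kernel of $\mathcal D(\mu)$, or rather test $Q_\xi f$ against $k_w$, directly by a convexity argument. The expected outcome of this step is
\[
D_\xi(k_w^b)\gtrsim \frac{(1-|b(w)|)^2}{(1-|w|^2)\,|1-\overline w\xi|^2}\quad\text{for }w\text{ near }\mathbb T,
\]
using $\bigl|1-\overline{b(w)}b(\xi)\bigr|\ge 1-|b(w)|$. The idea is to split $\mathbb T$ into the arc $|\xi-w/|w||\lesssim 1-|w|$ and its complement, and compare with the Szeg\H{o} case. This is the step where the argument of \cite{DPT25,BD25} must be refined, and I would follow it.

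\textbf{Step 4: conclude.} Combining Steps 1 to 3 gives
\[
(1-r)^2\,|w|^2\,V_\mu(w)\lesssim K^2\,\bigl(1-|b(w)|^2\bigr)\le K^2
\]
for all $w\in\Omega(b,r)$ with $|w|\ge 1/2$. Hence $\sup_{\Omega(b,r)\cap\{|w|\geq 1/2\}}V_\mu<\infty$. On $|w|\le 1/2$, $V_\mu\le 4\mu(\mathbb T)$ trivially, which completes the proof.
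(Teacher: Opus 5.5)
\textbf{There is a genuine gap in Step 3.} That step carries the entire content of the lemma, and it is not carried out: the ``convexity argument'' and the splitting of $\T$ into arcs are only announced. Moreover, the inequality you expect to obtain,
\[
D_\xi(k_w^b)\gtrsim \frac{(1-|b(w)|)^2}{(1-|w|^2)\,|1-\overline w\xi|^2},
\]
is false as stated. For $b(z)=z$ one has $k_w^b\equiv1$, so $D_\xi(k_w^b)=0$ for every $w$, while the right-hand side is positive. Any valid lower bound must therefore degenerate as $|b(w)|\to|w|$.

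For the same reason your final split at $|w|=1/2$ fails. For $b(z)=z$ and $r>1/2$, the annulus $\{1/2\le|w|<r\}$ lies in $\Omega(b,r)$, and there the kernels $k_w^b$ give no information at all. Comparing with the Szeg\H{o} kernel on arcs cannot repair this, since $\overline{b(w)}\,b\,k_w$ may cancel $k_w$ entirely.

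\textbf{The missing idea.} The test you discarded already works. What is too crude is the triangle inequality, not the point evaluation. Put $f=k_w^b$ and $\beta=b(w)$, and let $\lambda$ be the radial limit of $b$ at $\xi$. When $\beta\neq0$ it exists, with $|\lambda|\le1$, wherever $f(\xi)$ does. Then
\[
(1-|w|^2)\bigl(f(w)-f(\xi)\bigr)=(1-|\beta|^2)-(1-\overline\beta\lambda)\,\frac{1-|w|^2}{1-\overline w\xi}.
\]
For $\xi\in\T$ the quotient $\frac{1-|w|^2}{1-\overline w\xi}$ lies on the circle $\{|\,\cdot\,-1|=|w|\}$. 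On the other hand, $\frac{1-|\beta|^2}{1-\overline\beta\lambda}$ lies in the closed disc $\{|\,\cdot\,-1|\le|\beta|\}$. Factoring out $1-\overline\beta\lambda$ gives
\[
(1-|w|^2)\,|f(w)-f(\xi)|\ge(1-|\beta|)(|w|-|\beta|)\qquad\text{whenever } |w|>|\beta|.
\]
Combine this with $D_\xi(f)\ge(1-|w|^2)\,|f(w)-f(\xi)|^2/|w-\xi|^2$. This yields
\[
D_\xi(k_w^b)\ge\frac{(1-|b(w)|)^2\,(|w|-|b(w)|)^2}{(1-|w|^2)\,|\xi-w|^2},
\]
which is exactly the estimate the paper takes from \cite{DPT25}.

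\textbf{Concluding.} Integrate in $\mu$ and use $\|k_w^b\|^2_{\mathcal H(b)}=(1-|b(w)|^2)/(1-|w|^2)$. This gives $V_\mu(w)\le 8K^2/(1-r)^3$ on $\Omega(b,r)\cap\{|w|\ge(1+r)/2\}$, where $|w|-|b(w)|\ge(1-r)/2$. On $\{|w|<(1+r)/2\}$ the bound $V_\mu\le 4\mu(\T)/(1-r)^2$ is trivial. This is the paper's proof.
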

\begin{proof}
Choose $C > 0$ so that
\[
D_\mu(f) \leq C \|f\|_{\mathcal{H}(b)}^2,\qquad f\in\mathcal{H}(b).
\]
Consider the partition
\begin{align*}
    \Omega(b,r)&=\{w\in\mathbb{D}\colon |w|< \frac{r+1}{2},|b(w)|< r\}\cup \{w\in\mathbb{D}\colon |w|\geq \frac{r+1}{2},|b(w)|< r\}\\
    &=:\Omega_1 \cup \Omega_2.
\end{align*}
Since $\Omega_1\subseteq B(0,\tfrac{r+1}{2})$, one can easily check that $\sup_{\Omega_1}V_\mu<\infty,$ for every $r<1$.

Take $w\in \Omega_2$.
Using the same argument as in the proof of~\cite[Theorem 3.3]{DPT25}, for every kernel $k_w^b,w\in\mathbb{D},$
\begin{align*}
    C\|k_{w}^b\|_{\mathcal{H}(b)}^2&\geq    D_\mu(k_{w}^b)=  \int_{\mathbb{T}} D_\zeta(k_{w}^b) \,  \operatorname{d}\!\mu(\zeta) \\
    &\geq   \int_{\mathbb{T}} \|k_{w}^b\|_{\mathcal{H}(b)}^2 \frac{\big(1-|b(w)|\big)^2 \big(|w| - |b(w)|\big)^2}{|\zeta-w|^2 \big(1-|b(w)|^2\big)} \, \operatorname{d}\!\mu(\zeta) \\
    &=   \|k_{w}^b\|^2_{\mathcal{H}(b)} \frac{\big(1-|b(w)|\big)\big(|w| - |b(w)|\big)^2}{1+|b(w)|} \int_{\mathbb{T}} \frac{1}{|\zeta -w|^2}  \, \operatorname{d}\!\mu(\zeta) .
\end{align*}
In particular, for $w\in\Omega_2$,
\begin{align*}
    C&\geq \frac{\big(1-|b(w)|\big)\big(|w| - |b(w)|\big)^2}{1+|b(w)|} V_\mu(w)\\
    &\geq \frac{\big(1-r\big)\big(\tfrac{r+1}{2} - r\big)^2}{2} V_\mu(w) = \frac{(1-r)^3}{8} V_\mu(w).
\end{align*}
It follows that
\[
\sup_{w\in\Omega_2}V_\mu(w) \leq \frac{8C}{(1-r)^3}<\infty,
\]
and the proof is complete.
\end{proof}

We come to the main proof.
\begin{proof}[Proof of Theorem \ref{T:main1}] By~\cite[Theorem 3.3]{CR13}, we have that
\begin{align}\label{dmueqhbnecco}
1 + V_\mu(z) \asymp 1 + \left\vert \frac{b(z)}{a(z)}\right\vert^2,\qquad z\in\mathbb{D}.
\end{align}
For fixed $r< \sqrt{2}/2$, Lemma \ref{L:vmubdsublev} shows that $V_\mu$ is uniformly bounded on the sublevel set $\Omega(b,r)$. It follows from (\ref{dmueqhbnecco}) that
\[
L:=\max\left\{1,\sup_{\Omega(b,r)}\left\vert \frac{b}{a}\right\vert\right\}<\infty,
\]
and then by Lemma \ref{L:sufcondabcorona} $(b,a)$ forms a Corona pair. To conclude, we prove that $\mathcal{H}(b)=\HH(b_o)$. Consider the inner-outer factorization $b=b_ib_o$. Then,
\[
|b_o(z)|+|a(z)| \geq |b(z)|+|a(z)|, \qquad z\in\mathbb{D},
\]
hence $(b_o,a)$ is a Corona pair as well. In particular, $\mathcal{H}(b)=\mathcal{M}(\bar{a})=\HH(b_o)$, and the proof is complete. 

\end{proof}

\subsection{Another proof for the equality $\mathcal{H}(b)=\HH(b_o)$}

We have proved that, if $\mathcal{H}(b)=\mathcal{D}(\mu)$, then $(b,a)$ forms a Corona pair, and in particular the de Branges--Rovnyak space $\mathcal{H}(b)$ only depends on the outer factor. We give another proof of this fact, that does not use the Corona property, but uses the theory of model spaces. 

\begin{lemma}\label{innerdmu}
Let $u$ be an inner function belonging to $\mathcal{D}(\mu)$. Set the model space
$
K_u = H^2 \ominus uH^2
$~\cite{GMR16}.
For $f\in K_u$, let 
\[
C_u f(\zeta) = u(\zeta)\overline{\zeta f(\zeta)} \qquad \text{ for almost every } \zeta\in \mathbb{T}
\]
be the canonical conjugation~\cite{GP06, GP07}.
If $f,C_u f\in \mathcal{D}(\mu)$, then $uf\in \mathcal{D}(\mu)$ and
\[
D_\mu(uf) \le 8\big(D_\mu(f) + D_\mu(z C_u f)\big).
\]
\end{lemma}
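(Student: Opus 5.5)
The plan is to work pointwise in $\xi\in\mathbb{T}$ with the local Dirichlet integrals and then integrate against $\mu$, using the formula $D_\mu(h)=\int_{\mathbb{T}} D_\xi(h)\operatorname{d}\!\mu(\xi)$ recalled in Section~\ref{S:preliminaries}. Since $u,f,zC_uf\in\mathcal{D}(\mu)$, for $\mu$-almost every $\xi$ we have $D_\xi(u),D_\xi(f),D_\xi(zC_uf)<\infty$. For such $\xi$ the boundary values $u(\xi)$, $f(\xi)$, $(zC_uf)(\xi)$ exist, and $Q_\xi$ of each of these functions is in $H^2$. I fix such a $\xi$ and set $g=zC_uf$. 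On $\mathbb{T}$ we have $g=u\bar f$ a.e., and I expect the boundary value at $\xi$ to satisfy $g(\xi)=u(\xi)\overline{f(\xi)}$. This is a point to check, presumably via the fact that finiteness of $D_\xi$ gives the relevant radial/oricyclic limits, as in \cite{RS91}.

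The first step is the Leibniz-type splitting
\[
Q_\xi(uf)=u\,Q_\xi f+f(\xi)\,Q_\xi u .
\]
Since $|u|\le 1$, this gives
\[
D_\xi(uf)\le 2D_\xi(f)+2|f(\xi)|^2D_\xi(u).
\]
The key step, and the main obstacle, is to control the term $|f(\xi)|^2D_\xi(u)$ by $D_\xi(f)$ and $D_\xi(g)$. Here the conjugation enters. On $\mathbb{T}$ we write
\[
u\bar f-u(\xi)\overline{f(\xi)}=\overline{f(\xi)}\,(u-u(\xi))+u\,\overline{(f-f(\xi))}.
\]
Dividing by $\zeta-\xi$ then gives, a.e.\ on $\mathbb{T}$,
\[
\overline{f(\xi)}\,Q_\xi u(\zeta)=Q_\xi g(\zeta)-u(\zeta)\,\frac{\overline{f(\zeta)-f(\xi)}}{\zeta-\xi}.
\]
The last term has modulus $|Q_\xi f(\zeta)|$, because $|u|=1$ and $|\overline{\zeta-\xi}|=|\zeta-\xi|$. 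Since the $H^2$-norm is the $L^2(\mathbb{T})$-norm of the boundary values, taking $L^2$ norms yields
\[
|f(\xi)|^2D_\xi(u)\le 2\big(D_\xi(g)+D_\xi(f)\big).
\]
The delicate points in this step are as follows. One must identify the boundary function $(g-g(\xi))/(\zeta-\xi)$ with the boundary values of $Q_\xi g$, which is fine since $Q_\xi g\in H^2$. One also needs the value identity $g(\xi)=u(\xi)\overline{f(\xi)}$ mentioned above.

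Combining the two estimates gives
\[
D_\xi(uf)\le 2D_\xi(f)+4D_\xi(g)+4D_\xi(f)\le 8\big(D_\xi(f)+D_\xi(zC_uf)\big)
\]
for $\mu$-a.e.\ $\xi$. Integrating in $\mu$ gives the claimed inequality $D_\mu(uf)\le 8\big(D_\mu(f)+D_\mu(zC_uf)\big)$. Because $uf\in H^2$ trivially, it follows that $uf\in\mathcal{D}(\mu)$.

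The hypothesis is stated as $C_uf\in\mathcal{D}(\mu)$, while the estimate uses $zC_uf$. The gap is harmless, since multiplication by $z$ preserves $\mathcal{D}(\mu)$; explicitly, $Q_\xi(zh)=h+\xi Q_\xi h$, so $D_\xi(zh)\le 2\|h\|_{H^2}^2+2D_\xi(h)$.
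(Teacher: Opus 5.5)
Your proof is correct and follows the paper's argument. You use the same splitting $uf-u(\xi)f(\xi)=u\,(f-f(\xi))+f(\xi)\,(u-u(\xi))$ and the same use of $zC_uf=u\bar f$ on $\mathbb{T}$ to bound $|f(\xi)|\,|u-u(\xi)|$ by the increments of $f$ and $zC_uf$; you end with $6D_\xi(f)+4D_\xi(zC_uf)$, the paper with $8D_\xi(f)+2D_\xi(zC_uf)$.

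The value identity you flag is used by the paper without comment, and it is immediate; it does not come from multiplying radial limits, since $zC_uf\neq u\bar f$ inside $\mathbb{D}$. Your decomposition already shows $(u\bar f-u(\xi)\overline{f(\xi)})/(\zeta-\xi)\in L^2(m)$. Subtracting the boundary values of $Q_\xi(zC_uf)\in H^2$ gives $\bigl((zC_uf)(\xi)-u(\xi)\overline{f(\xi)}\bigr)/(\zeta-\xi)\in L^2(m)$, which forces this constant to be $0$ because $\int_{\mathbb{T}}|\zeta-\xi|^{-2}\operatorname{d}\!m(\zeta)=\infty$.
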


\begin{proof}
For $m$-almost every $\lambda,\zeta\in\mathbb{T}$, we have that
\begin{align*}
|u(\zeta)f(\zeta) - u(\lambda)f(\lambda)| &= |u(\zeta)f(\zeta) - u(\zeta) f(\lambda) +u(\zeta) f(\lambda) - u(\lambda)f(\lambda)|\\
 &\le |f(\zeta) - f(\lambda)| + | f(\lambda)(u(\zeta)-u(\lambda))|.
\end{align*}
Now, since $\zeta C_uf(\zeta)=u(\zeta)\overline{f(\zeta)}$ for $m$-almost every $\zeta \in \mathbb{T}$,
\begin{align*}
   \overline{f(\lambda)} (u(\lambda)-u(\zeta))  &= \lambda C_u f(\lambda) - \zeta C_u f(\zeta) + u(\zeta)(\overline{f(\zeta)}-\overline{f(\lambda)}),
\end{align*}
and we have that
\begin{equation*}
    | f(\lambda)(u(\zeta)-u(\lambda))|\leq |\lambda C_u f(\lambda) - \zeta C_u f(\zeta)| + |f(\zeta) - f(\lambda)|.
\end{equation*}
We found that
\begin{align*}
|u(\zeta)f(\zeta) - u(\lambda)f(\lambda)| 
 &\le 2|f(\zeta) - f(\lambda)| + |\lambda C_u f(\lambda) - \zeta C_u f(\zeta)|.
\end{align*}
Thus,
\begin{align*}
    D_{\lambda}(uf)=\int_{\mathbb{T}}\left|\frac{u(\zeta)f(\zeta) - u(\lambda)f(\lambda)}{\zeta-\lambda}\right|^2\operatorname{d}\!m(\zeta)\leq 8D_\lambda(f)+2D_\lambda(zC_uf),
\end{align*}
and we conclude that 
\[
D_\mu(uf) \le 8D_\mu(f) + 2D_\mu(zC_u f)<\infty. \qedhere
\]

\end{proof}

We can give a second proof of the following result:
\begin{theorem}\label{dmuhbbouter}
Let $b \in H^\infty_1$ and let
$
b = b_i b_o
$
be its inner-outer factorization. If $\mathcal{D}(\mu) = \mathcal{H}(b)$, then $\mathcal{D}(\mu) = \mathcal{H}(b_o)$.
\end{theorem}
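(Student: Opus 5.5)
The plan is to use the model-space Lemma \ref{innerdmu} together with the classical relation between $\mathcal{H}(b)$ and $\mathcal{H}(b_o)$, and to avoid the Corona property altogether. Recall the standard facts (see \cite{FM16, Sa94}):
\[
\mathcal{H}(b)=\mathcal{H}(b_o)\oplus' \text{(something)}, \qquad \mathcal{H}(b_o)\subseteq\mathcal{H}(b) \text{ contractively},
\]
and more precisely, since $b=b_ib_o$,
\[
\mathcal{H}(b)=\mathcal{H}(b_i)+b_i\mathcal{H}(b_o)=K_{b_i}+b_i\mathcal{H}(b_o).
\]
Also $\mathcal{H}(b_o)\subseteq\mathcal{H}(b)$, because $k^{b}_w-k^{b_o}_w$ is a positive kernel: $|b_o|\ge|b|$ gives $1-\overline{b(w)}b(z)$ dominating as kernels. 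More directly, $I-T_bT_b^*\ge I-T_{b_o}T_{b_o}^*$, since $T_bT_b^*=T_{b_o}T_{b_i}T_{b_i}^*T_{b_o}^*\le T_{b_o}T_{b_o}^*$. Hence one inclusion is immediate: $\mathcal{H}(b_o)\subseteq\mathcal{H}(b)=\mathcal{D}(\mu)$. It therefore remains to prove $\mathcal{D}(\mu)\subseteq\mathcal{H}(b_o)$. By the closed graph theorem, equivalence of norms then follows automatically.

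For the reverse inclusion, I would first show that $b_i$ is a multiplier-type object acting well on $\mathcal{D}(\mu)$. The key step is to prove that $b_i\in\mathcal{D}(\mu)$ and that $b_i K_{b_i}\subseteq \mathcal{D}(\mu)$, using Lemma \ref{innerdmu}. We have $K_{b_i}\subseteq\mathcal{H}(b)=\mathcal{D}(\mu)$. Moreover, $C_{b_i}$ maps $K_{b_i}$ onto itself, so for $f\in K_{b_i}$ both $f$ and $C_{b_i}f$ lie in $\mathcal{D}(\mu)$. Since $\mathcal{D}(\mu)$ is invariant under multiplication by $z$, also $zC_{b_i}f\in\mathcal{D}(\mu)$. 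Lemma \ref{innerdmu} then gives $b_if\in\mathcal{D}(\mu)$. Taking $f=k_0^{b_i}=1-\overline{b_i(0)}b_i$ yields $b_i\in\mathcal{D}(\mu)$ when $|b_i(0)|<1$, which is automatic for a nonconstant inner function. Hence $K_{b_i^2}=K_{b_i}\oplus b_iK_{b_i}\subseteq\mathcal{D}(\mu)$. Iterating this argument gives $K_{b_i^n}\subseteq\mathcal{D}(\mu)$ for every $n$, with bounds that must be tracked.

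Next I would reduce to the non-extreme setting. Since polynomials are dense in $\mathcal{D}(\mu)=\mathcal{H}(b)$, the symbol $b$ is non-extreme, and therefore so is $b_o$, with the same mate $a$ because $|b_o|=|b|$ on $\mathbb{T}$. In the non-extreme case one has the decomposition
\[
\mathcal{H}(b)=\mathcal{H}(b_o)+ \text{a finite-codimension-type piece}
\]
only when $b_i$ is a finite Blaschke product, so the general case needs more. My plan is to exploit $\mathcal{H}(b)=\mathcal{D}(\mu)$ being invariant under $S$, with $\mathcal{D}(\mu)$ shift-invariant. I would also use the known criterion (Sarason) that $\mathcal{H}(b)$ for non-extreme $b$ equals $\mathcal{H}(b_o)$ if and only if $K_{b_i}\subseteq\mathcal{H}(b_o)$. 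Equivalently, one needs $b_i\mathcal{H}(b_o)\subseteq\mathcal{H}(b_o)$ together with an invertibility-type property of $T_{\bar a}$-ranges. The concrete target is $\mathcal{D}(\mu)\subseteq\mathcal{M}(\bar a)$, or directly that $K_{b_i}\subseteq\mathcal{H}(b_o)$.

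The main obstacle is this last step: turning ``$b_i K_{b_i}\subseteq\mathcal{D}(\mu)$ with quantitative bound'' into ``$K_{b_i}\subseteq\mathcal{H}(b_o)$''. What I would try first is a norm estimate. From Lemma \ref{innerdmu}, the multiplication $f\mapsto b_if$ is bounded from $K_{b_i}$ (with the $\mathcal{D}(\mu)$-norm) into $\mathcal{D}(\mu)$. I would combine this with the fact that $\mathcal{H}(b)\ominus b_i\mathcal{H}(b_o)$-type decompositions are controlled by $\|b_i f\|_{\mathcal{H}(b)}$ versus $\|f\|_{\mathcal{H}(b_o)}$: for $g\in\mathcal{H}(b_o)$ one has $\|b_ig\|_{\mathcal{H}(b)}\le\|g\|_{\mathcal{H}(b_o)}$. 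If I can show that multiplication by $b_i$ is bounded below and onto in a suitable sense on $\mathcal{D}(\mu)$, the inclusion follows. Concretely, I would show that the local Dirichlet integrals satisfy $D_\zeta(b_i)<\infty$ $\mu$-a.e. with $|b_i(\zeta)|=1$ there, so that $b_i$ is locally invertible. Failing that, I would fall back on the first proof's route via the Corona pair.
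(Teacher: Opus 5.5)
Up to the inclusion $b_iK_{b_i}\subseteq\mathcal{D}(\mu)=\mathcal{H}(b)$ you follow the paper's route, with one repair needed. Lemma~\ref{innerdmu} presupposes $b_i\in\mathcal{D}(\mu)$, and your $k_0^{b_i}$ trick only gives this when $b_i(0)\neq0$. Use instead $b_i=b_i(0)+zS^*b_i$ with $S^*b_i\in K_{b_i}$, or, as the paper does, $b_i\in b_i\mathcal{H}(b_o)$ because $1\in\mathcal{H}(b_o)$.

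The genuine gap is the final step, which you flag yourself and never close: nothing in the proposal converts $b_iK_{b_i}\subseteq\mathcal{H}(b)$ into $\mathcal{D}(\mu)\subseteq\mathcal{H}(b_o)$. The paper closes it by invoking \cite[Theorem 5.4(iii)]{CR13}: under $\mathcal{H}(b)=\mathcal{D}(\mu)$, one has $\mathcal{H}(b)=\mathcal{H}(b_o)$ if and only if $b_iK_{b_i}\subseteq\mathcal{H}(b)$. Your candidate substitutes do not do this job:
\begin{itemize}
\item multiplication by $b_i$ cannot be onto, since its range lies in $b_iH^2$, which meets $K_{b_i}$ only in $0$;
\item finiteness of $D_\zeta(b_i)$ for $\mu$-a.e.\ $\zeta$ is just a consequence of $b_i\in\mathcal{D}(\mu)$, and gives no bound of the type $\int_{\mathbb{T}}|F(\zeta)|^2D_\zeta(b_i)\,\operatorname{d}\!\mu(\zeta)\lesssim\|F\|_{\mathcal{D}(\mu)}^2$;
\item the iteration $K_{b_i^n}\subseteq\mathcal{D}(\mu)$ plays no role.
\end{itemize}
Falling back on Theorem~\ref{T:main1} would prove the statement, but then none of your argument is used.

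The step you single out as the obstacle, $K_{b_i}\subseteq\mathcal{H}(b_o)$, is actually elementary. If $f\in K_{b_i}$ and $b_if=g+b_ih$ with $g\in K_{b_i}$, $h\in\mathcal{H}(b_o)$, then $g=b_i(f-h)\in K_{b_i}\cap b_iH^2=\{0\}$, so $f=h$.

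However, the criterion you attribute to Sarason cannot simply be invoked. The kernel identity $k^b_w=k^{b_o}_w+\overline{b_o(w)}\,b_o\,k^{b_i}_w$ gives $\mathcal{H}(b)=\mathcal{H}(b_o)+b_oK_{b_i}$. Moreover, $T_{\bar b_o}(b_ok)=k-T_{\bar a}(ak)$ shows that $b_ok\in\mathcal{H}(b_o)$ iff $k\in\mathcal{M}(\bar a)$. So what one can prove in general is $\mathcal{H}(b)=\mathcal{H}(b_o)\iff K_{b_i}\subseteq\mathcal{M}(\bar a)$. This is a priori stronger than $K_{b_i}\subseteq\mathcal{H}(b_o)$, and agrees with it once $\mathcal{H}(b_o)=\mathcal{M}(\bar a)$, which is what the Corona property provides.

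The real missing ingredient is the invariance $b_i\mathcal{H}(b_o)\subseteq\mathcal{H}(b_o)$. By the same $K_{b_i}\cap b_iH^2=\{0\}$ trick applied to $b_i^2h$, it follows once $b_i$ is a multiplier of $\mathcal{D}(\mu)=\mathcal{H}(b)$. That in turn can be derived from the identity $D_\zeta(b_iF)=D_\zeta(F)+|F(\zeta)|^2D_\zeta(b_i)$, valid wherever both local integrals are finite. For $F=g+b_ih\in\mathcal{H}(b)$ it gives
\[
|F(\zeta)|^2D_\zeta(b_i)\le 2D_\zeta(b_ig)+2D_\zeta(b_ih),
\]
and the right-hand side is $\mu$-integrable because $b_ig\in\mathcal{D}(\mu)$ by Lemma~\ref{innerdmu} and $b_ih\in\mathcal{D}(\mu)$. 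Some argument of this kind, or the citation of \cite{CR13}, is what your proposal lacks.
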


\begin{proof}
Set $u = b_i$. Then
\[
\mathcal{H}(b) = K_u \oplus u\mathcal{H}(b_o),
\]
see~\cite{FHR18}.
Thus $K_u \subseteq \mathcal{D}(\mu)$.
Since $\mathcal{D}(\mu)$ contains all polynomials, $b$ is non-extreme. The equality $|b_o| = |b|$ $m$-almost everywhere implies that $b_o$ is also non-extreme. Hence $1\in \mathcal{H}(b_o)$, so
\[
u \in u\mathcal{H}(b_o) \subseteq \mathcal{D}(\mu).
\]

Let $f\in K_u$. Then $C_u f\in K_u$. So $f, C_u f\in \mathcal{D}(\mu)$. Lemma \ref{innerdmu} gives $uf\in \mathcal{D}(\mu)$. Thus
\[
uK_u \subseteq \mathcal{D}(\mu) = \mathcal{H}(b).
\]
Under the hypothesis $\mathcal{D}(\mu) = \mathcal{H}(b)$, \cite[Theorem 5.4(iii)]{CR13} states that
\[
\mathcal{H}(b) = \mathcal{H}(b_o) \iff uK_u \subseteq \mathcal{H}(b).
\]
The required inclusion has just been established. Therefore $\mathcal{D}(\mu) = \mathcal{H}(b_o)$.

\end{proof}

\section{Proof of Theorem \ref{T:main2}}\label{S3}
As we mentioned in the Introduction, the proof of Theorem \ref{T:main2} relies on the countable Corona theorem. We recall some basic facts of $\ell^2$-valued Hardy spaces. We begin by introducing
 \[
 H^2(\ell^2)=\{ F=(F_n)_{n\in\mathbb{N}}\colon F_n\in H^2,\|F\|_{H^2(\ell^2)}^2:=\sum_{n=0}^\infty \|F_n\|_{H^2}^2<\infty\}.
 \]
 We can also view the elements of $H^2(\ell^2)$ as (analytic) functions $F\colon \mathbb{D}\to\ell^2$, but we prefer the interpretation of infinite vectors $(F_n)_n$.
We also consider 
\[
H^\infty(\ell^2)=\{ F=(F_n)_{n\in\mathbb{N}}\colon F_n\in H^\infty, \,\,\sup_{z\in\mathbb{D}}\sum_{n=0}^\infty|F_n(z)|^2<\infty\}.
\] 
One can check that $H^\infty(\ell^2)\subseteq H^2(\ell^2)$.


Given a vector $F\in H^\infty(\ell^2)$, we consider the row multiplier $M_F^R : H^2(\ell^2)\to H^2$ and the column multiplier $M_F^C : H^2 \rightarrow H^2(\ell^2)$, that is,
\[
M_F^R (h_0, h_1, \ldots)^T = \sum_{n=0}^\infty F_n h_n, \qquad M_F^C h = (F_0h, F_1h, \ldots)^T.
\]
If $F\in H^\infty(\ell^2)$, then $M_F^R,M_F^C$ are bounded. 
The adjoint of the row multiplier is given by
$(M_F^R)^*h=(T_{\overline{F_n}}h)_{n\in \N}$.
Indeed, for $(k_n)_{n\in \N}\in H^2(\ell^2)$ and $h\in H^2$,
\[
\big\langle M_F^R(k_n),h\big\rangle_{H^2}
=\sum_{n=0}^\infty\langle F_nk_n,h\rangle_{H^2}
=\sum_{n=0}^\infty\langle k_n,T_{\overline{F_n}}h\rangle_{H^2}
=\big\langle(k_n)_n,(T_{\overline{F_n}}h)_n
\big\rangle_{H^2(\ell^2)}.
\]

In the setting of $H^\infty(\ell^2)$, a countable version of the Corona theorem is available, by a result of Rosenblum \cite{Ro80}. If $F\in H^\infty(\ell^2)$, then
\[
\inf_{z\in\mathbb{D}}\sum_{n=0}^\infty |F_n(z)|^2>0
\]
if and only if there exists $G=(G_n)_n$ in $H^\infty(\ell^2)$ such that $\sum_{n=0}^\infty F_n G_n =1.$

These constructions are fundamental for the proof of the main theorem.

\begin{proof}[Proof of Theorem \ref{T:main2}]
The necessity follows from Theorem \ref{T:main1} and \eqref{dmueqhbnecco}. We deal with the sufficiency. Because of (i), the spaces $\mathcal{H}(b)$ and $\mathcal{M}(\overline{a})$ coincide. Then, it suffices to verify that $\mathcal{D}(\mu)=\mathcal{M}(\overline{a})$, assuming the potential condition.

The space \(L^2(\mu)\) is separable, since $\mu$ is finite. Choose a countable orthonormal basis \((e_n)_{n=1}^N\), where \(N\) may be finite or infinite. We use the notation $1\leq n<N+1$ to include all such vectors. Using Parseval's identity, we can rewrite the potential $V_\mu$ as follows. For $z\in\mathbb{D}$,
\begin{align}
\nonumber V_\mu(z)&= \int_{\mathbb{T}}\frac{1}{|1-\overline{\xi}z|^2}\operatorname{d}\!\mu(\xi) = \|c_{\overline{z}}\|_{L^2(\mu)}^2 \\
\label{eq:formulaV}&= \sum_{n=1}^N |\langle e_n,c_{\overline{z}}\rangle_{L^2(\mu)}|^2 = \sum_{n=1}^N \left|\int_{\mathbb{T}}\frac{e_n(\xi)}{1-\overline{\xi}z}\operatorname{d}\!\mu(\xi)\right|^2.
\end{align}

Define
\begin{equation*}
F_0(z) = a(z), \qquad F_n(z) = za(z) \int_{\mathbb{T}} \frac{e_n(\xi)}{1 - \overline{\xi} z} \, \operatorname{d}\!\mu(\xi), \quad 1 \le n < N+1.
\end{equation*}
The functions $F_n$ are analytic in $\D$. Also, for fixed $1\leq n< N+1$ the function $F_n$ is bounded: 
\[
|F_n(z)|\leq |a(z)|\|e_n\|_{L^2(\mu)} V_\mu(z)^\frac{1}{2} \leq \sup_{\mathbb{D}}|a|V_\mu^\frac{1}{2} <\infty.
\]
Notice that
\begin{equation*}
1 + |z|^2 V_\mu(z) \asymp 1 + V_\mu(z), \qquad z \in \mathbb{D},
\end{equation*}
and, by \eqref{eq:formulaV},
\begin{align*}
\sum_{n=0}^N |F_n(z)|^2 &= |a(z)|^2 + |za(z)|^2  V_\mu(z) = |a(z)|^2\big(1 + |z|^2 V_\mu(z)\big) \asymp 1, \qquad z \in \mathbb{D}.
\end{align*}
Thus $F=(F_n)_{n=0}^N$ belongs to $H^\infty(\ell^2)$, with its squared norm bounded above and below throughout $\mathbb{D}$.
Thus by the countable Corona theorem \cite{Ro80}, there is $G=(G_n)_{n=0}^N\in H^\infty(\ell^2)$ such that
\[
\sum_{n=0}^N F_n(z) G_n(z) = 1, \qquad z \in \mathbb{D}.
\]
Then the row multiplier $M_F^R$ has bounded right inverse $M_G^C$, that is, $M_F^R M_G^C$ is the identity operator on $H^2$. Taking adjoints gives
\[
h=(M_G^C)^*(M_F^R)^*h=(M_G^C)^*\bigl(T_{\overline{F_n}} h\bigr)_{n}, \qquad h\in H^2.
\]
In particular,
\begin{align*}
\|h\|_{H^2}
&=\bigl\|(M_G^C)^*(M_F^R)^*h\bigr\|_{H^2}\le \|M_G^C\|\,\|(M_F^R)^*h\|_{H^2(\ell^2)}
\le \|M_G^C\|\,\|M_F^R\|\,\|h\|_{H^2},
\end{align*}
and
\[
\frac{1}{\|M_G^C\|^2}\|h\|_{H^2}^2
\le
\sum_{n=0}^N\|T_{\overline{F_n}} h\|_{H^2}^2
\le
\|M_F^R\|^2\|h\|_{H^2}^2,
\]
since
\[
\|(M_F^R)^*h\|_{H^2(\ell^2)}^2
=\sum_{n=0}^N\|T_{\overline{F_n}} h\|_{H^2}^2.
\]
We want to compute the norm $\|T_{\overline{F_n}} h\|_{H^2}$, for every polynomial $h$.
By the definition of \(F_n\), for $n\ge 1$,
\[
F_n(z)
=za(z)\int_{\mathbb{T}}\frac{e_n(\xi)}{1-\overline{\xi}z}\operatorname{d}\!\mu(\xi)=\sum_{k=1}^{\infty}
\left(
  \int_{\mathbb T}e_n(\xi)\overline\xi^{\,k-1}\,\operatorname{d}\!\mu(\xi)
\right)z^k a(z).
\]
We set
\[
\gamma_k(n):=\int_{\mathbb T}e_n(\xi)\overline\xi^{\,k-1}\,\operatorname{d}\!\mu(\xi), \qquad n,k\geq 1.
\]
If $h$ has degree $d$, then $T_{\overline{F_n}}h,T_{\bar{a}}h$ are also polynomials of degree at most $d$. Let us write \(
f=T_{\overline a}h=\sum_{m=0}^{d}c_mz^m
\). For $j\leq d$,
\begin{align*} \label{Eq:coeffzj}
\langle T_{\overline{F_n}}h,z^j\rangle_{H^2}&=\langle h,a(z)z^j\sum_{k=1}^\infty \gamma_k(n)z^k\rangle_{H^2}=\langle h,a(z)z^j\sum_{k=1}^d \gamma_k(n)z^k\rangle_{H^2}\\
&=\langle T_{\bar{a}}h,\sum_{k=j+1}^{d+j} \gamma_{k-j}(n)z^k\rangle_{H^2}
=
\sum_{k=j+1}^d c_k \overline{\gamma_{k-j}(n)}.    
\end{align*}
It follows that
\begin{align*}
T_{\overline{F_n}}h(z)
&=
\sum_{j=0}^{d-1}\sum_{k=j+1}^d c_k \overline{\gamma_{k-j}(n)}z^j 
=
\int_{\mathbb T}\overline{e_n(\xi)}\sum_{j=0}^{d-1}\sum_{k=j+1}^d c_k z^j\xi^{\,k-j-1}\operatorname{d}\!\mu(\xi)\\
&=
\int_{\mathbb T}\overline{e_n(\xi)}\sum_{k=1}^d c_k\xi^{k-1}\frac{1-(\overline{\xi}z)^{k}}{1-\overline{\xi}z} \operatorname{d}\!\mu(\xi)=\int_{\mathbb T}\overline{e_n(\xi)}\frac{f(\xi)-f(z)}{\xi-z}\operatorname{d}\!\mu(\xi).
\end{align*}
In conclusion, we found that
\begin{equation*}
    T_{\overline{F_n}}h(z) = \int_{\mathbb{T}} Q_\xi f(z) \overline{e_n(\xi)}\operatorname{d}\!\mu(\xi), \qquad z\in\mathbb{D}.
\end{equation*}
Notice that, since $f$ is a polynomial, \begin{equation}\label{eq:Qzxi}
Q_\xi f(z)=Q_zf(\xi), \qquad z\in\overline{\mathbb{D}},\,\,\xi\in\mathbb{T}.    
\end{equation}
In particular,
\[
|T_{\overline{F_n}}h(z)|^2=|\langle Q_z f, e_n\rangle_{L^2(\mu)}|^2, \qquad z\in\overline{\mathbb{D}}.
\]
It follows by Parseval's identity that
\begin{align*}
\sum_{n=1}^{N}\|T_{\overline{F_n}}h\|_{H^2}^2
&=\int_{\mathbb{T}}\sum_{n=1}^{N}|T_{\overline{F_n}}h(\zeta)|^2\,\operatorname{d}\!m(\zeta)\\
&=
\int_{\mathbb{T}}\|Q_\zeta f\|_{L^2(\mu)}^2\,\operatorname{d}\!m(\zeta)
=
\int_{\mathbb{T}}\|Q_\xi f\|_{H^2}^2\,\operatorname{d}\!\mu(\xi),
\end{align*}
where in the last identity we used \eqref{eq:Qzxi}.
Thus, for every polynomial $h$,
\begin{align*}
\|T_{\overline{a}} h\|_{\mathcal{D}(\mu)}^2& = \|T_{\overline{a}} h\|_{H^2}^2 + \int_{\mathbb{T}} \big\|Q_\xi T_{\overline{a}} h\big\|_{H^2}^2 \operatorname{d}\!\mu(\xi)\\
& = \|T_{\overline{F_0}}h\|_{H^2}^2 + \sum_{n=1}^N \|T_{\overline{F_n}} h\|_{H^2}^2 \asymp \|h\|_{H^2}^2.
\end{align*}
By the density of polynomials in $H^2$, $T_{\overline{a}}$ extends to a bounded and bounded-below operator $H^2\to \mathcal{D}(\mu)$. In particular, the range of $T_{\overline{a}}$ is closed in $\mathcal{D}(\mu)$. Since $a$ is an outer function, the range of $T_{\overline{a}}$ contains all the polynomials. Since polynomials are dense in $\mathcal{D}(\mu)$ (\cite[Theorem 1]{Ri88}), we conclude that $\mathcal{M}(\overline{a}) = \mathcal{D}(\mu)$.

\end{proof}

\section{Further results}\label{S4}

In this section we collect some corollaries of the main theorems, which will be used in the examples of the next sections. 

\begin{lemma}\label{hbeqaulsmbara}
Let $b\in H^\infty_1$ and let $a$ be its Pythagorean mate. If $a+b=1$ or $b^{-1}\in H^\infty$,
then $\mathcal{H}(b)=\mathcal M(\overline a)$ with equivalent norms.
\end{lemma}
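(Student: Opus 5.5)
The plan is to reduce both cases to the fact, recalled in the preliminaries (\cite[Chapter 28]{FM16}), that $\mathcal{H}(b)=\mathcal M(\overline a)$ with equivalent norms as soon as $b$ is non-extreme and $(b,a)$ is a Corona pair. So in each case I only need to check two things: that $b$ is non-extreme, so that the Pythagorean mate $a$ exists, and that $\inf_{\mathbb D}(|a|+|b|)>0$.

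\emph{Case $a+b=1$.} Non-extremality is built into the hypothesis, since the Pythagorean mate $a$ is assumed to exist. For the Corona condition, the triangle inequality gives, for every $z\in\mathbb D$,
\[
1=|a(z)+b(z)|\le |a(z)|+|b(z)|.
\]
Hence $\inf_{\mathbb D}(|a|+|b|)\ge 1$, and the cited result applies.

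\emph{Case $b^{-1}\in H^\infty$.} Here the Corona condition is immediate. Since $|b(z)|\ge \|b^{-1}\|_\infty^{-1}>0$ for all $z\in\mathbb D$, we get
\[
|a(z)|+|b(z)|\ge \|b^{-1}\|_\infty^{-1}.
\]

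The only point needing a little care is non-extremality in this case, which must be read as part of the hypothesis that $b$ has a Pythagorean mate. Note that $b^{-1}\in H^\infty$ alone does not force it: a constant $b$ of modulus one is invertible but extreme. So the statement should be understood with $b$ non-extreme, and $a$ the associated outer mate.

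With both conditions verified in each case, \cite[Chapter 28]{FM16} yields $\mathcal{H}(b)=\mathcal M(\overline a)$ with equivalent norms. I expect no real obstacle here; the lemma is essentially a direct corollary of the Corona characterization.
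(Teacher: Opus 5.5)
Your argument is correct, but it takes a different route from the paper's. You reduce both cases to the general characterization recalled in the preliminaries (\cite[Chapter 28]{FM16}): $\mathcal{H}(b)=\mathcal M(\overline a)$ with equivalent norms whenever $(b,a)$ is a Corona pair. You then only check the Corona condition, which is exactly the observation the paper records in the remark after the lemma.

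The paper's proof is direct instead. It starts from Sarason's contractive inclusion $\mathcal M(\overline a)\subseteq\mathcal{H}(b)$ and the description that $f\in\mathcal{H}(b)$ if and only if $T_{\overline b}f=T_{\overline a}f^+$ for some $f^+\in H^2$. It then writes $f=T_{\overline a}(f+f^+)$ when $a+b=1$, and $f=T_{\overline{1/b}}T_{\overline b}f=T_{\overline a}T_{\overline{1/b}}f^+$ when $b^{-1}\in H^\infty$. Norm equivalence follows from the open mapping theorem.

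In effect, the paper runs the sufficiency half of the Corona characterization in the two cases where a Bezout identity $\phi a+\psi b=1$ with $\phi,\psi\in H^\infty$ is explicit: $\phi=\psi=1$, resp.\ $\phi=0$, $\psi=1/b$. So it never needs Carleson's Corona theorem, which the general characterization uses to produce $\phi,\psi$.

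Your version is shorter and shows the lemma as a special case of the Corona picture. The paper's version is self-contained and elementary. Your remark that non-extremality must be read into the hypothesis, since $a$ is assumed to be the Pythagorean mate, is accurate and applies equally to the paper's statement.
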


\begin{proof}
It is known that $\mathcal M(\overline a)$ is contractively contained in
$\mathcal{H}(b)$~\cite[Lemma 3]{Sa86}. Conversely, $f\in\mathcal{H}(b)$ if and only if
$f\in H^2$ and $T_{\overline b}f=T_{\overline a}f^+$ for some $f^+\in H^2$. Let
$f\in\mathcal{H}(b)$.
\begin{itemize}
    \item If $a+b=1$, then
    $f=T_{\overline a}f+T_{\overline b}f=T_{\overline a}(f+f^+)\in\mathcal M(\overline a)$.
    \item If $b^{-1}\in H^\infty$, then $T_{\overline b}$ is invertible with inverse
    $T_{\overline{1/b}}$, and co-analytic Toeplitz operators commute, so
    $f=T_{\overline{1/b}}T_{\overline b}f=T_{\overline{1/b}}T_{\overline a}f^+
    =T_{\overline a}T_{\overline{1/b}}f^+\in\mathcal M(\overline a)$.
\end{itemize}
Hence $\mathcal{H}(b)=\mathcal M(\overline a)$ as sets. The inclusion
$\mathcal M(\overline a)\hookrightarrow\mathcal{H}(b)$ is then a bounded bijection between
Hilbert spaces, so its inverse is bounded by the open mapping theorem, and the norms are
equivalent.
\end{proof}

Both hypotheses of Lemma \ref{hbeqaulsmbara} imply the Corona condition:
if $a+b=1$ then $|a|+|b|\ge|a+b|=1$, and if $b^{-1}\in H^\infty$ then
$|b|\ge1/\|b^{-1}\|_\infty$. The proof above is a direct argument that does not use this
characterization.

\begin{corollary}\label{maincordmuqhbiffc2}
Let $\mu$ be a finite positive Borel measure on $\mathbb T$. The following are equivalent.
\begin{enumerate}
    \item[(i)] There exists $b$ in $H^\infty_1$ such that $\mathcal D(\mu)=\mathcal{H}(b)$.
    \item[(ii)] There exists $\beta\in H^\infty_1$ with $1/\beta\in H^\infty$ such that
    $\mathcal D(\mu)=\mathcal H(\beta)$.
    \item[(iii)] There exists an outer function $A\in H^\infty_1$ such that
    \[
        |A(z)|^2\bigl(1+V_\mu(z)\bigr)\asymp1,\qquad z\in\mathbb D .
    \]
\end{enumerate}
Moreover, if (iii) holds, then (ii) holds with $\beta$ the outer function with
$\beta(0)>0$ and $|\beta|^2=1-|a|^2$ on $\mathbb T$, where
$a=A/2$, and $\mathcal D(\mu)=\mathcal M(\overline A)=\mathcal H(\beta)$.
\end{corollary}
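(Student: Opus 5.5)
The plan is to prove (i)$\Rightarrow$(iii)$\Rightarrow$(ii)$\Rightarrow$(i), where the last implication is trivial.

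For (i)$\Rightarrow$(iii), suppose $\mathcal D(\mu)=\mathcal H(b)$. Theorem \ref{T:main2} shows that $b$ is non-extreme and that its Pythagorean mate $a$ satisfies \eqref{suffdmueqmba}. Since $a$ is outer and lies in $H^\infty_1$, we take $A=a$.

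For (iii)$\Rightarrow$(ii), let $A$ be as in (iii) and set $a=A/2$. Then $|a|\le 1/2$ on $\mathbb T$, so $1-|a|^2\ge 3/4$ there. In particular $\log(1-|a|^2)$ is bounded, so there is an outer $\beta$ with $\beta(0)>0$ and $|\beta|^2=1-|a|^2$ on $\mathbb T$. Because $\log|\beta|$ is the Poisson integral of a function with values in $[\tfrac12\log\tfrac34,0]$, the function $\beta$ satisfies $\sqrt{3}/2\le|\beta|\le 1$ on $\mathbb D$. Hence $\beta\in H^\infty_1$ and $1/\beta\in H^\infty$. Also $|\beta|^2+|a|^2=1$ on $\mathbb T$ with $a$ outer.

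Next we must check that $a$ is the Pythagorean mate of $\beta$, including the normalisation $a(0)>0$. If $A(0)$ is not positive, replace $A$ by a unimodular multiple of itself; this changes neither the modulus condition nor $\mathcal M(\overline A)$. Then $\beta$ is non-extreme with mate $a$.

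Since $|\beta|$ is bounded below, the pair $(\beta,a)$ is a Corona pair. Moreover $|a|^2(1+V_\mu)=\tfrac14|A|^2(1+V_\mu)\asymp 1$. Theorem \ref{T:main2} then gives $\mathcal H(\beta)=\mathcal D(\mu)$. Alternatively, Lemma \ref{hbeqaulsmbara} together with the sufficiency part of the proof of Theorem \ref{T:main2} gives the same conclusion directly.

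For the final assertion, note that $\mathcal M(\overline a)=\mathcal M(\overline A)$ with equivalent norms, because $T_{\overline a}=\tfrac12 T_{\overline A}$. By Lemma \ref{hbeqaulsmbara}, $\mathcal H(\beta)=\mathcal M(\overline a)$, and the sufficiency argument shows $\mathcal M(\overline a)=\mathcal D(\mu)$. Together these give $\mathcal D(\mu)=\mathcal M(\overline A)=\mathcal H(\beta)$.

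The only delicate point is the bookkeeping around the Pythagorean mate: its normalisation, and the fact that it is determined by $|a|$ on $\mathbb T$ as the outer function. Both are handled by the rotation remark above.
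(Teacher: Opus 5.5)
Your proposal is correct and follows essentially the same route as the paper: (i)$\Rightarrow$(iii) by taking $A$ to be the Pythagorean mate supplied by Theorem \ref{T:main2}, and (iii)$\Rightarrow$(ii) by rotating $A$ so that $A(0)>0$, setting $a=A/2$, forming the outer $\beta$ with $|\beta|^2=1-|a|^2\ge 3/4$ (so $1/\beta\in H^\infty$ and $(\beta,a)$ is a Corona pair), and applying Theorem \ref{T:main2}. Your extra remarks are harmless: $\beta$ is unaffected by the rotation, and $\mathcal M(\overline a)=\mathcal M(\overline A)$. The detour through Lemma \ref{hbeqaulsmbara} is an optional alternative that the paper does not need.
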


\begin{proof}
That (ii)$\Rightarrow$(i) is trivial.
(i)$\Rightarrow$(iii) follows from Theorem \ref{T:main2}, taking for $A$ the mate $a$ of $b$. We prove that (iii)$\Rightarrow$(ii). 

Multiplying $A$ by a unimodular constant, we may assume $A(0)>0$. Set
\[
    a(z)=\frac{A(z)}{2},\qquad  \beta(z)=\exp\Bigl\{\frac12\int_{\mathbb T}\frac{\xi+z}{\xi-z}\,\log\bigl(1-|a(\xi)|^2\bigr)\operatorname{d}\!m(\xi)\Bigr\}, \qquad z\in\mathbb{D}.
\]
Then, $\beta$ is outer and $|\beta|^2=1-|a|^2\ge\frac34$ on $\mathbb T$, thus $\beta^{-1}\in H^\infty$ and $(\beta,a)$ is a Pythagorean pair. Since $a=A/2$, by Theorem \ref{T:main2} we have that 
\[
\mathcal{D}(\mu)= \mathcal{H}(\beta)=\mathcal{M}(\overline{a})=\mathcal{M}(\overline{A}). \qedhere
\]
\end{proof}

An advantage of this corollary is that, as soon as $\mathcal D(\mu)$ coincides with some
$\mathcal{H}(b)$, we obtain a canonical symbol $\beta$ that does the job: it is explicit in
terms of $A$ and bounded away from zero.

Finally, we can specialize Theorem \ref{T:main2} to symbols $b=(1+u)/2$ with $u$ inner, the setting of~\cite{BDHM26}.

\begin{corollary}\label{cordmuhohfinniffc}
Let $\mu$ be a finite positive Borel measure on $\mathbb T$ and let $u$ be an inner function.
Then $\mathcal D(\mu)=\mathcal H\bigl((1+u)/2\bigr)$ if and only if
\begin{equation}\label{dmuhohfinniffc}
    |1-u(z)|^2\bigl(1+V_\mu(z)\bigr)\asymp1,\qquad z\in\mathbb D .
\end{equation}
\end{corollary}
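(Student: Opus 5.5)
The plan is to apply Theorem \ref{T:main2} with $b=(1+u)/2$. The first step is to identify the Pythagorean mate. On $\mathbb T$ we have $|1+u|^2+|1-u|^2=2(|1|^2+|u|^2)=4$ almost everywhere, so
\[
\Bigl|\frac{1+u}{2}\Bigr|^2+\Bigl|\frac{1-u}{2}\Bigr|^2=1 \quad m\text{-a.e.}
\]
The function $(1-u)/2$ belongs to $H^\infty_1$ and has positive real part in $\mathbb D$, since $|u|<1$ there (we exclude the trivial case $u\equiv 1$, i.e.\ we assume $u$ is not a unimodular constant equal to $1$). A function with positive real part is outer. Hence $b$ is non-extreme, and its Pythagorean mate is $a=c(1-u)/2$ for a unimodular constant $c$ fixed by the normalization $a(0)>0$. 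Since $|a|=|1-u|/2$, the constant $c$ plays no role in what follows.

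The second step is to verify condition (i) of Theorem \ref{T:main2} automatically. We have $|a|+|b|\ge \tfrac12|(1-u)+(1+u)|=1$ in $\mathbb D$ (using $|c|=1$), so $(b,a)$ is a Corona pair. Alternatively, Lemma \ref{hbeqaulsmbara} applies directly after rotating: it gives $\mathcal H(b)=\mathcal M(\overline a)$.

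The third step is to apply Theorem \ref{T:main2}. With (i) holding unconditionally, the equality $\mathcal D(\mu)=\mathcal H((1+u)/2)$ holds if and only if $|a(z)|^2(1+V_\mu(z))\asymp 1$ on $\mathbb D$. Since $|a|^2=|1-u|^2/4$, this is exactly \eqref{dmuhohfinniffc}.

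The only delicate point is the claim that $1-u$ is outer, which is what makes it the genuine mate. The standard route is to note that $\operatorname{Re}(1-u)>0$ in $\mathbb D$, so $\log(1-u)$ is analytic with bounded imaginary part; hence $1-u$ is outer. Nothing else is an obstacle: the degenerate case where $u$ is constant is handled directly, and a nonconstant inner $u$ (or a constant $u\ne1$) fits the argument above.
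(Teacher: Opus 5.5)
Your proposal is correct and is essentially the paper's own argument: the paper gives no separate proof and simply specializes Theorem \ref{T:main2} to $b=(1+u)/2$, whose mate is $(1-u)/2$ up to a unimodular constant. As in your write-up, $a+b=1$ (after rotation) gives the Corona condition, and $|a|^2=|1-u|^2/4$ turns \eqref{suffdmueqmba} into \eqref{dmuhohfinniffc}; the degenerate case $u\equiv1$ holds trivially, since both sides fail because $\mathcal H(1)=\{0\}$.
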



\section{$d$-Regular measures}\label{S5}

In this section we give examples of measures $\mu$ with \emph{uncountable} support for which $\mathcal{D}(\mu)$ coincides with a de Branges--Rovnyak space. The examples are produced by the following quantitative condition on $\mu$, which allows us to write down explicitly an outer function $A\in H^\infty$ such that $|A|^2(1+V_\mu)\asymp 1$. We use the standard notation $B(\eta,r):=\{z\in\C:|z-\eta|<r\}$. Also, given two functions $f,g$, by $f\lesssim g$ we mean that $|f(z)|\leq C|g(z)|$ uniformly in $z$ in a specific set, that will be clear from the context. In particular, $f\asymp g$ if and only if $f\lesssim g$ and $g\lesssim f$. Finally, $f_+=\max\{f,0\}.$

\begin{definition}\label{defregular}
A finite positive Borel measure $\mu\ne0$ on $\T$ with support $E$ is \emph{regular} (of dimension $d$) if there are constants $0<c_0<C_0<\infty$ and $0<d<1$ such that
\begin{equation}\label{promeaureg}
    c_0\,r^d\le \mu(B(\eta,r))\le C_0\,r^d,\qquad \eta\in E,\ 0<r\le 1 .
\end{equation}
\end{definition}

This is the usual notion of a $d$-regular (Ahlfors--David regular) measure~\cite{DS93}. We believe that some parts in this section are standard and well-known to experts of geometric measure theory. However, for the sake of completeness, we will include proofs.

Two immediate consequences can be derived from the definition of regularity:
\begin{enumerate}
    \item The upper bound in
\eqref{promeaureg} holds for every radius: if $r>1$, then $r^d>1$, so
$\mu(B(\eta,r))\le\mu(\T)\le\mu(\T)\,r^d$, and therefore, with $C_1=\max\{C_0,\mu(\T)\}$,
\begin{equation}\label{upperallr}
    \mu(B(\eta,r))\le C_1\,r^d,\qquad\eta\in E,\ r>0 .
\end{equation}
\item The measure $\mu$ has no atoms: if $\eta\in E$, then $\mu(\{\eta\})\le\mu(B(\eta,r))\le C_0r^d$
for every $0<r\le1$, and the right-hand side tends to $0$ as $r\to0$; if $\eta\notin E$, then
$\mu(\{\eta\})\le\mu(\T\setminus E)=0$, because $E$ is the support of $\mu$.
\end{enumerate}

We will also show that a $d$-regular measure $\mu$ with $0<d<1$ is necessarily singular with respect to the Lebesgue measure. We will apply the following theorem to the canonical measure on the middle-third Cantor set, see Proposition \ref{cantorexnam}.

\begin{theorem}\label{mainthmmuprnoatom}
Let $\mu$ be a non-zero finite positive Borel measure on $\T$, with support $E$, satisfying \eqref{promeaureg} for some $0<d<1$. Define
\begin{equation*}
    A(z)=\exp\left\{\Bigl(1-\frac d2\Bigr)\int_{\T}\frac{\xi+z}{\xi-z}\,\log\frac{\operatorname{dist}(\xi,\operatorname{supp}(\mu))}{2}\operatorname{d}\!m(\xi)\right\},\qquad z\in\mathbb D .
\end{equation*}
Then $A$ is outer, $A(0)>0$, $\|A\|_\infty\le 1$, and
\[
    |A(z)|^2\bigl(1+V_\mu(z)\bigr)\asymp 1,\qquad z\in\mathbb D .
\]
In particular, there exists $b\in H^\infty_1$ such that
\begin{equation*}
    \mathcal{D}(\mu)=\mathcal{H}(b).
\end{equation*}
\end{theorem}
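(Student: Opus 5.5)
The plan is to verify that $A$ is a well-defined outer function and then reduce the estimate $|A|^2(1+V_\mu)\asymp1$ to two geometric facts about $E=\operatorname{supp}(\mu)$. The first fact is that $1+V_\mu(z)\asymp \operatorname{dist}(z,E)^{d-2}$. The second is that the Poisson extension of $\varphi(\xi):=\log(\operatorname{dist}(\xi,E)/2)$ satisfies $P[\varphi](z)=\log\operatorname{dist}(z,E)+O(1)$. Since $\log|A|=(1-\tfrac d2)P[\varphi]$, these give $|A(z)|^2\asymp \operatorname{dist}(z,E)^{2-d}\asymp (1+V_\mu(z))^{-1}$. The last assertion then follows from Corollary \ref{maincordmuqhbiffc2}, (iii)$\Rightarrow$(i).

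\emph{Step 1 (porosity and integrability).} From \eqref{promeaureg}, take a maximal $r$-separated subset of $E\cap J$ for an arc $J$ centred on $E$ with $r\le|J|$. The balls $B(\eta,r/2)$ around its points are disjoint and each has mass $\gtrsim r^d$, while their union has mass $\lesssim |J|^d$ by \eqref{upperallr}. So there are $\lesssim (|J|/r)^d$ such points, and hence
\[
m\{\xi\in J:\operatorname{dist}(\xi,E)<r\}\lesssim (|J|/r)^d\, r= |J|\,(r/|J|)^{1-d}.
\]
With $J=\T$, this gives $m(E)=0$, so $\mu$ is singular. A layer-cake argument shows $\varphi\in L^1(m)$, because $d<1$ makes $\int_0 t^{-d}\,dt/t\cdot t$ finite. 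Also $\varphi\le0$ since $\operatorname{dist}\le2$. Hence $A$ is outer, $A(0)=\exp((1-\tfrac d2)\int\varphi\,dm)>0$, and $\|A\|_\infty\le1$.

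\emph{Step 2 (the potential).} Fix $z\in\D$, let $\delta=\operatorname{dist}(z,E)\le 2$, and let $\eta\in E$ be a nearest point. Split $\T$ into the sets where $|\xi-\eta|<\delta$ and where $2^{k-1}\delta\le|\xi-\eta|<2^k\delta$. On the $k$-th piece $|\xi-z|\gtrsim 2^k\delta$, and \eqref{upperallr} bounds its mass by $C_1(2^k\delta)^d$. Summing the geometric series $\sum 2^{k(d-2)}$ gives $V_\mu(z)\lesssim\delta^{d-2}$. For the lower bound, $B(\eta,\delta)$ has mass $\ge c_0\delta^d$ (or $\gtrsim1$ if $\delta>1$) and lies within distance $2\delta$ of $z$. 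Hence $1+V_\mu(z)\asymp\delta^{d-2}$, the $1$ being harmless because $\delta\le2$.

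\emph{Step 3 (harmonic extension of $\log\operatorname{dist}$), the main obstacle.} The upper bound uses the $1$-Lipschitz property: $\varphi(\xi)\le\log(\delta+|\xi-z|)$. The Poisson kernel at $z$ is $\lesssim (1-|z|)/|\xi-z|^2$ and has unit mass. Splitting dyadically in $|\xi-z|/\delta$ gives $P[\varphi](z)\le\log\delta+C$.

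The lower bound is where porosity is needed, and I would treat two regimes. If $1-|z|\ll\delta$, the arc $I$ of length $\asymp 1-|z|$ near $z/|z|$ has $\operatorname{dist}(\cdot,E)\asymp\delta$ on it and carries most of the Poisson mass. Farther out, on dyadic shells $|\xi-z|\asymp 2^j(1-|z|)$, I would use that the average of $\log\operatorname{dist}(\cdot,E)$ over any arc $J$ is $\ge\log|J|-C$. When $J$ meets $E$ this follows from the Step 1 estimate by layer cake; otherwise it holds trivially. The shell weights $2^{-j}$ are summable and $\log(2^j(1-|z|))$ only helps, though one must check that the loss $\log$ versus $\log\delta$ is controlled. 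Concretely, on shells with $2^j(1-|z|)\le\delta$ one has $\operatorname{dist}\gtrsim\delta-2^j(1-|z|)$, and one should absorb the borderline shells by the averaging bound. If $1-|z|\gtrsim\delta$, the averaging bound over arcs of length $2^j(1-|z|)$, weighted by $2^{-j}$, gives $P[\varphi](z)\ge\log(1-|z|)-C\ge\log\delta-C'$. I expect the bookkeeping at the transition scale $2^j(1-|z|)\approx\delta$ to be the only delicate point. Combining Steps 2 and 3 finishes the proof.
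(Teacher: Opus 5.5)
Your plan follows the paper's architecture exactly. Step~1 is Lemma~\ref{logintest}(i), Step~2 is Lemma~\ref{L:estimatevmuregular}, Step~3 is Lemma~\ref{logintest}(ii), and the conclusion via Corollary~\ref{maincordmuqhbiffc2} is the same. Steps~1 and~2 are essentially the paper's arguments, up to two small repairs.
\begin{itemize}
\item In Step~1 the separated set must be taken in $E$ intersected with an $r$-neighbourhood of $J$, not in $E\cap J$. A point of $J$ can be $r$-close only to points of $E$ outside $J$; this is why the paper uses $E\cap B(\eta,R+s)$.
\item In Step~2 the bound $|\xi-z|\gtrsim 2^k\delta$ on the innermost pieces holds only for $\xi\in E$, where $|\xi-z|\ge\delta$. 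This suffices, since $\mu$ lives on $E$.
\end{itemize}
In Step~3 the paper's upper bound is cleaner than your dyadic one. With $\eta\in E$ nearest to $z$, one has $\rho(\xi)\le|\xi-\eta|=|1-\bar\eta\xi|$, and $\log|1-\bar\eta z|$ is harmonic. Hence $P[\log\rho](z)\le\log\rho(z)$, with no constant at all.

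The lower bound is where your sketch, as described, does not close. Suppose on each shell $J_j$ you bound $\int_{J_j}P_z\varphi\,dm$ below by $(\sup_{J_j}P_z)\int_{J_j}\varphi\,dm$, insert the average bound $\log|J_j|-C$, and sum with weights $\asymp 2^{-j}$. The affordable weights $(\sup_{J_j}P_z)\,m(J_j)$ sum to some $\kappa>1$, so you only get $P[\varphi](z)\ge\kappa\log(1-|z|)-C$. Along a radius ending at a point of $E$, where $\delta=1-|z|$, this gives only $|A|^2\gtrsim\rho^{\kappa(2-d)}$, which is too weak.

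The missing normalization is the one the paper uses. Split off $\log\delta$ using $\int_\T P_z\,dm=1$ exactly, and spend lossy constants only on the nonnegative deficit. That is, prove $\int_\T P_z(\log(\delta/\rho))_+\,dm\le K$. In that form your shells do work:
\begin{itemize}
\item shells with $(2^j+1)(1-|z|)\le\delta/2$ have $\rho\ge\delta/2$ on them, so they contribute at most $\log 2$ times their Poisson mass;
\item each remaining shell has $|J_j|\gtrsim\delta$, so it contributes $\lesssim 2^{-j}$ by your porosity average.
\end{itemize}
Your two regimes then merge into one computation.

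The paper does this even more economically. By layer cake the deficit equals $\int_0^\delta\bigl(\int_{U_s}P_z\,dm\bigr)\,ds/s$. For $s<\delta/2$ the set $U_s$ misses $B(z,\delta/2)$. So shells of scale $2^k\delta$ around $z$, the local covering bound, and $1-|z|\le\delta$ give $\int_{U_s}P_z\,dm\lesssim(s/\delta)^{1-d}$, with no transition scale to track.
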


We fix the following notation for the rest of this section: $\mu$ is a (fixed) non-zero positive finite Borel measure on $\mathbb{T}$ and $E$ its support, a non-empty compact set. We assume that $\mu$ is regular, and we fix the constant $0<d<1$ in \eqref{defregular}.

In order to prove Theorem \ref{mainthmmuprnoatom}, we need two preliminary lemmas.
\begin{lemma}\label{L:estimatevmuregular}
Consider a $d$-regular measure $\mu$ with support $E$. Then,
\begin{equation*}
    1+V_\mu(z)\asymp\operatorname{dist}(z,E)^{d-2},\qquad z\in\mathbb D .
\end{equation*}
\end{lemma}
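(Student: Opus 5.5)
Write $\delta=\operatorname{dist}(z,E)$ for $z\in\mathbb D$. Since $E\subseteq\mathbb T$ and $|z|<1$, we have $\delta\le 2$, so $\delta^{d-2}\ge 2^{d-2}$ is bounded below. It therefore suffices to prove the two bounds
\[
V_\mu(z)\lesssim \delta^{d-2}
\qquad\text{and}\qquad
\delta^{d-2}\lesssim 1+V_\mu(z).
\]
The ``$1$'' on the left-hand side of the statement is harmless in both directions: for the upper bound it is absorbed because $\delta^{d-2}$ is bounded below, and on the lower bound side it handles points far from $E$.

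For the upper bound, use the layer-cake (dyadic shell) decomposition. Fix $z$ and choose $\eta\in E$ with $|z-\eta|=\delta$. Every $\xi\in E$ satisfies $|z-\xi|\ge\delta$, so I split the circle into the shells
\[
S_k=\{\xi\in\mathbb T:\ 2^k\delta\le|\xi-z|<2^{k+1}\delta\},\qquad k\ge0 .
\]
On $S_k$ we have $|\xi-\eta|\le|\xi-z|+\delta\le 3\cdot2^{k}\delta$, hence $S_k\cap E\subseteq B(\eta,3\cdot 2^k\delta)$. By \eqref{upperallr} this gives $\mu(S_k)\le C_1 3^d 2^{kd}\delta^d$. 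Summing,
\[
V_\mu(z)\le\sum_{k\ge0}(2^k\delta)^{-2}\,C_13^d2^{kd}\delta^d\lesssim \delta^{d-2}\sum_k 2^{k(d-2)},
\]
and the series converges since $d<2$. Only finitely many shells are nonempty, but no care is needed about this because the bound holds regardless.

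For the lower bound, consider two cases. If $\delta\ge 1/2$, say, then $\delta^{d-2}\le 2^{2-d}\lesssim 1\le 1+V_\mu$. If $\delta<1/2$, then with $\eta$ as above I take $r=\delta\le1$, so \eqref{promeaureg} gives $\mu(B(\eta,\delta))\ge c_0\delta^d$. For $\xi$ in that ball, $|\xi-z|\le 2\delta$, hence
\[
V_\mu(z)\ge \int_{B(\eta,\delta)}\frac{d\mu(\xi)}{|\xi-z|^2}\ge \frac{c_0\delta^d}{4\delta^2}=\frac{c_0}{4}\,\delta^{d-2}.
\]
Combining the two cases gives the lower bound.

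I do not expect a real obstacle here. The only points needing care are that the nearest point $\eta$ actually lies in $E$ (true because $E$ is compact), so that the regularity bounds are applicable, and the restriction $r\le1$ in \eqref{promeaureg}. The case split handles the latter for the lower bound, and \eqref{upperallr} handles it for the upper bound.
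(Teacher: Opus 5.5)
Your proof is correct and follows essentially the same route as the paper. Both use the nearest point $\eta\in E$, get the lower bound from $\mu(B(\eta,\delta))\ge c_0\delta^d$ and $|\xi-z|<2\delta$, and get the upper bound from dyadic shells contained in $B(\eta,3\cdot 2^k\delta)$ via \eqref{upperallr}. The only differences are cosmetic: you split the lower bound at $\delta<1/2$ where the paper uses $\delta\le 1$, and you take shells in $\mathbb T$ rather than in $E$.
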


\begin{proof}
Fix $z\in\mathbb D$, and set $\delta:=\operatorname{dist}(z,E)$. Notice that $\delta < \operatorname{diam}(\overline{\mathbb{D}})=2$. Since $E$ is compact, we can pick $\eta\in E$ such that $|z-\eta|=\delta$.

We begin with the lower bound.  If $\delta>1$, then $\delta^{d-2}<1\le 1+V_\mu(z)$. If $0<\delta\le1$, for $\xi\in B(\eta,\delta)$ we have $|\xi-z|\le|\xi-\eta|+|\eta-z|<2\delta$, whence
\[  V_\mu(z)\ge\int_{B(\eta,\delta)}\frac{\operatorname{d}\!\mu(\xi)}{|\xi-z|^2}\ge\frac{\mu(B(\eta,\delta))}{4\delta^2}\ge\frac{c_0}{4}\,\delta^{d-2}.
\]

We move on to the upper bound. Since $\delta<2$, we have $1\le 2^{2-d}\delta^{d-2}$, so it is enough to show that $V_\mu(z)\lesssim\delta^{d-2}$. For $k\geq 0$, consider the set
\[
    \Gamma_k=\{\xi\in E:\ 2^k\delta\le|\xi-z|<2^{k+1}\delta\}.
\]
Since $\delta=|z-\eta|=\inf\{|z-\xi|:\xi\in E\},$ we have that the (disjoint) sets $\{\Gamma_k\}_{k\geq 0}$  cover $E$. Clearly, this is a finite cover, whose number of sets depends on how close $z$ is to $E$. In any case,
\begin{equation} \label{eq:estimatevmuregular1}
V_\mu(z)=\sum_{k\ge0}\int_{\Gamma_k}\frac{\operatorname{d}\!\mu(\xi)}{|\xi-z|^2}\leq \sum_{k\ge0}\frac{\mu(\Gamma_k)}{(2^k\delta)^2}.
\end{equation}
For every $\xi\in\Gamma_k$, by the triangular inequality
\[
|\xi-\eta|\le|\xi-z|+|z-\eta|<2^{k+1}\delta+\delta\le 3\cdot2^k\delta,
\]
that is, $\Gamma_k\subseteq B(\eta,3\cdot2^k\delta)$.
Using \eqref{upperallr} and \eqref{eq:estimatevmuregular1},
\begin{align*}
V_\mu(z)\le\sum_{k\ge0}\frac{\mu\bigl(B(\eta,3\cdot2^k\delta)\bigr)}{(2^k\delta)^2}
\le C_13^d\sum_{k\ge0}\frac{(2^k\delta)^d}{(2^k\delta)^2}=\frac{C_13^d}{1-2^{d-2}}\,\delta^{d-2},
\end{align*}
completing the proof.
\end{proof}

Before stating the second lemma, we introduce the following notation. We write
\begin{align}
 \label{eqd}    \rho(w)&:=\operatorname{dist}(w,\operatorname{supp}(\mu)), \qquad w\in\overline{\mathbb D}.
\end{align}
Note that $0\le\rho\le 2$ and $0<1-|z|\le\rho(z)<2, z\in\mathbb{D}$. Also, $P_z(\zeta)=\frac{1-|z|^2}{|z-\zeta|^2},$ for $z\in\mathbb{D}$ and $\zeta\in\mathbb{T}.$

\begin{lemma}\label{logintest}
Consider a $d$-regular measure $\mu$ with support $E$. The following facts hold:
\begin{itemize}
    \item[(i)] The support of $\mu$ has Lebesgue measure $m(E)=0$, and $\log\rho\in L^1(m)$.
    \item[(ii)] There exists a constant $K$ such that
\[
    \log\rho(z)-K\ \le\ \int_{\T}P_z(\xi)\log\rho(\xi)\operatorname{d}\!m(\xi)\ \le\ \log\rho(z),\qquad z\in\mathbb D .
\]
\end{itemize}
\end{lemma}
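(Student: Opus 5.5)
For (i), I will use the lower bound in \eqref{promeaureg} together with a density argument. If $m(E)>0$, then by the Lebesgue density theorem there is a point $\eta\in E$ of density for $E$. At such a point, $\mu(B(\eta,r))\ge c_0 r^d$. I would rather argue via a covering, which gives the stronger quantitative statement needed for the logarithm. For $0<t\le 1$, consider $E_t=\{\xi\in\T:\rho(\xi)<t\}$. Take a maximal $t$-separated family $\{\eta_j\}\subset E$. The balls $B(\eta_j,t/2)$ are disjoint, so
\[
c_0 (t/2)^d\cdot\#\{j\}\le \mu(\T),
\]
which gives $\#\{j\}\lesssim t^{-d}$. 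Since $E_t\subseteq\bigcup_j B(\eta_j,2t)$, we obtain $m(E_t)\lesssim t^{1-d}$. Letting $t\to0$ gives $m(E)=0$, because $d<1$. Moreover,
\[
\int_\T|\log\rho|\,dm\lesssim 1+\sum_{k\ge0} k\,m(E_{2^{-k}})\lesssim 1+\sum_k k\,2^{-k(1-d)}<\infty.
\]
Here $\rho\le2$ bounds $\log\rho$ from above, and $\rho>0$ holds $m$-a.e. since $m(E)=0$. This proves $\log\rho\in L^1(m)$.

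For (ii), the upper bound comes from subharmonicity. The function $\rho$ is $1$-Lipschitz and $\rho(w)=\min_{\eta\in E}|w-\eta|$, so $\log\rho=\min_{\eta\in E}\log|w-\eta|$ on $\D$. Each $\log|w-\eta|$ is harmonic on $\D$. I claim $\log\rho$ is superharmonic on $\D$. This holds because it is a minimum (infimum) of harmonic functions, and it is continuous, so the infimum is superharmonic. Fix $z$ and pick $\eta\in E$ with $\rho(z)=|z-\eta|$. Then $\log\rho(\xi)\le\log|\xi-\eta|$ on $\T$. The function $w\mapsto\log|w-\eta|$ is harmonic in $\D$, integrable on $\T$, and equals the Poisson integral of its boundary values, since $\eta\in\T$ gives the standard log-singularity on the boundary, $\log|1-\bar\eta w|$ being the log of an outer function. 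Hence
\[
P[\log\rho](z)\le P[\log|\cdot-\eta|](z)=\log|z-\eta|=\log\rho(z).
\]

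The lower bound is the main obstacle. Write $\delta=\rho(z)\ge1-|z|$, and split $\T$ into the arc $I=\{\xi:|\xi-z|<2\delta\}$ and its complement. On $\T\setminus I$, every point $\xi$ satisfies
\[
\rho(\xi)\ge\rho(z)-|\xi-z|\cdot 0\ \ \text{(not useful)};
\]
instead I use $\rho(\xi)\le 2$ and handle the negative part. Since $\log\rho\le\log 2$, it suffices to bound $\int P_z(\xi)\,(\log(\delta/\rho(\xi)))_+\,dm$ by a constant. The key is the scale-invariant version of the covering estimate in (i): for $\xi$ in a dyadic annulus $2^k\delta\le|\xi-z|<2^{k+1}\delta$, we have $P_z(\xi)\lesssim \delta/(2^k\delta)^2$. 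Also, the set of such $\xi$ with $\rho(\xi)<s\cdot 2^k\delta$ is covered by $\lesssim s^{-d}$ balls of radius $\sim s2^k\delta$ centered on $E\cap B(z,2^{k+2}\delta)$. This count uses the regularity bounds at scale $2^k\delta$, with the upper bound at the large scale and the lower bound at the small one. Its $m$-measure is therefore $\lesssim s^{1-d}2^k\delta$. Consequently
\[
\int_{\text{annulus}} P_z\,\bigl(\log\tfrac{2^k\delta}{\rho}\bigr)_+dm\lesssim \frac{\delta}{(2^k\delta)^2}\cdot 2^k\delta=2^{-k},
\]
while the extra term $\log\frac{\delta}{2^k\delta}\le0$ only helps. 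On the innermost piece $I$ the same estimate with $k=0$ applies, using $P_z\lesssim 1/\delta$ there, since $\delta\ge 1-|z|$. Summing over $k$ gives a uniform constant $K$. The delicate points I expect are, first, ensuring the ball counts are uniform when $2^k\delta$ exceeds $1$, where I use \eqref{upperallr}; and second, the regime where $\delta$ is much bigger than $1-|z|$, where the Poisson kernel concentrates on an arc of length $1-|z|$ far from $E$. There $\rho\asymp\delta$ by the Lipschitz property, which I would treat separately and first.
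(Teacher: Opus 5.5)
Your proof follows the paper's. The same maximal-separated-set covering (lower regularity at the small scale, \eqref{upperallr} at the large scale) gives $m(\{\rho<s\})\lesssim s^{1-d}$ and its localized version, and the layer-cake formula then gives $\log\rho\in L^1(m)$. The upper bound in (ii) comes from comparing with $\log|1-\bar\eta z|$, and the lower bound from dyadic annuli around $z$ with $P_z\lesssim(1-|z|)/(2^k\delta)^2$. The only difference is bookkeeping: you run the layer-cake integral inside each annulus, at the cruder level $2^k\delta$. The paper instead fixes the level $s$ first and sums over annuli to get $\int_{U_s}P_z\,dm\lesssim(s/\delta)^{1-d}$. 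The two are equivalent.

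One step is false as written, and it needs the fix you only gesture at. On the innermost piece $P_z\lesssim 1/\delta$ does not hold, and $\delta\ge 1-|z|$ points the wrong way. Indeed $P_z(z/|z|)=(1+|z|)/(1-|z|)$. Moreover $P_z(\xi)\ge(1-|z|)/|\xi-z|^2\ge 1/\delta$ on the whole arc $\{|\xi-z|\le\sqrt{(1-|z|)\delta}\}$, which is much longer than $1-|z|$ when $\delta\gg 1-|z|$. So excising an arc of length comparable to $1-|z|$, as your last sentence suggests, is not enough.

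The Lipschitz observation must be applied on all of $\T\cap B(z,\delta/2)$. There $\rho(\xi)\ge\delta-|\xi-z|>\delta/2$, so $(\log(\delta/\rho))_+<\log 2$. Since $\int_\T P_z\,dm=1$, this region contributes at most $\log 2$. On $\delta/2\le|\xi-z|<\delta$ one has $P_z\le 8(1-|z|)/\delta^2\le 8/\delta$, and your covering estimate at scale $\delta$ applies verbatim; the annuli with $k\ge 0$ are fine as you wrote them. This is exactly the paper's device: for $s<\delta/2$ the set $U_s$ misses $B(z,\delta/2)$, and the levels $s\in[\delta/2,\delta]$ cost only $\log 2$.
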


\begin{proof}
For $s>0$ let 
\begin{equation}\label{eq:defUslogintest}
    U_s=\{\xi\in\T:\operatorname{dist}(\xi,E)<s\}.
\end{equation} 
By definition, $E\subseteq U_s$ for every $s>0$. We want to prove the following local estimate for $m(U_s)$: for every $\eta\in E$ and $R>0$,
\begin{equation}\label{loestesb}
    m\bigl(U_s\cap B(\eta,R)\bigr)\lesssim R^d s^{1-d},\qquad  0<s\le\min\{R,1\}.
\end{equation}
Let $\eta_1,\dots,\eta_N$ be a maximal $s$-separated subset of $E\cap B(\eta,R+s)$, that is, $|\eta_i-\eta_j|\ge s$ for $i\ne j$ and no other point of $E\cap B(\eta,R+s)$ can be added to the sequence without violating this condition, i.e.,
\[
\forall \xi\in E\cap B(\eta,R+s) \qquad \exists j\in\{1,\ldots,N\} \qquad \text{s.t.}\quad|\xi-\eta_j|<s.
\]
Such a set is non-empty and finite, since $E\cap B(\eta,R+s)$ is not empty (it contains $\eta$) and bounded. We give an upper bound for the cardinality $N$. For every $\xi$ in $\mathbb{T}$,
\[
|\xi-\eta|\leq |\xi-\eta_j|+|\eta_j-\eta|\leq |\xi-\eta_j| + R+s, \qquad j=1,\ldots,N.
\]
In particular, the balls $B(\eta_j,s/3)$ are pairwise disjoint and they are all contained in $B(\eta,3R)$, since $s\le R$. Using the lower bound in \eqref{promeaureg} (note $s/3<1$) and \eqref{upperallr},
\[
    N\,c_0\,3^{-d}s^d\le\sum_{j=1}^N\mu\bigl(B(\eta_j,s/3)\bigr)\le\mu\bigl(B(\eta,3R)\bigr)\le C_13^dR^d .
\]
Hence $N\lesssim(R/s)^d$. We proceed to prove \eqref{loestesb}.

Let $\xi\in U_s\cap B(\eta,R)$. There exists $e\in E$ with $|\xi-e|<s$, so $e\in E\cap B(\eta,R+s)$, and by maximality $|e-\eta_j|<s$ for some $j$. Thus $|\xi-\eta_j|<2s$, that is, $U_s\cap B(\eta,R)\subseteq\bigcup_{j=1}^NB(\eta_j,2s)$.
Since $m(B(\eta_j,2s)\cap\T)\lesssim s$,
\[
    m\bigl(U_s\cap B(\eta,R)\bigr)\lesssim N s\lesssim R^ds^{1-d},
\]
which is \eqref{loestesb}.

\textit{Proof of}\,(i). Fix $\eta\in E$. Since $\T\subseteq B(\eta,3)$, taking $R=3$ in \eqref{loestesb} gives
\begin{equation}\label{reqaltest}
    m(U_s)\lesssim s^{1-d},\qquad 0<s\le1 .
\end{equation}
As $E\subseteq U_s$ for every $s$, letting $s\to0$ gives $m(E)=0$.
Next, since 
\[
\{(-\log\rho)_+>t\}=\{\rho<e^{-t}\}=U_{e^{-t}}, \qquad t>0,
\]
the substitution $s=e^{-t}$ and \eqref{reqaltest} give for $f:=(-\log\rho)_+$
\[
    \int_{\T}f\operatorname{d}\!m=\int_0^\infty\!\! m(\{f>t\})\operatorname{d}\!t=\int_0^\infty\!\! m(U_{e^{-t}})\operatorname{d}\!t=\int_0^1m(U_s)\,\frac{\operatorname{d}\!s}{s}\lesssim\int_0^1s^{-d}\operatorname{d}\!s<\infty .
\]
Since also $\rho\le2$, we get $(\log\rho)_+\le\log2$, hence $\log\rho\in L^1(m)$.

\textit{Proof of}\,(ii). The upper bound is easy. For $z\in\mathbb{D}$, let $\eta\in E$ be such that $|z-\eta|=\rho(z)$. Then, for every $\xi\in\mathbb{T}$, $\rho(\xi)\le|\xi-\eta|$. Moreover $|\xi-\eta|=|1-\bar\eta\xi|$ on $\T$, and $1-\bar\eta z$ is outer, hence
\[
\int_{\T}P_z\log\rho\operatorname{d}\!m\le\int_{\T}P_z(\xi)\log|\xi-\eta|\operatorname{d}\!m(\xi)=\log|1-\bar\eta z|=\log|z-\eta|=\log\rho(z).
\]

We move on to the lower bound. Fix $z\in\mathbb D$, put $\delta=\rho(z)=\operatorname{dist}(z,E)$ and pick $\eta\in E$ with $|z-\eta|=\delta$. Since  $\int_{\T}P_z\operatorname{d}\!m=1$,
\[
\log\delta-\int_{\T}P_z(\xi)\log\rho(\xi)\operatorname{d}\!m(\xi)\leq \int_{\T}P_z(\xi)\Bigl(\log\frac{\delta}{\rho(\xi)}\Bigr)_+\operatorname{d}\!m(\xi).
\]
Then, it suffices to prove that
\[
\int_{\T}P_z(\xi)\Bigl(\log\frac{\delta}{\rho(\xi)}\Bigr)_+\operatorname{d}\!m(\xi) \leq K<\infty,
\]
where $K$ does not depend on $z$. The argument is similar to that in the proof of (i). By the substitution $s=\delta e^{-t}$,
\begin{equation}\label{eq:estimatePzlog1}
\int_{\T}P_z\Bigl(\log\frac\delta\rho\Bigr)_+\operatorname{d}\!m
=\int_0^\infty\!\int_{\{\rho<\delta e^{-t}\}}P_z\operatorname{d}\!m\operatorname{d}\!t
=\int_0^\delta\Bigl(\int_{U_s}P_z\operatorname{d}\!m\Bigr)\frac{\operatorname{d}\!s}{s},
\end{equation}
where $U_s$ is as in \eqref{eq:defUslogintest}. We first show that
\begin{equation}\label{poissonUs}
\int_{U_s}P_z\operatorname{d}\!m\lesssim\Bigl(\frac s\delta\Bigr)^{1-d},\qquad 0<s<\delta/2,
\end{equation}

If $\xi\in U_s$, there is $e\in E$ with $|\xi-e|<s$, hence $|\xi-z|\ge|e-z|-s\ge\delta-s>\delta/2$. Therefore $U_s$ is covered by the (disjoint) sets
\[
    S_k=\{\xi\in\T:\ 2^{k-1}\delta\le|\xi-z|<2^k\delta\},\qquad k\ge0 .
\]
In particular,
\[
\int_{U_s}P_z\operatorname{d}\!m=\sum_{k\ge0}\int_{U_s \cap S_k} P_z\operatorname{d}\!m\le\sum_{k\ge0}\frac{8(1-|z|)}{(2^k\delta)^2}\,m(U_s\cap S_k).
\]
By the triangular inequality, one can prove that $S_k\subseteq B(\eta,2^{k+1}\delta), k\geq 0$. Then, by \eqref{loestesb} applied to $R=2^{k+1}\delta$ 
\[
    m(U_s\cap S_k)\le m\bigl(U_s\cap B(\eta,2^{k+1}\delta)\bigr)\lesssim(2^k\delta)^ds^{1-d}, \qquad 0<s<\min\{1,R\}.
\]
Since $\delta<2$, for every $0<s<\delta/2$ we have that
\begin{align*}
\int_{U_s}P_z\operatorname{d}\!m&\lesssim\sum_{k\ge0}\frac{1-|z|}{(2^k\delta)^2}(2^k\delta)^ds^{1-d}\\
&=\frac{1-|z|}{\delta}\Bigl(\frac s\delta\Bigr)^{1-d}\sum_{k\ge0}(2^{d-2})^k\lesssim \Bigl(\frac s\delta\Bigr)^{1-d},
\end{align*}
which proves \eqref{poissonUs}. We have also used that $1-|z|\leq \delta=\rho(z)$.

We now conclude the proof. Let $C_P>0$ be a constant such that
\[
    \int_{U_s}P_z\operatorname{d}\!m\le C_P(s/\delta)^{1-d},\qquad 0<s<\delta/2,
\]
as established in \eqref{poissonUs}. Then, by \eqref{eq:estimatePzlog1},
\[
\begin{aligned}
\int_{\T}P_z\Bigl(\log\frac\delta\rho\Bigr)_+\operatorname{d}\!m
&\le C_P\int_0^{\delta/2}\Bigl(\frac s\delta\Bigr)^{1-d}\frac{\operatorname{d}\!s}{s}+\int_{\delta/2}^{\delta}\frac{\operatorname{d}\!s}{s}
=C_P\frac{2^{d-1}}{1-d}+\log2=:K,
\end{aligned}
\]
where we used \eqref{poissonUs} on $(0,\delta/2)$ and the trivial bound $\int_{U_s}P_z\operatorname{d}\!m\le1$ on $[\delta/2,\delta]$. The constant $K$ does not depend on $z$. 
\end{proof}

We can now prove the main theorem of the section.

\begin{proof}[Proof of Theorem \ref{mainthmmuprnoatom}]
By Lemma \ref{logintest}, $\log(\rho/2)\in L^1$ and $\log(\rho/2)\le0$. Thus, $A$ is the outer function with boundary modulus $(\rho/2)^{1-d/2}$. In particular, since $0\le\rho\le2$, $\|A\|_\infty\le1$. Notice also that $A(0)=\exp\{(1-\tfrac d2)\int_\T\log(\rho/2)\operatorname{d}\!m\}>0$.

For $z\in\mathbb D$, since $A$ is outer
\[
    |A(z)|^2=\exp\Bigl\{(2-d)\Bigl(\int_{\T}P_z\log\rho\operatorname{d}\!m-\log2\Bigr)\Bigr\}\asymp\rho(z)^{2-d},
\]
where we used the two-sided bound of Lemma \ref{logintest}. Together with Lemma \ref{L:estimatevmuregular},
\[
    |A(z)|^2\bigl(1+V_\mu(z)\bigr)\asymp\rho(z)^{2-d}\,\rho(z)^{d-2}=1.
\]
The conclusion now follows from Corollary \ref{maincordmuqhbiffc2}.
\end{proof}

\subsection{An explicit example: the Cantor measure} 

We now give a concrete example. We recall the construction of the Cantor set $C$, the Cantor function $f_C$ and the Cantor measure $\omega_C$. For more details, we refer to \cite{Cohn2013, Folland1999}.

Let $K_0$ be the interval $[0,1]$. In the step $1$, form $K_1$ by removing from $K_0$
the interval $(1/3,2/3)$. Thus,
\[
K_1=\left[0,\frac13\right]\cup\left[\frac23,1\right]\!.
\]
Continue this procedure: at each step $n$ we form $K_n$ by removing from $K_{n-1}$ the open middle third of each of the intervals making up $K_{n-1}$. At the step $n$, we remove $2^{n-1}$ intervals from $K_{n-1}$, and $K_n$ is the union of $2^n$ disjoint closed intervals, namely
\[
K_n = \bigcup_{j=1}^{2^n} I_{n,j},
\]
each of length $3^{-n}$. The Cantor set $C$ is the set of points that remain after infinite iterations, i.e.,
\[
C=\bigcap_{n=0}^{\infty}K_n.
\]
It is known $C$ is an uncountable compact set with zero Lebesgue measure. We now define the Cantor function $f_C\colon[0,1]\to[0,1]$. 
For each $x\in(1/3,2/3)$, let $f_C(x)=1/2$.
Next, define $f_C$ on the intervals removed from $K_1$ in the
construction of $K_2$ by setting
\[
f_C(x)=
\begin{cases}
\frac14, & x\in(1/9,2/9),\\[4pt]
\frac34, & x\in(7/9,8/9).
\end{cases}
\]
In general, at each step $n$ we remove $2^{n-1}$ intervals from $K_{n-1}$. Ordering these intervals in increasing order, we define $f_C$ to be the constant $(2j-1)/2^n$ on the $j$-th removed interval, $j=1,\ldots, 2^{n-1}$. In this way, $f_C$ is defined on $[0,1]\setminus C$,
is non-decreasing, and takes values in $[0,1]$.
We extend $f_C$ to all of $[0,1]$ by setting $f_C(0)=0$ and
\[
f_C(x)=\sup\bigl\{
f_C(t):t\in[0,1]\setminus C,\ t<x
\bigr\},
\qquad x\in C\setminus\{0\}.
\]
The resulting function $f_C$ is continuous and non-decreasing, with $f_C(0)=0$ and $f_C(1)=1$. We can extend it to $\mathbb{R}$ by continuity by setting $f_C(x)=0, x<0,$ and $f_C(x)=1,x>1.$ Then, there exists a Lebesgue--Stieltjes measure $\omega_C$ associated to $f_C$, that satisfies
\[
\omega_C((a,b])=f_C(b)-f_C(a), \qquad a\leq b.
\]
In particular, $\omega_C$ is a finite positive Borel measure that is supported on $C$ (then, in particular it is singular with respect to the Lebesgue measure), and has no atoms. Moreover, at every step $n$, on every interval $I_{n,j}$ that forms $K_n$, 
\[
\omega_C(I_{n,j})=2^{-n}.
\]
Set $d=\log2/\log3$. We show that $\omega_C$ is $d$-regular. Notice that $d$ appears in the relation between length and Cantor measure of $I_{n,j}$:
\[
|I_{n,j}|^d = (3^{-n})^d = 2^{-n}=\omega_C(I_{n,j}).
\]
We prove that 
\begin{equation*}
    \omega_C\big((x-r,x+r)\big)\asymp r^d,\qquad x\in C,\ 0<r\le 1.
\end{equation*}
Fix $x\in C$ and $0<r\le1$, and choose $k\ge0$ with $3^{-(k+1)}<r\le3^{-k}$. Then
\[
    \tfrac12\,2^{-k}\ \ge\ \tfrac12\, r^d\qquad\text{and}\qquad r^d>3^{-(k+1)d}=2^{-(k+1)} .
\]
Since $x\in C,$ in particular, $x$ belongs to one of the intervals $I_{k+1,j}$, for some $j$. Since $I_{k+1,j}$ has length $3^{-(k+1)}<r$, $I_{k+1,j}\subseteq(x-r,x+r)$. Therefore
\[
    \omega_C\bigl((x-r,x+r)\bigr)\ge\omega_C(I_{k+1,j})=2^{-(k+1)}\ge\tfrac12\,r^d .
\]
We move on to the upper bound. The interval $(x-r,x+r)$ has length at most $2\cdot3^{-k}$ and the gaps between distinct intervals $I_{k,j}$ are at least $3^{-k}$, so it meets at most three of them. Thus
\[
    \omega_C\bigl((x-r,x+r)\bigr)\le3\cdot2^{-k}<6\,r^d .
\]

We now transfer the Cantor measure on the unit circle. In general, if $\omega$ is a positive measure on $[0,1]$ and $\phi(t)=e^{it}$, let $\mu=\phi_*\omega$ be the push-forward measure,
\[
    \mu(F)=\omega\bigl(\{t\in[0,1]:\phi(t)\in F\}\bigr),\qquad\text{ for } F\subseteq\T\ \text{Borel}.
\]

\begin{proposition}\label{cantorexnam} Let $\omega_C$ be the Cantor measure on $[0,1]$, and consider the push-forward measure $\mu=\phi_*\omega_C$ on $\mathbb{T}$. Then, $\mu$ is a non-atomic probability measure with uncountable support $E=\{e^{it}:t\in C\}$, and there exists a function $b\in H^\infty_1$ such that
\[
    \mathcal{D}(\mu)=\mathcal{H}(b).
\]
\end{proposition}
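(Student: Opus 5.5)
The plan is to reduce the statement to Theorem \ref{mainthmmuprnoatom}: it suffices to show that $\mu=\phi_*\omega_C$ is $d$-regular on $\T$ with $d=\log 2/\log 3\in(0,1)$, in the sense of \eqref{promeaureg}. The remaining assertions are then routine. Since $\omega_C([0,1])=f_C(1)-f_C(0)=1$, $\mu$ is a probability measure. Since $\phi$ is injective on $[0,1]\subset[0,2\pi)$ and $\omega_C$ has no atoms, $\mu$ has no atoms. Since $\phi$ is a homeomorphism of $[0,1]$ onto an arc and $\operatorname{supp}\omega_C=C$, the support of $\mu$ is $E=\phi(C)$, which is uncountable because $C$ is.

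For regularity, I will compare chordal balls on $\T$ with intervals in $[0,1]$. For $t,s\in[0,1]$ we have $|s-t|\le 1<\pi$, so
\[
|e^{is}-e^{it}|=2\sin\tfrac{|s-t|}{2}\in\bigl[\tfrac{2}{\pi}|s-t|,\,|s-t|\bigr].
\]
Hence, for $\eta=e^{ix}$ with $x\in C$,
\[
(x-r,x+r)\cap[0,1]\ \subseteq\ \phi^{-1}\bigl(B(\eta,r)\bigr)\cap[0,1]\ \subseteq\ \bigl(x-\tfrac{\pi}{2}r,\,x+\tfrac{\pi}{2}r\bigr).
\]
Because $\omega_C$ lives on $[0,1]$, this gives
\[
\omega_C\bigl((x-r,x+r)\bigr)\le\mu\bigl(B(\eta,r)\bigr)\le\omega_C\bigl((x-\tfrac{\pi}{2}r,\,x+\tfrac{\pi}{2}r)\bigr).
\]
The lower bound $\mu(B(\eta,r))\ge \tfrac12 r^d$ for $0<r\le1$ follows directly from the estimate already proved for the Cantor measure.

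For the upper bound, the radius $\tfrac{\pi}{2}r$ may exceed $1$, so I will not use the interval estimate (valid for $r\le1$) directly. When $\tfrac{\pi}{2}r\le1$, it gives $\omega_C\le 6(\tfrac{\pi}{2})^d r^d$. When $\tfrac{\pi}{2}r>1$, I use the trivial bound $\omega_C\le 1\le(\tfrac{\pi}{2}r)^d$. Either way $\mu(B(\eta,r))\le C_0 r^d$ with $C_0=6(\pi/2)^d$, so $\mu$ is $d$-regular.

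Theorem \ref{mainthmmuprnoatom} then supplies an outer $A$ with $|A|^2(1+V_\mu)\asymp1$ and a $b\in H^\infty_1$ with $\mathcal{D}(\mu)=\mathcal{H}(b)$. By Corollary \ref{maincordmuqhbiffc2}, $b$ can even be chosen explicitly as $\beta$, with $1/\beta\in H^\infty$.

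The only real care needed is the distortion between arc length and chord, including the case of radii larger than $1$ after rescaling; everything else is bookkeeping.
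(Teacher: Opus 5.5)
Your proposal is correct and follows the same route as the paper: you prove that $\mu$ is $d$-regular with $d=\log 2/\log 3$ via the chord--arc comparison $\tfrac{2}{\pi}|s-t|\le|e^{is}-e^{it}|\le|s-t|$, and then invoke Theorem \ref{mainthmmuprnoatom}. Your explicit treatment of the rescaled radius $\tfrac{\pi}{2}r$, including the case $\tfrac{\pi}{2}r>1$, fills in a detail the paper leaves implicit when it says that regularity of $\omega_C$ transfers to $\mu$.
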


\begin{proof}
We show that $\mu$ is $d$-regular with $d=\log2/\log3$. For $s,t\in[0,1]$ we have $|e^{is}-e^{it}|=2\sin\frac{|s-t|}2$, hence
\begin{equation}\label{eq:cantorexnam1}
    \tfrac2\pi|s-t|\le|e^{is}-e^{it}|\le|s-t| .
\end{equation}
Then, since the Cantor measure $\omega_C$ is $d$-regular, so it is $\mu$.
Moreover, $\mu$ inherits from $\omega_C$ all the properties in the statement. The conclusion follows by applying Theorem \ref{mainthmmuprnoatom}.
\end{proof}

We remark that the structure of the invariant subspaces of $\mathcal{D}(\mu)$ is described in \cite[Corollary 1.1]{EEL22}.

\section{Measures concentrated on a countable set}\label{S6}

In this section we consider purely atomic measures $\mu$ and give a condition on the atoms, \eqref{assumpsummab} below, under which $\mathcal{D}(\mu)$ coincides with a space $\mathcal{H}(b)$, with $b$ written explicitly in terms of the atoms.

We introduce the setting that we will consider in the rest of the section. Let $(\zeta_n)_{n\geq 1}$ be distinct points in $\mathbb{T}$ and $(s_n)_{n\geq 1}$ a summable sequence of positive numbers. Then, the measures
\[
\sigma=\sum_{n\geq 1}s_n \delta_{\zeta_n}, \qquad \mu=\sum_{n\geq 1}s_n^2 \delta_{\zeta_n}
\]
are finite, and their support is $E=\overline{\{\zeta_n:n\ge1\}}$. Notice that $E$ can be uncountable (see Proposition \ref{uncounsptcm}). We consider the Herglotz transform of $\sigma$, that is,
\begin{equation*}
    F(z):=\int_{\mathbb{T}}\frac{\zeta+z}{\zeta-z}\operatorname{d}\!\sigma(\zeta)=\sum_ns_n\frac{\zeta_n+z}{\zeta_n-z},\qquad  z\in\mathbb{D}.
\end{equation*}
Then, the function
\begin{align*}
     u(z)&=\frac{F(z)-1}{F(z)+1}, \qquad z\in\mathbb{D},
\end{align*}
is an inner function, and its Clark measure at $1$ coincides with $\sigma$. See for example \cite[Remark 9.14]{CMR06}. Finally, set
\begin{equation*}
     a(z)=\frac{1-u(z)}2=\frac1{1+F(z)}, \qquad b(z)=\frac{1+u(z)}2=\frac{F(z)}{1+F(z)}, \qquad z\in\mathbb{D}.
\end{equation*}
Notice that $a(0)>0.$ In particular, $(b,a)$ is a Pythagorean pair that satisfies the Corona condition.

\begin{theorem}\label{mainthmcountablespt}
Let
\[
    \mu=\sum_{n\ge1}s_n^2\,\delta_{\zeta_n},\quad s_n>0,\quad\zeta_n\in\T\ \text{distinct},\quad\sum_ns_n^2<\infty,
\]
and let $E=\operatorname{supp}\mu=\overline{\{\zeta_n:n\ge1\}}$. Suppose that
\begin{equation}\label{assumpsummab}
    M:=\sup_n\sum_{j\ne n}\frac{s_j}{|\zeta_j-\zeta_n|}<\infty.
\end{equation}
Then $\sum_ns_n<\infty$. In particular, the functions $F,u,b,a$ introduced above are well-defined, and we have that
\[
    \mathcal{D}(\mu)=\mathcal{H}(b)=\mathcal M(\bar a).
\]
\end{theorem}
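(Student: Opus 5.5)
The plan is to reduce everything to Corollary \ref{cordmuhohfinniffc} and Lemma \ref{hbeqaulsmbara}.

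\emph{Summability.} Fix one index $n_0$. Since $|\zeta_j-\zeta_{n_0}|\le 2$, we get $\sum_{j\ne n_0}s_j\le 2\sum_{j\ne n_0}s_j/|\zeta_j-\zeta_{n_0}|\le 2M$. Hence $\sum_n s_n<\infty$, and $\sigma$, $F$, $u$, $a$, $b$ are well defined as described before the statement. Since $a+b=1$, Lemma \ref{hbeqaulsmbara} immediately gives $\mathcal{H}(b)=\mathcal M(\bar a)$.

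\emph{Reduction.} By Corollary \ref{cordmuhohfinniffc} it suffices to prove \eqref{dmuhohfinniffc}. We have $1-u=2/(1+F)$, so \eqref{dmuhohfinniffc} becomes $|1+F(z)|^2\asymp 1+V_\mu(z)$. Because $\operatorname{Re}F=P_\sigma\ge0$, we have $1+|F|^2\le|1+F|^2\le 2(1+|F|^2)$. It is therefore enough to show $1+|F(z)|^2\asymp 1+V_\mu(z)$, where $V_\mu(z)=\sum_j s_j^2/|z-\zeta_j|^2$.

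\emph{Key localization.} Given $z\in\mathbb D$, pick an index $n=n(z)$ with $|z-\zeta_n|\le 2\inf_j|z-\zeta_j|$. The infimum is positive, so such an $n$ exists even though the atoms may accumulate. For $j\ne n$ the triangle inequality gives $|\zeta_j-\zeta_n|\le|\zeta_j-z|+|z-\zeta_n|\le 3|\zeta_j-z|$. Together with \eqref{assumpsummab} this yields
\[
\sum_{j\ne n}\frac{s_j}{|z-\zeta_j|}\le 3M .
\]
All off-diagonal contributions are thus uniformly bounded, and only the $n$-th atom matters.

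\emph{Upper bound.} We have $|F(z)|\le 2s_n/|z-\zeta_n|+6M\le 2V_\mu(z)^{1/2}+6M$, hence $1+|F|^2\lesssim 1+V_\mu$.

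\emph{Lower bound.} First, $\sum_{j\ne n}s_j^2/|z-\zeta_j|^2\le\bigl(\sum_{j\ne n}s_j/|z-\zeta_j|\bigr)^2\le 9M^2$. It remains to control $s_n/|z-\zeta_n|$ by $1+|F(z)|$, and this is the only delicate point. The $n$-th term of $F$ has modulus $s_n|\zeta_n+z|/|\zeta_n-z|$, and the factor $|\zeta_n+z|$ could be small, so we split into two cases.
\begin{itemize}
\item If $|z-\zeta_n|\le1$, then $|\zeta_n+z|\ge 2-|\zeta_n-z|\ge1$. It follows that $s_n/|z-\zeta_n|\le|F(z)|+6M$.
\item If $|z-\zeta_n|>1$, then $s_n/|z-\zeta_n|<s_n\le\sum_j s_j$.
\end{itemize}
In either case $V_\mu(z)\lesssim 1+|F(z)|^2$, with constants depending only on $M$ and $\sum_j s_j$. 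This gives \eqref{dmuhohfinniffc}, and hence $\mathcal D(\mu)=\mathcal H(b)=\mathcal M(\bar a)$.
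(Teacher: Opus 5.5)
Your proof is correct, and it streamlines the paper's argument. Both proofs reduce to Corollary \ref{cordmuhohfinniffc} and prove $|1+F|^2\asymp 1+V_\mu$ by isolating one dominant atom and bounding the other terms through \eqref{assumpsummab}. The difference is in how the dominant atom is chosen.

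The paper surrounds each atom by a disk $U_n$ of radius $\varepsilon\rho_n$. This first requires showing that the atoms are isolated and that $s_n\le M\rho_n$ (Lemma \ref{lemconseq}(b),(c)). Outside all the disks it takes a nearest point $\eta\in E$, which may be a non-atom accumulation point. Handling that case needs the limiting estimate \eqref{accumulestimatee} of Lemma \ref{lemconseq}(d).

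You instead pick, for each $z$, an atom that is nearest up to a factor $2$. The single inequality $|\zeta_j-\zeta_n|\le 3|\zeta_j-z|$ then bounds all off-diagonal terms uniformly in $z$. You need no geometric case split, no $\rho_n$, no isolatedness of atoms, and no analysis at accumulation points.

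The factor $|\zeta_n+z|$ is absorbed in the paper through the identity $\frac{\zeta_n+z}{\zeta_n-z}=-\frac{2\zeta_n}{z-\zeta_n}-1$; you handle it by splitting into $|z-\zeta_n|\le 1$ and $|z-\zeta_n|>1$. Likewise, your use of $\operatorname{Re}F\ge 0$ to get $|1+F|^2\asymp 1+|F|^2$ replaces the paper's case analysis on $x_n\ge M_1$, which relies on $|1+F|=1/|a|\ge 1$.

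What the paper's route buys is the extra structural information in Lemma \ref{lemconseq}. Part (d) is reused later, in the remark showing that $u$ is not one-component. For the theorem itself, your argument is shorter and equally rigorous, with constants depending only on $M$ and $\sum_j s_j$.
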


Roughly, \eqref{assumpsummab} says that the mass of each atom is small compared with its distance from the other atoms, in a summable sense.

Before proving the theorem, we need the following lemma. 
\begin{lemma}\label{lemconseq}
With the same notation as above, assume \eqref{assumpsummab}. Then:
\begin{enumerate}
    \item[(a)] $S:=\sum_ns_n<\infty$.
    \item[(b)] $s_j\le M|\zeta_j-\zeta_n|$ for $j\ne n$. Every $\zeta_n$ is an isolated point of $E$, and
    \[
        \rho_n:=\operatorname{dist}(\zeta_n,E\setminus\{\zeta_n\})=\inf_{j\ne n}|\zeta_n-\zeta_j|>0,\qquad s_n\le M\rho_n,\qquad \rho_n\le2 .
    \]
    \item[(c)] $\displaystyle\sup_n\sum_{j\ne n}\frac{s_j^2}{|\zeta_j-\zeta_n|^2}\le M^2$.
    \item[(d)] For every $\eta\in E\setminus\{\zeta_n:n\ge1\}$,
    \begin{equation}\label{accumulestimatee}
        \sum_j\frac{s_j}{|\zeta_j-\eta|}\le M .
    \end{equation}
\end{enumerate}
\end{lemma}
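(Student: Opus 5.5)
We prove the four items in the order (b), (a), (c), (d), since (a) and (c) follow quickly from (b) and the definition of $M$.

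For (b), fix $j\ne n$. The sum defining $M$ for the index $n$ contains the nonnegative term $s_j/|\zeta_j-\zeta_n|$, so $s_j\le M|\zeta_j-\zeta_n|$. Swapping roles (use the sum at index $j$, which contains $s_n/|\zeta_n-\zeta_j|$), we get $s_n\le M|\zeta_n-\zeta_j|$ for every $j\ne n$. Taking the infimum over $j\neq n$ gives $s_n\le M\rho_n$ with $\rho_n:=\inf_{j\ne n}|\zeta_n-\zeta_j|$. Since $s_n>0$, this forces $M>0$ and $\rho_n\ge s_n/M>0$. Hence no other atom lies within distance $\rho_n$ of $\zeta_n$. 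Because $E$ is the closure of the atoms, $\operatorname{dist}(\zeta_n,E\setminus\{\zeta_n\})=\rho_n>0$, so $\zeta_n$ is isolated in $E$. Finally $\rho_n\le 2=\operatorname{diam}\T$. If there is only one atom, the statement is trivial with $\rho_1$ interpreted as $2$; we assume at least two atoms.

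For (a), pick any two distinct atoms, say $\zeta_1\neq\zeta_2$. Since $|\zeta_j-\zeta_1|\le 2$, we have $\sum_{j\ne1}s_j\le 2\sum_{j\ne1}s_j/|\zeta_j-\zeta_1|\le 2M$. Therefore $S\le s_1+2M<\infty$.

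For (c), write each term as a product:
\[
\sum_{j\ne n}\frac{s_j^2}{|\zeta_j-\zeta_n|^2}=\sum_{j\ne n}\frac{s_j}{|\zeta_j-\zeta_n|}\cdot\frac{s_j}{|\zeta_j-\zeta_n|}.
\]
By (b), the second factor is at most $M$. The remaining sum is at most $M$, so the total is at most $M^2$. Alternatively, use $\sum x_j^2\le(\sum x_j)^2$ for $x_j\ge0$.

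For (d), which is the only step needing a limiting argument, let $\eta\in E\setminus\{\zeta_n\}$. Then $\eta$ is an accumulation point of atoms, so there is a subsequence of distinct atoms $\zeta_{n_k}\to\eta$ with $n_k\to\infty$. Fix a finite set $J$ of indices. For $k$ large, $n_k\notin J$, so the definition of $M$ gives
\[
\sum_{j\in J}\frac{s_j}{|\zeta_j-\zeta_{n_k}|}\le M.
\]
Each $\zeta_j$ differs from $\eta$, because $\eta$ is not an atom. Hence each term converges to $s_j/|\zeta_j-\eta|$, and letting $k\to\infty$ yields $\sum_{j\in J}s_j/|\zeta_j-\eta|\le M$. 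Taking the supremum over finite $J$ (Fatou's lemma for the counting measure) gives \eqref{accumulestimatee}. The only delicate point is ensuring $\zeta_j\neq\eta$ so the limits are finite, and this is exactly the hypothesis on $\eta$.
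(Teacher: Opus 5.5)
Your proof is correct and follows the paper's argument essentially step for step: the same termwise bound from the definition of $M$ gives (a)--(c), and the same finite-subset limiting argument along atoms $\zeta_{n_k}\to\eta$ gives (d); only the order of presentation differs. One small slip is your aside about a single atom, which is not right, since then $M=0$ and $s_1\le M\rho_1$ fails with $\rho_1=2$; this case is excluded anyway, because $(\zeta_n)_{n\ge1}$ is an infinite sequence of distinct points.
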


\begin{proof}
We prove the four items separately.
\begin{itemize}
    \item[(a)] Since $|\zeta_j-\zeta_n|\le2$, for any fixed $n$ we get $\sum_{j\ne n}s_j\le2\sum_{j\ne n}\frac{s_j}{|\zeta_j-\zeta_n|}\le2M$, hence $S\le2M+s_n<\infty$.
    \item[(b)] Each term of the sum in \eqref{assumpsummab} is at most $M$, i.e.\ $s_n\le M|\zeta_j-\zeta_n|$ for all $n\ne j$. In particular, we obtain that $\inf_{j\neq n}|\zeta_n-\zeta_j|\ge s_n/M$. Now $E=\{\zeta_n\}\cup\overline{\{\zeta_j:j\ne n\}}$, and every point of $\overline{\{\zeta_j:j\ne n\}}$ is at distance at least $s_n/M$ from $\zeta_n$ (as a limit of points with this property). Hence $B(\zeta_n,s_n/M)\cap E=\{\zeta_n\}$, so $\zeta_n$ is isolated, $E\setminus\{\zeta_n\}=\overline{\{\zeta_j:j\ne n\}}$ and the formula for $\rho_n$ follows. The bounds $s_n\le M\rho_n$ and $\rho_n\le2$ are now clear.
    \item[(c)] By (b), each term satisfies $\frac{s_j^2}{|\zeta_j-\zeta_n|^2}\le M\frac{s_j}{|\zeta_j-\zeta_n|}$, hence the conclusion.
    \item[(d)] Since $\eta\in E$ is not an atom, there are distinct atoms $\zeta_{n_k}$ converging to $\eta$. Fix a finite set $J$ of indices. For $k$ large, $n_k\notin J$, and by \eqref{assumpsummab},
\[
\sum_{j\in J}\frac{s_j}{|\zeta_j-\zeta_{n_k}|}\le M.
\]
Letting $k\to\infty$ gives $\sum_{j\in J}\frac{s_j}{|\zeta_j-\eta|}\le M$. We conclude taking the supremum over $J$. \qedhere
\end{itemize}
\end{proof}

We are in a position to prove the main theorem of the section.

\begin{proof}[Proof of Theorem \ref{mainthmcountablespt}]
By Lemma \ref{lemconseq}(a), $S=\sum_{n\geq 1}s_n<\infty$, so that the Herglotz transform of $\sigma=\sum_{n\geq 1}s_n\delta_{\zeta_n}$, i.e.,
\[
 F(z)=\sum_{n\geq 1}s_n\frac{\zeta_n+z}{\zeta_n-z},\qquad  z\in\mathbb{D},
\]
produces an analytic function on $\mathbb{D}$, and $u=(F-1)/(F+1)$ is an inner function, whose Clark measure $\sigma_1$ coincides with $\sigma.$ Moreover, the functions 
\begin{equation*}
     a(z)=\frac{1-u(z)}2=\frac1{1+F(z)}, \qquad b(z)=\frac{1+u(z)}2=\frac{F(z)}{1+F(z)}, \qquad z\in\mathbb{D},
\end{equation*}
are both outer and positive at $0$, and satisfy $a+b=1$. In particular, $(b,a)$ is a Pythagorean pair that satisfies the Corona condition. Since 
\[
|a|^2(1+V_\mu)=\frac{1+V_\mu}{|1+F|^2},
\]
in order to apply Corollary \ref{cordmuhohfinniffc} we have to show that $|1+F|^2\asymp1+V_\mu$ on $\mathbb D$. We will introduce a family of disjoint disks, and consider the different cases, whether $z$ lies in such a disk or not.

Fix $0<\varepsilon<\tfrac14$, and consider the disks 
\begin{equation*}  
U_n=\{z\in\C:|z-\zeta_n|<\varepsilon\rho_n\}, \qquad n\geq 1,
\end{equation*}
with $ \rho_n=\operatorname{dist}(\zeta_n,E\setminus\{\zeta_n\})=\inf_{j\ne n}|\zeta_n-\zeta_j|>0$ as in Lemma \ref{lemconseq}(b). 

These disks are pairwise disjoint: for $z\in U_n$ and $j\ne n$, using $\rho_n\le|\zeta_n-\zeta_j|$,
\begin{equation}\label{Unlower}
    |z-\zeta_j|\ge|\zeta_n-\zeta_j|-\varepsilon\rho_n\ge(1-\varepsilon)|\zeta_n-\zeta_j| > \frac34\rho_j.
\end{equation}

\textit{Case 1: $z\in U_n\cap\mathbb{D}$ for some $n\geq 1$.} 
We begin with estimating the potential
\[
V_\mu(z)=\frac{s_n^2}{|z-\zeta_n|^2}+\sum_{j\ne n}\frac{s_j^2}{|z-\zeta_j|^2}.
\]
By \eqref{Unlower} and Lemma \ref{lemconseq}(c),
\[
    \sum_{j\ne n}\frac{s_j^2}{|z-\zeta_j|^2}\le\frac{M^2}{(1-\varepsilon)^2}.
\]

We conclude that
\begin{equation}\label{eq:mainthmcountablespt2}
1+V_\mu(z) \asymp 1+ \frac{s_n^2}{|z-\zeta_n|^2}, \qquad z\in U_n\cap\mathbb{D}.    
\end{equation}

We want analogous estimates for $|1+F|^2$. Write \[
F(z)=s_n\frac{\zeta_n+z}{\zeta_n-z}+\sum_{j\ne n}s_j\frac{\zeta_j+z}{\zeta_j-z},
\]
and denote $F_n$ the second summand above. Since $\frac{\zeta_n+z}{\zeta_n-z}=-\frac{2\zeta_n}{z-\zeta_n}-1$, we get
\begin{equation}\label{eq:mainthmcountablespt1}
    1+F(z)=-\frac{2s_n\zeta_n}{z-\zeta_n}+H_n(z),\qquad H_n=1-s_n+F_n.
\end{equation}
    
We estimate $|H_n|$. First, notice that by \eqref{Unlower} and \eqref{assumpsummab},
\[
\begin{aligned}
    |F_n(z)|&\le\frac2{1-\varepsilon}\sum_{j\ne n}\frac{s_j}{|\zeta_j-\zeta_n|}\le\frac{2M}{1-\varepsilon}<\frac{8M}3.
\end{aligned}
\]
By Lemma \ref{lemconseq}(b), we have that $s_n\le M\rho_n\le2M$. It follows that
\[
|H_n(z)|\le1+2M+\tfrac83M\le 1+5M =: M_1.
\]
Then, by \eqref{eq:mainthmcountablespt1}, we proved that
\begin{equation*}
    \bigg|\frac{2s_n}{|z-\zeta_n|}- |H_n(z)|\bigg| \leq |1+F(z)|\leq \frac{2s_n}{|z-\zeta_n|}+ M_1, \qquad z\in U_n\cap\mathbb D.
\end{equation*}

We obtain the upper estimate 
\[
|1+F(z)|^2\lesssim \frac{s_n^2}{|z-\zeta_n|^2}+ 1.
\]

For the lower estimate, set $x_n:=s_n/|z-\zeta_n|$. We distinguish between two cases:
\begin{itemize}
    \item[(i)] If $x_n\ge M_1$ (note $M_1\ge1$), then $|1+F(z)|\ge x_n\ge1$ and $x_n^2\ge\frac12(1+x_n^2)$. Thus,
    \[
    |1+F(z)|^2\geq \frac12+\frac{s_n^2}{2|z-\zeta_n|^2}.
    \]
    \item[(ii)] Recall that $|1+F(z)|=|a(z)|^{-1}\ge1$ for $z\in\mathbb D$. If $x_n< M_1$, then
\[
|1+F|^2\ge1\ge\frac{1+x_n^2}{1+M_1^2}.
\]
\end{itemize}

We conclude that
\[
    |1+F(z)|^2\asymp1+x_n^2=1+\frac{s_n^2}{|z-\zeta_n|^2},\qquad z\in U_n\cap\mathbb D .
\]
Therefore, by \eqref{eq:mainthmcountablespt2},  $|1+F|^2\asymp1+V_\mu$ on $U_n\cap\mathbb D$.

\textit{Case 2: $z$ outside of all the disks $U_n$.}  Let $z\in\mathbb D\setminus\bigcup_nU_n$ and let $\eta\in E=\operatorname{supp}(\mu)$ be a point nearest to $z$ ($E$ is compact). For every $j\geq 1$, 
\[
|\zeta_j-\eta|\le|\zeta_j-z|+|z-\eta|\le2|\zeta_j-z|,
\]
because $|z-\eta|=\operatorname{dist}(z,E)\le|z-\zeta_j|$. There are two cases.
\begin{itemize}
    \item If $\eta$ is not an atom, then by \eqref{accumulestimatee}, \[
    \sum_{j\geq 1}\frac{s_j}{|z-\zeta_j|}\le2\sum_{j\geq 1}\frac{s_j}{|\zeta_j-\eta|}\le2M.
    \]
    \item If $\eta=\zeta_n$, the same bound gives 
    \[
    \sum_{j\ne n}\frac{s_j}{|z-\zeta_j|}\le2\sum_{j\ne n}\frac{s_j}{|\zeta_j-\zeta_n|}\le2M,
    \]
    while the remaining term satisfies \[
    \frac{s_n}{|z-\zeta_n|}\le\frac{s_n}{\varepsilon\rho_n}\le\frac M\varepsilon,
    \]
    since $z\notin U_n$ and $s_n\leq M\rho_n$, by Lemma \ref{lemconseq}(b).
\end{itemize}
In both cases $\sum_{j\geq 1}\frac{s_j}{|z-\zeta_j|}\le K_0:=2M+\frac M\varepsilon$. Consequently,
\[
    |F(z)|\le\sum_{j\geq 1}\frac{2s_j}{|z-\zeta_j|}\le2K_0,\qquad V_\mu(z)=\sum_{j\geq 1}\frac{s_j^2}{|z-\zeta_j|^2}\le\Bigl(\sum_{j\geq 1}\frac{s_j}{|z-\zeta_j|}\Bigr)^2\le K_0^2 .
\]
So $1\le|1+F|\le1+2K_0$ and $1\le1+V_\mu\le1+K_0^2$, and again $|1+F|^2\asymp1+V_\mu$.

 We found that $|1+F|^2\asymp1+V_\mu$ uniformly on the whole disk $\mathbb D$, with constants depending only on $M$ and $\varepsilon$. Hence
\[
    |a(z)|^2\bigl(1+V_\mu(z)\bigr)=\frac{1+V_\mu(z)}{|1+F(z)|^2}\asymp1,\qquad z\in\mathbb D .
\]
Corollary \ref{cordmuhohfinniffc} now gives $\mathcal{D}(\mu)=\mathcal M(\bar a)=\mathcal{H}(b)$.
\end{proof}

We remark that the setting is very similar to the one considered in Theorem 2.3 of \cite{BDHM26}. However, those results cannot be applied here, because $u$ is not one-component:

\begin{remark}
    Consider $\mu$ as in Theorem \ref{mainthmcountablespt}, and assume that \eqref{assumpsummab} holds. Suppose by contradiction that $u$ is one-component. 
    
    Consider $\Lambda\ne\varnothing$, the set of accumulation points of the atoms of its Clark measure $\sigma$. By \cite[Theorem 1]{Bes15}, every atom has two neighbouring atoms, and its mass is bounded below by a fixed positive multiple of either neighbouring distance. In a component of $\T\setminus\Lambda$ containing an atom, one can therefore choose consecutive atoms $\xi_k=pe^{it_k}$, $k\geq 1$, approaching an endpoint $p\in\Lambda$, with $0<t_{k+1}<t_k<\pi$ and $t_k\to0$; the opposite orientation is treated identically. Writing $w_k=\sigma_1(\{\xi_k\})$, Bessonov's lower estimate gives
\[
    \frac{w_k}{|\xi_k-p|}\gtrsim\frac{t_k-t_{k+1}}{t_k}.
\]
Setting $q_k=1-t_{k+1}/t_k,k\geq 1$, we claim that $\sum_{k\geq 1}q_k=+\infty$. If this is true, then in particular
\[
    \sum_{k\geq 1}\frac{w_k}{|\xi_k-p|}=\infty,
\]
contradicting Lemma \ref{lemconseq}(d).

Assume that $\sum_{k\geq 1}q_k<+\infty$. Then, in particular, $q_k$ and $x_k:=-\log(1-q_k)$ are two sequences of positive numbers such that $\lim_k x_k/q_k=1$. However,
\[
 \sum_{k=1}^m-\log(1-q_k)=\sum_{k=1}^m \big(\log(t_k)-\log(t_{k+1})\big)= \log\frac{t_1}{t_{m+1}}\xrightarrow[m\to\infty]{}+\infty,
\]
which is a contradiction.
\end{remark}

\subsection{An example}
The following result gives a measure satisfying the hypotheses of Theorem \ref{mainthmcountablespt} with uncountable support.

\begin{proposition}\label{uncounsptcm}
Let $C\subset[0,1]$ be the middle-third Cantor set and let $\mathcal G$ be the (countable) family of all open intervals removed in its construction. For $I\in\mathcal G$ write
\[
    \ell_I=|I|,\qquad c_I=\text{the midpoint of }I,\qquad\zeta_I=e^{ic_I}.
\]
Define
\begin{equation*}
    \mu=\sum_{I\in\mathcal G}\ell_I^4\,\delta_{\zeta_I},\qquad
    F(z)=\sum_{I\in\mathcal G}\ell_I^2\,\frac{\zeta_I+z}{\zeta_I-z},\qquad a=\frac1{1+F},\qquad b=\frac F{1+F}.
\end{equation*}
Then $\operatorname{supp}\mu=\{e^{it}:t\in C\}\cup\{\zeta_I:I\in\mathcal G\}$ is uncountable and
\[
    \mathcal{D}(\mu)=\mathcal M(\bar a)=\mathcal{H}(b).
\]
\end{proposition}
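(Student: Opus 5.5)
The plan is to apply Theorem \ref{mainthmcountablespt} with the atoms $\zeta_I$ and the weights $s_I=\ell_I^2$. The two statements match: the Herglotz transform of $\sigma=\sum_I s_I\delta_{\zeta_I}$ is exactly $F$, and $\mu=\sum_I s_I^2\delta_{\zeta_I}$. So everything reduces to checking two things: condition \eqref{assumpsummab}, and the description of the support. The $\zeta_I$ are distinct, because the midpoints $c_I$ are distinct points of $[0,1]$ and $t\mapsto e^{it}$ is injective on $[0,1]$. By \eqref{eq:cantorexnam1}, distances on the circle are comparable to distances in $[0,1]$, so all estimates can be done on the real line.

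\emph{Support.} The set $\{c_I\}$ has closure $\{c_I\}\cup C$. Every point of $C$ is a limit of midpoints of removed intervals: at generation $n$, a removed interval lies within $3^{-n}$ of any point of the corresponding $K_n$-interval. Conversely, any limit point of distinct $c_I$ must lie in $C$, since the removed intervals have lengths tending to $0$ and their endpoints are in $C$. Applying $\phi$ gives $\operatorname{supp}\mu=\{e^{it}:t\in C\}\cup\{\zeta_I\}$, which is uncountable because $C$ is.

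\emph{Condition \eqref{assumpsummab}.} This is the main obstacle. Fix $J\in\mathcal G$ of generation $m$, so $\ell_J=3^{-m}$. We need to bound
\[
\sum_{I\neq J}\frac{\ell_I^2}{|c_I-c_J|}
\]
uniformly in $J$. The key geometric fact is that for $I\neq J$ we have $|c_I-c_J|\ge \tfrac12\max(\ell_I,\ell_J)$. Indeed, $I$ and $J$ are disjoint open intervals, so the distance between their midpoints is at least half the sum of their lengths. Now split by the generation $n$ of $I$, for which there are $2^{n-1}$ intervals of length $3^{-n}$.

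\begin{itemize}
\item \emph{Case $n\le m$.} Each term is at most $\ell_I^2/(\ell_I/2)=2\ell_I$. Summing over all $I$ gives at most $2\sum_n 2^{n-1}3^{-n}=2$.
\item \emph{Case $n>m$.} The bound $2\ell_I^2/\ell_J$ is too crude for the intervals near $J$, so group these $I$ by their distance to $c_J$. Only boundedly many generation-$n$ intervals lie within distance $r$ of $c_J$ when $r\sim 3^{-k}$ is small, namely $\lesssim 2^{n-k}$ of them for $k\le n$. Since all such $I$ lie outside $J$, they are at distance $\ge \ell_J/2$. A dyadic-in-distance (triadic) count gives, for each $n$,
\[
\sum_{I\ \text{of generation } n}\frac{\ell_I^2}{|c_I-c_J|}\lesssim \sum_{k\le m}\frac{2^{n-k}\,9^{-n}}{3^{-k}}\lesssim (2/9)^n\,(3/2)^m .
\]
Summing over $n>m$ gives $\lesssim (2/9)^m(3/2)^m=3^{-m}\le 1$.
\end{itemize}

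Both cases give a uniform bound, so $M<\infty$. The counting in the second case (how many generation-$n$ intervals lie within $3^{-k}$ of a point, namely about $2^{n-k}$) is the delicate point. I would verify it from the self-similar structure: a ball of radius $3^{-k}$ meets at most two or three $K_k$-intervals, and each of these contains $2^{n-k-1}$ removed intervals of generation $n$.

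With $M<\infty$ established, Theorem \ref{mainthmcountablespt} yields $\mathcal D(\mu)=\mathcal M(\bar a)=\mathcal H(b)$.
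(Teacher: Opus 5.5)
Your proposal is correct and follows the same route as the paper: apply Theorem~\ref{mainthmcountablespt} with $s_I=\ell_I^2$, identify $\overline{\{c_I\}}=C\cup\{c_I\}$, and verify \eqref{assumpsummab}. Your Case $n>m$ is unnecessary, and the bound you called too crude is not. Since $|c_I-c_J|\ge \ell_I/2$ for every $I\ne J$ regardless of generation, each term is at most $2\ell_I$, so your Case $n\le m$ sum already bounds the whole series; this is exactly the paper's one-line argument, giving $M\le\pi$ (even $2\ell_I^2/\ell_J$ would suffice, since $\sum_{n>m}2^n9^{-n}3^m=\tfrac27(2/3)^m$).
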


\begin{proof}
At step $k\ge1$ of the construction of $C$ there are $2^{k-1}$ removed intervals, of length $3^{-k}$. Hence
\[
    \mu(\T)=\sum_{k\ge1} 3^{-4k} 2^{k-1}=\frac12\sum_{k\ge1}\Bigl(\frac2{81}\Bigr)^k
    <\infty,
    \qquad \sum_{I\in\mathcal G}\ell_I=\sum_{k\ge1}2^{k-1}3^{-k}=1 .
\]
The points $\zeta_I$ are distinct, and $\mu$ has the form required in Theorem \ref{mainthmcountablespt} with $s_I=\ell_I^2$. We claim that
\[
\overline{\{c_I:I\in\mathcal G\}}=C\cup\{c_I:I\in\mathcal G\}.
\]
We show that $C\subseteq \overline{\{c_I:I\in\mathcal G\}}$, establishing one inclusion. Every $t\in C$ lies in a nested sequence of intervals with length $3^{-n}$, and each of them contains the removed middle third, hence a midpoint $c_I$; these points $c_I$ accumulate at $t$.

We show the other inclusion, by proving that $\overline{\{c_I:I\in\mathcal G\}}\setminus \{c_I:I\in\mathcal G\}\subseteq C.
$ The set of removed intervals of length $\ge\varepsilon$ is finite for each $\varepsilon>0$, so if $c_{I_k}\to t$ with distinct $I_k\in\mathcal{G}$, then $\ell_{I_k}\to0$. The endpoints of $I_k$ belong to $C$ and their distance from $c_{I_k}$ is  $\ell_{I_k}/2$, hence $t\in \overline{C}=C$, proving the claim.

The map $\phi(t)=e^{it}$  on the compact set $[0,1]$ is a homeomorphism onto its image, so it commutes with closures. As $\mu$ is purely atomic with all atoms of positive mass,
\[
    \operatorname{supp}\mu=\overline{\{\zeta_I\}}=\{e^{it}:t\in C\}\cup\{\zeta_I:I\in\mathcal G\},
\]
which is uncountable.

Now we prove that $\mu$ satisfies condition \eqref{assumpsummab}. Let $I\ne J$ in $\mathcal G$. The intervals are disjoint, so $c_J\notin I$ and $|c_I-c_J|\ge\ell_I/2$. Since $c_I,c_J\in[0,1]$, $|c_I-c_J|\le1<\pi$ and $2\sin(y/2)\ge\frac2\pi y$ for $y\in[0,\pi]$, we have that
\[
    |\zeta_I-\zeta_J|=2\sin\frac{|c_I-c_J|}2\ge\frac2\pi|c_I-c_J|\ge\frac{\ell_I}\pi .
\]
Consequently, for every $J\in\mathcal G$,
\[
    \sum_{I\ne J}\frac{s_I}{|\zeta_I-\zeta_J|}\le\pi\sum_{I\ne J}\frac{\ell_I^2}{\ell_I}\le\pi\sum_{I\in\mathcal G}\ell_I=\pi .
\]
So \eqref{assumpsummab} holds with $M\le\pi$, and Theorem \ref{mainthmcountablespt} gives $\mathcal{D}(\mu)=\mathcal M(\bar a)=\mathcal{H}(b)$.

\end{proof}

\section{Measures with atoms at accumulation points}\label{S7}

In the previous section, the atoms of $\mu$ were isolated points of the support. Here we add to such a measure finitely many atoms \emph{placed at accumulation points of the other atoms}.
Write the resulting measure as
$\mu=\nu+\sum_{\ell=1}^N c_\ell\delta_{\eta_\ell}$,
where $\nu$ is the original measure.
The measure $\mu$ no longer satisfies the hypotheses of Theorem \ref{mainthmcountablespt}, but the construction can be reduced to that setting by replacing $\nu$ with the weighted measure $|P|^2\nu$, where $P$ vanishes at the new atoms, see Lemma \ref{potenvmucpvnutl}.
A new phenomenon appears: $\mathcal{D}(\mu)$ is still a space $\mathcal{H}(b)$, but $b$ can no longer be of the form $(1+u)/2$ with $u$ inner, as it was in Theorem \ref{mainthmcountablespt}. We begin with a lemma. 
\begin{lemma}\label{potenvmucpvnutl}
Let $\nu$ be a finite positive Borel measure on $\T$. For $N\geq 1$, take distinct points $\eta_1,\dots,\eta_N\in\T$ and masses $c_1,\dots,c_N>0$. Set
\[
    \mu=\nu+\sum_{\ell=1}^Nc_\ell\,\delta_{\eta_\ell},\qquad P(z)=\prod_{\ell=1}^N(1-\overline{\eta_\ell}z),\qquad \operatorname{d}\!\tilde\nu(\xi)=|P(\xi)|^2\operatorname{d}\!\nu(\xi).
\]
Then,
\begin{equation}\label{pvmuvnutilta}
    |P(z)|^2\bigl(1+V_\mu(z)\bigr)\asymp1+V_{\tilde\nu}(z),\qquad z\in\mathbb D .
\end{equation}
\end{lemma}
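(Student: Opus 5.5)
We plan to expand both sides and compare term by term. Since $V$ is linear in the measure,
\[
|P(z)|^2\bigl(1+V_\mu(z)\bigr)=|P(z)|^2+|P(z)|^2V_\nu(z)+\sum_{\ell=1}^N c_\ell\frac{|P(z)|^2}{|z-\eta_\ell|^2}.
\]
On the other side,
\[
V_{\tilde\nu}(z)=\int_{\T}\frac{|P(\xi)|^2}{|z-\xi|^2}\operatorname{d}\!\nu(\xi).
\]
The key comparison is between $|P(\xi)|^2$ (inside the integral) and $|P(z)|^2$ (outside). We carry it out one factor at a time, using the elementary identity $|1-\overline{\eta}w|=|w-\eta|$ for $w\in\overline{\D}$ and $\eta\in\T$.

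For the upper bound we use $|z-\eta_\ell|\le |z-\xi|+|\xi-\eta_\ell|$ for each factor, which gives
\[
|P(z)|\le \prod_\ell\bigl(|z-\xi|+|\xi-\eta_\ell|\bigr).
\]
Expanding, $|P(z)|^2/|z-\xi|^2$ is bounded by a finite sum of terms $|z-\xi|^{2k-2}\prod_{\ell\notin S}|\xi-\eta_\ell|^2$ over subsets $S$ of size $k$. The term with $k=0$ is $\lesssim |P(\xi)|^2/|z-\xi|^2$. The terms with $k\ge1$ are bounded, since all distances are at most $2$, so they integrate against $\nu$ to at most a multiple of $\nu(\T)$. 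Hence $|P|^2V_\nu\lesssim 1+V_{\tilde\nu}$. The remaining terms are easy: $|P(z)|^2\le 2^{2N}$ is bounded, and $|P(z)|^2/|z-\eta_\ell|^2=\prod_{j\ne\ell}|z-\eta_j|^2$ is bounded. Together these give $|P|^2(1+V_\mu)\lesssim 1+V_{\tilde\nu}$.

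For the lower bound we need $1+V_{\tilde\nu}\lesssim |P|^2(1+V_\mu)$. By the symmetric inequality $|\xi-\eta_\ell|\le|\xi-z|+|z-\eta_\ell|$, the same expansion gives
\[
V_{\tilde\nu}(z)\lesssim |P(z)|^2V_\nu(z)+\sum_{S\ne\varnothing}\ \prod_{\ell\notin S}|z-\eta_\ell|^2\int |z-\xi|^{2|S|-2}\operatorname{d}\!\nu \lesssim |P(z)|^2V_\nu(z)+1.
\]
So the whole matter reduces to showing $1\lesssim |P(z)|^2\bigl(1+\sum_\ell c_\ell|z-\eta_\ell|^{-2}\bigr)$, and this is the main obstacle, since $P$ vanishes at the $\eta_\ell$. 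We handle it by a case split. Let $\delta=\tfrac12\min_{j\ne\ell}|\eta_j-\eta_\ell|>0$. If $z$ is at distance at least $\delta$ from all $\eta_\ell$, then $|P(z)|\ge\delta^N$ and we are done. If instead $|z-\eta_\ell|<\delta$ for some $\ell$, then
\[
|P(z)|^2c_\ell|z-\eta_\ell|^{-2}=c_\ell\prod_{j\ne\ell}|z-\eta_j|^2\ge c_\ell\delta^{2(N-1)}.
\]
This is exactly where the positivity of the masses $c_\ell$ is used. Combining the upper and lower bounds yields \eqref{pvmuvnutilta}, with constants depending only on $N$, $\delta$, the $c_\ell$ and $\nu(\T)$.
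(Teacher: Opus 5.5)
Your proof is correct and follows essentially the paper's route. You isolate the atomic terms $c_\ell|P(z)|^2/|z-\eta_\ell|^2=c_\ell|R_\ell(z)|^2$, with $R_\ell=P/(1-\overline{\eta_\ell}z)$, and use that $P,R_1,\dots,R_N$ have no common zero on $\overline{\D}$; you make this quantitative through the separation $\delta$ (any $\delta>0$ works when $N=1$), whereas the paper simply invokes compactness. You then compare $|P(\xi)|^2$ with $|P(z)|^2$ up to an error that stays bounded after division by $|z-\xi|^2$. The only real difference is in that last step: instead of your factor-by-factor triangle-inequality expansion, the paper uses the single Lipschitz bound $|P(\xi)-P(z)|\le L|\xi-z|$, with $L=\sup_{\overline{\D}}|P'|$, together with $|P(\xi)|^2\le 2|P(\xi)-P(z)|^2+2|P(z)|^2$ (and its symmetric counterpart), which is somewhat shorter.
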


\begin{proof}
For $\ell=1,\ldots,N$, consider the polynomial $R_\ell(z)=P(z)/(1-\overline{\eta_\ell}z)=\prod_{j\ne\ell}(1-\overline{\eta_j}z)$. The family of polynomials $\{P,R_1,\ldots, R_N\}$ does not have a common zero. In particular, 
\[
|P(z)|^2+\sum_{\ell=1}^Nc_\ell|R_\ell(z)|^2\asymp1,\qquad z\in\overline{\mathbb D}.
\]
Since $V_\mu=\sum_{\ell=1}^N c_\ell|z-\eta_\ell|^{-2}+V_\nu$, this gives
\begin{align}
\notag |P(z)|^2\bigl(1+V_\mu(z)\bigr)&=|P(z)|^2+\sum_{\ell=1}^Nc_\ell|R_\ell(z)|^2+|P(z)|^2V_\nu(z)\\
\label{pvumcompvnu}& \asymp1+|P(z)|^2V_\nu(z).
\end{align}

Now, let $L:=\sup_{\overline{\mathbb D}}|P'|<\infty$. Then $|P(\xi)-P(z)|\le L|\xi-z|$ for $\xi,z\in\overline{\mathbb D}$. Moreover,
\[
\begin{aligned}
V_{\tilde\nu}(z)&=\int_{\T}\frac{|P(\xi)|^2}{|\xi-z|^2}\operatorname{d}\!\nu(\xi)\\
&\le2\int_{\T}\frac{|P(\xi)-P(z)|^2}{|\xi-z|^2}\operatorname{d}\!\nu(\xi)+2|P(z)|^2\int_{\T}\frac{\operatorname{d}\!\nu(\xi)}{|\xi-z|^2}\\
&\le2L^2 \nu(\T)+2|P(z)|^2V_\nu(z).
\end{aligned}
\]
In the same way, exchanging the roles of $P(\xi)$ and $P(z)$,
\[
    |P(z)|^2V_\nu(z)\le2V_{\tilde\nu}(z)+2L^2\nu(\T).
\]
Hence $1+|P|^2V_\nu\asymp1+V_{\tilde\nu}$, and together with \eqref{pvumcompvnu} this proves \eqref{pvmuvnutilta}.
\end{proof}

We specialize the previous lemma to the measures considered in Section \ref{S6}. In particular, for the rest of the section we will fix the measure $\nu$ given by $\nu=\sum_{n\ge1}s_n^2\,\delta_{\zeta_n}$, with $\zeta_n\in\T$ distinct and $s_n>0$ such that $\sum_ns_n^2<\infty$. We denote $E$ the support of $\nu$. 

For $N\in\mathbb{N}$, we consider distinct points $\eta_1,\dots,\eta_N$ in $E\setminus\{\zeta_n:n\ge1\}$ and masses $c_1,\ldots,c_N>0$. We set $P(z)=\prod_{\ell=1}^N(1-\overline{\eta_\ell}z)$. We define a new measure via
\begin{equation}\label{eq:defmusection8}
    \mu=\nu+\sum_{\ell=1}^Nc_\ell\,\delta_{\eta_\ell}.
\end{equation}
    
The goal of the next theorem is to give a sufficient condition that ensures that $\mathcal{D}(\mu)$ coincides with a de Branges--Rovnyak space.

\begin{theorem}\label{mainthmcounsprtaccpt} Consider a measure $\nu=\sum_{n\ge1}s_n^2\,\delta_{\zeta_n}$ as above, $N\ge1$, distinct points $\eta_1,\dots,\eta_N$ in $E\setminus\{\zeta_n:n\ge1\}$, masses $c_1,\ldots,c_N>0$, and the polynomial $P$ as above. Consider the measure $\mu$ as in \eqref{eq:defmusection8}, and set
\[
    t_n:=s_n|P(\zeta_n)|=s_n\prod_{\ell=1}^N|\zeta_n-\eta_\ell|,\quad n\geq 1,\qquad\tilde\nu=\sum_{n=1}^\infty t_n^2\,\delta_{\zeta_n}.
\]
 
Suppose that
\begin{equation}\label{summabiltywatap}
    M:=\sup_n\sum_{k\ne n}\frac{t_k}{|\zeta_k-\zeta_n|}<\infty .
\end{equation}

Finally, set
\[
    G(z)=\sum_nt_n\frac{\zeta_n+z}{\zeta_n-z},\qquad a(z)=\frac{P(z)}{2^{N+1}\bigl(1+G(z)\bigr)} .
\]
Then $a$ is outer, $a(0)>0$ and $\|a\|_\infty\le\frac12$. Let $b$ be the Pythagorean mate of $a$. Then
\[
    \mathcal{D}(\mu)=\mathcal M(\bar a)=\mathcal{H}(b).
\]
Moreover,
\[
    \mathcal{D}(\mu)\ne\mathcal H\bigl((1+u)/2\bigr)\qquad\text{for every inner function }u .
\]
\end{theorem}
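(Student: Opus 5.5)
We reduce everything to Section \ref{S6} applied to the weighted measure $\tilde\nu=|P|^2\nu=\sum_n t_n^2\delta_{\zeta_n}$, and then use Lemma \ref{potenvmucpvnutl} and Corollary \ref{maincordmuqhbiffc2}.

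\textbf{Step 1: the function $a$.} Hypothesis \eqref{summabiltywatap} is exactly \eqref{assumpsummab} for $\tilde\nu$ with $s_n$ replaced by $t_n$. So Lemma \ref{lemconseq} gives $\sum_n t_n<\infty$, and $G$ is a well-defined Herglotz transform with $\operatorname{Re}G\ge0$. Hence $1/(1+G)$ is outer with modulus at most $1$. The polynomial $P$ has all its zeros on $\T$, so it is outer, $P(0)=1$, and $|P|\le 2^N$. Therefore $a$ is outer, $\|a\|_\infty\le 2^N/2^{N+1}=\frac12$, and $a(0)=1/(2^{N+1}(1+G(0)))>0$ because $G(0)=\sum t_n>0$.

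\textbf{Step 2: the potential estimate.} The proof of Theorem \ref{mainthmcountablespt}, run with $t_n$ in place of $s_n$, gives $|1+G|^2\asymp 1+V_{\tilde\nu}$ on $\D$. Lemma \ref{potenvmucpvnutl} gives $|P|^2(1+V_\mu)\asymp 1+V_{\tilde\nu}$. Combining the two,
\[
|a|^2(1+V_\mu)=\frac{|P|^2(1+V_\mu)}{4^{N+1}|1+G|^2}\asymp 1 \quad\text{on } \D.
\]

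\textbf{Step 3: identifying the spaces.} Let $b$ be the outer Pythagorean mate, $|b|^2=1-|a|^2\ge\frac34$ on $\T$. Then $1/b\in H^\infty$ and $(b,a)$ is a Corona pair. Theorem \ref{T:main2}, or equivalently Corollary \ref{maincordmuqhbiffc2} applied with $A=2a$, gives $\mathcal D(\mu)=\mathcal H(b)=\mathcal M(\bar a)$.

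\textbf{Step 4: $\mathcal D(\mu)\ne\mathcal H((1+u)/2)$ for every inner $u$.} This is the main obstacle. Suppose equality held for some inner $u$. By Corollary \ref{cordmuhohfinniffc}, $|1-u|^2(1+V_\mu)\asymp 1$. Fix an index $\ell$ and approach $\eta_\ell$ radially, $z=r\eta_\ell$. Then $V_\mu(r\eta_\ell)\ge c_\ell/(1-r)^2$, so $|1-u(r\eta_\ell)|\lesssim 1-r$, and $u$ has angular derivative at $\eta_\ell$ with $u(\eta_\ell)=1$. By Julia--Carathéodory, $\eta_\ell$ is then an atom of the Clark measure $\sigma_1$ of $u$ at the point $1$.

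Now write $(1-|u|^2)/|1-u|^2=P_{\sigma_1}$. Since $1-|u|^2\le 2|1-u|$, the estimate $|1-u|\lesssim(1+V_\mu)^{-1/2}$ gives $P_{\sigma_1}\gtrsim (1-|z|^2)V_\mu(z)$ up to lower-order terms. Use this to show that $\sigma_1$ must have atoms at every $\zeta_n$ with masses comparable to $s_n$, say $\sigma_1(\{\zeta_n\})\ge c\,s_n$, via radial limits $(1-r)P_{\sigma_1}(r\zeta_n)\to\sigma_1(\{\zeta_n\})$. Conversely, the upper bound $|1-u|^2(1+V_\mu)\lesssim1$ forces $\sigma_1$ to be concentrated on $\operatorname{supp}\mu$.

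Finally evaluate the Herglotz integral $\frac{1+u}{1-u}$ near $\eta_\ell$ along points $z=\zeta_{n_k}(1-\epsilon_k)$, where $\zeta_{n_k}\to\eta_\ell$. The atom at $\eta_\ell$ makes $|1-u|\lesssim|z-\eta_\ell|$. On the other hand, $(1+V_\mu)^{-1/2}\approx|z-\eta_\ell|/\sqrt{c_\ell}$ while the atoms $s_{n_k}$ are weighted differently from the $t_{n_k}$ used in $a$. Choosing $\epsilon_k$ comparable to $s_{n_k}$, one should get a mismatch: $|1-u|\asymp|a|$ would force $\sigma_1(\{\zeta_n\})\asymp t_n/|P(\zeta_n)|^{?}$ inconsistently. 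The cleanest route to try first is the boundary identity $|1-u|^2=4|a_u|^2$, with $a_u=(1-u)/2$ the mate of $(1+u)/2$. Since $|a|^2(1+V_\mu)\asymp 1$ as well, this gives $|a_u|\asymp|a|$ on $\D$. Both functions are outer, so $a_u/a$ is an outer function bounded above and below, i.e.\ invertible in $H^\infty$. Then $\frac{1}{1+F_u}=a_u\asymp\frac{P}{1+G}$. Comparing $\operatorname{Re}$ of the Herglotz transforms near $\eta_\ell$, where $P$ vanishes but $\frac{1}{1+F_u}$ vanishes only like $1/\sigma_1(\{\eta_\ell\})\cdot|z-\eta_\ell|$, should produce a contradiction with the accumulation of atoms at $\eta_\ell$. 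Along the tangential approach through $\zeta_{n_k}$, the bounded inverse ratio conflicts with Lemma \ref{lemconseq}(d)-type summability at $\eta_\ell$. I expect this comparison to be the delicate step.
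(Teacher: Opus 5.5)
Your Steps 1--3 are correct and follow the paper's proof. Theorem \ref{mainthmcountablespt} for $\tilde\nu$ gives $|1+G|^2\asymp1+V_{\tilde\nu}$. Lemma \ref{potenvmucpvnutl} turns this into $|a|^2(1+V_\mu)\asymp1$. Then $|b|^2\ge\frac34$ and Corollary \ref{maincordmuqhbiffc2} give $\mathcal{D}(\mu)=\mathcal{M}(\bar a)=\mathcal{H}(b)$.

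Step 4, however, is not a proof, and the comparisons you propose do not by themselves produce a contradiction. At $\eta_\ell$ you keep only radial information, namely an angular derivative and hence an atom of $\sigma_1$ at $\eta_\ell$. That discards exactly what is needed. Every finite positive singular measure is the Clark measure $\sigma_1$ of the inner function $(F-1)/(F+1)$, where $F$ is its Herglotz transform. So a $\sigma_1$ with an atom at $\eta_\ell$ and atoms at points $\zeta_n\to\eta_\ell$ is perfectly possible.

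The comparison $|a_u|\asymp|a|$ does not give the mismatch you expect either. At $z=(1-\epsilon)\zeta_n$, $\epsilon\to0$, one has $|a(z)|\sim 2^{-N-2}\epsilon/s_n$, because the factor $|P(\zeta_n)|$ in $t_n$ cancels against $|P(z)|$. One also has $|a_u(z)|\sim\epsilon/(2\sigma_1(\{\zeta_n\}))$. Together these only yield the consistent relation $\sigma_1(\{\zeta_n\})\asymp s_n$. Nontangentially at $\eta_\ell$, both $P/(1+G)$ and $1/(1+F_u)$ vanish linearly, so there is no mismatch of rates there either. (Incidentally, $1-|u|^2\le2|1-u|$ bounds $P_{\sigma_1}$ from above, not from below.)

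The missing idea is analytic continuation of $u$ across $\eta_\ell$; this is the content of Proposition \ref{dmunehupohf}.
\begin{enumerate}
\item Since $V_\mu(z)\ge c_\ell|z-\eta_\ell|^{-2}$ for \emph{every} $z\in\mathbb{D}$, the estimate $|1-u|^2(1+V_\mu)\lesssim1$ gives $|1-u(z)|\le C|z-\eta_\ell|$ on the whole disc, not only along the radius.
\item Hence $|u|>\frac12$ on $\mathbb{D}\cap B(\eta_\ell,r)$ for small $r$. So $\eta_\ell$ lies outside the spectrum of $u$, and $u$ extends analytically to a neighbourhood $W$ of $\eta_\ell$.
\item The same reasoning with $V_\mu(z)\ge s_n^2|z-\zeta_n|^{-2}$ gives $u(z)\to1$ as $z\to\zeta_n$. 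So the extension satisfies $u(\zeta_n)=1$ for every $\zeta_n\in W$.
\item Since $\eta_\ell\in E\setminus\{\zeta_n:n\ge1\}$, infinitely many $\zeta_n$ lie in $W$ and accumulate at $\eta_\ell$. The identity theorem then forces $u\equiv1$.
\item But $\mathcal{H}\bigl((1+u)/2\bigr)=\mathcal{H}(1)=\{0\}\ne\mathcal{D}(\mu)$, a contradiction.
\end{enumerate}
This short argument replaces the whole Clark-measure and Herglotz comparison in your Step 4.
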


\begin{remark}\label{remaccptsupport}
The measure $\mu$ is concentrated on a countable set, but its support~$E$ (note that $\eta_\ell\in E$) can be uncountable. For instance, take for $\nu$ the measure of Proposition \ref{uncounsptcm} and for $\eta_\ell$ distinct points of $\{e^{it}:t\in C\}$; these are accumulation points of the atoms, and they are not atoms since the midpoints $c_I$ lie in the removed intervals, not in $C$.
Moreover, since $\operatorname{diam}(\overline{\mathbb{D}})=2$, we have $t_n\le2^Ns_n$, and so \eqref{summabiltywatap} follows from \eqref{assumpsummab} for $\nu$: in that example the theorem applies.
\end{remark}

Before the proof of Theorem \ref{mainthmcounsprtaccpt}, we present the general result that will allow us to conclude that the space $\mathcal{D}(\mu)$ that we are considering cannot be equal to a de Branges--Rovnyak space $\mathcal{H}(b)$ with $b=(1+u)/2$. Given a set $A\subseteq\mathbb{C}$ we denote by $A'$ the set of accumulation points of $A$, that is, the set of points $z\in\mathbb{C}$ such that every \emph{pierced} neighbourhood of $z$ intersects $A$. Notice that in general $A$ is not a subset of $A'$.

\begin{proposition}\label{dmunehupohf}
Let $\omega$ be a finite positive measure on $\T$.  Suppose that there exists $u\not\equiv1$ inner such that 
\[
    \HD(\omega)=\mathcal H\bigl((1+u)/2\bigr).
\]
Denote by $A_{\omega}$ the set of atoms of $\omega$, and by $A_\omega'$ the set of its accumulation points. Then, $\omega(\{\eta\})=0$ for every $\eta\in A_\omega'$. In particular, $A_\omega\cap A_\omega'=\emptyset.$
\end{proposition}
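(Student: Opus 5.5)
The natural route is to translate the equality $\HD(\omega)=\mathcal H\bigl((1+u)/2\bigr)$ into the potential condition of Corollary \ref{cordmuhohfinniffc},
\[
|1-u(z)|^2\bigl(1+V_\omega(z)\bigr)\asymp 1,\qquad z\in\D,
\]
and then read off the boundary behaviour of $u$ in the language of Clark measures. I will argue by contradiction. Suppose $\eta\in A_\omega'$ carries an atom of mass $c=\omega(\{\eta\})>0$, and let $\zeta_k\to\eta$ be distinct atoms of $\omega$ with masses $w_k>0$.

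The first step uses $V_\omega(z)\ge c|z-\eta|^{-2}$ and $V_\omega(z)\ge w_k|z-\zeta_k|^{-2}$. Inserted into the upper half of the estimate, these give
\[
|1-u(z)|\lesssim c^{-1/2}|z-\eta| \quad\text{and}\quad |1-u(z)|\lesssim w_k^{-1/2}|z-\zeta_k|, \qquad z\in\D .
\]
Thus $u$ has unimodular radial limit $1$ at $\eta$ and at every $\zeta_k$. Since $1-|u|\le|1-u|$, we also get $\liminf_{z\to\zeta}(1-|u(z)|)/(1-|z|)<\infty$ at each of these points. By Julia--Carath\'eodory, $u$ has a finite angular derivative at $\eta$ and at each $\zeta_k$. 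Hence the Clark measure $\sigma_1$ of $u$ (the representing measure of $\operatorname{Re}\frac{1+u}{1-u}$, which exists because $u\not\equiv1$) has atoms at $\eta$ and at all $\zeta_k$, with masses
\[
\sigma_1(\{\zeta\})=|u'(\zeta)|^{-1}.
\]
The second step is to bound these angular derivatives from above \emph{uniformly in $k$}. That would force $\sigma_1(\{\zeta_k\})\ge\kappa>0$ for infinitely many distinct points, contradicting the finiteness of $\sigma_1$.

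The main obstacle is exactly this uniformity, since the constant $w_k^{-1/2}$ above blows up. My first attempt will exploit the atom at $\eta$ rather than the one at $\zeta_k$. On $\D\cap B(\eta,r)$ we have the uniform estimate $|1-u(z)|\le C|z-\eta|$. Writing $H=(1+u)/(1-u)$, this gives $|H(z)|\gtrsim|z-\eta|^{-1}$ on that region, while $\operatorname{Re}H=P_{\sigma_1}$. I want to show that $\operatorname{Re}H$ stays bounded below by a multiple of $|z-\eta|^{-2}(1-|z|)$ near each $\zeta_k$ as well. This should come from comparing $P_{\sigma_1}$ with $\sigma_1(\{\eta\})P_\eta$, using $\sigma_1(\{\eta\})\ge C^{-1}$.

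From that I would try to conclude as follows. Near $\zeta_k$ the contribution $\sigma_1(\{\zeta_k\})P_{\zeta_k}$ must dominate, because $|1-u|\to0$ at $\zeta_k$. The lower bound $|1-u|^2\gtrsim(1+V_\omega)^{-1}$ then ties $\sigma_1(\{\zeta_k\})$ to $w_k$ and $|\zeta_k-\eta|$. The goal is to obtain either $\sum_k\sigma_1(\{\zeta_k\})=\infty$ or a uniform lower bound on these masses. Evaluating both sides of the potential estimate at the points $z_k=(1-\tfrac12|\zeta_k-\eta|)\zeta_k$ should give the required comparison.

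If this direct comparison fails, the fallback is the structure of Clark measures whose atoms accumulate at an atom. Near $\eta$ the inner function $u$ takes the value $1$ at $\eta$ with finite angular derivative. The accumulating values $u(\zeta_k)=1$ should then force unbounded oscillation of $\arg u$ on arcs adjacent to $\eta$. I would try to show that this oscillation is incompatible with the Lipschitz bound $|1-u(\zeta)|\le C|\zeta-\eta|$ on $\T\cap B(\eta,r)$, which follows from the first step. The final assertion $A_\omega\cap A_\omega'=\emptyset$ is then immediate, since every point of $A_\omega'$ has zero $\omega$-mass.
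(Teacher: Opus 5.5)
Your first step is correct and agrees with the paper's. The upper half of $|1-u|^2(1+V_\omega)\asymp1$ gives $|1-u(z)|\le C_\zeta|z-\zeta|$ on $\mathbb{D}$ for every atom $\zeta$, and your Julia--Carath\'eodory/Clark-measure reading of this is fine.

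The gap is the second step, which you flag but never close: you obtain no bound on $|u'(\zeta_k)|$ that is uniform in $k$, and the mass comparison you sketch cannot supply one. Let $z=r\zeta_k\to\zeta_k$ in the two-sided estimate, using $(1-r)^2V_\omega(r\zeta_k)\to w_k$ and $|1-u(r\zeta_k)|/(1-r)\to|u'(\zeta_k)|$. This gives $\sigma_1(\{\zeta_k\})=|u'(\zeta_k)|^{-1}\asymp w_k^{1/2}$, with constants independent of $k$. Since $w_k\to0$, the potential estimate itself forces $\sigma_1(\{\zeta_k\})\to0$. A uniform lower bound can therefore only come from independent information, while $\sum_k\sigma_1(\{\zeta_k\})\asymp\sum_k w_k^{1/2}$ may well be finite. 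The term $\sigma_1(\{\eta\})P_\eta$ is negligible near $\zeta_k$ and does not change this. The fallback is not justified either. The boundary data $u(\zeta_k)=1$ and $|1-u(\zeta)|\le C|\zeta-\eta|$ on an arc are compatible for a general unimodular function. Any forced ``oscillation'' must therefore come from analytic structure of $u$ at $\eta$, which the proposal never establishes.

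That missing structure is the key idea of the paper's proof. Choose $r$ with $C_\eta r<\tfrac12$; your own estimate then gives $|u|>\tfrac12$ on $\mathbb{D}\cap B(\eta,r)$. For an inner function this means $\eta$ lies outside the spectrum of $u$. No zeros of $u$ accumulate at $\eta$, and the singular measure of $u$ cannot charge an arc around $\eta$, since otherwise $-\log|u|$ would be unbounded there. Hence $u$ extends analytically to a neighbourhood $W$ of $\eta$. The paper then concludes immediately: $u-1$ vanishes at the infinitely many points $\zeta_k\in W$ accumulating at $\eta$, so $u\equiv1$.

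Your Clark-measure framework would also close with this fact. Since $|u'|$ is bounded on a compact neighbourhood of $\eta$ in $W$, you get $\sigma_1(\{\zeta_k\})\ge\kappa>0$ for all large $k$, contradicting $\sigma_1(\T)<\infty$ (or $w_k\to0$). Without the analytic continuation, however, the uniformity you need is simply not available.
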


\begin{proof}
Notice that
\[
V_\omega(z) = \int_{\mathbb{T}}\frac{\operatorname{d}\!\omega(\xi)}{|z-\xi|^2}\geq \frac{\omega(\mathbb{T})}{4}, \qquad z\in\overline{\mathbb{D}}.
\]
In particular, $1+V_\omega\asymp V_\omega$. By Corollary \ref{cordmuhohfinniffc},
\begin{equation*}
    |1-u(z)|^2V_\omega(z)\asymp1,\qquad z\in\mathbb D .
\end{equation*}
For any atom $\zeta\in A_\omega$  with mass $c_\zeta=\omega(\{\zeta\})>0$ we have $V_\omega(z)\ge c_\zeta/|z-\zeta|^2$, hence
\begin{equation}\label{u1estatom}
    |1-u(z)|\le C_\zeta\,|z-\zeta|,\qquad z\in\mathbb D.
\end{equation}
Suppose by contradiction that $\eta\in A_\omega\cap A_\omega'$.
Apply \eqref{u1estatom} to $\eta$, and choose $r>0$ with $C_\eta r<\frac12$. Then, on $\mathbb D\cap B(\eta,r)$ we have that $|1-u|<\frac12$, hence $|u|>\frac12$. This implies that $u$ can be extended analytically on a neighbourhood $W$ of $\eta$ (this is related to the fact that $\eta$ is not in the \emph{spectrum} of $u$, see for example \cite[Subsection 7.3]{GMR16}).

Since $\eta\in A_\omega'$, there exist infinitely many atoms $\xi_k$ in $W$. By \eqref{u1estatom} with $\zeta=\xi_k$ and letting $z\to\xi_k$ inside $\mathbb D$, we get $u(\xi_k)=1$. So the analytic function $u-1$ on $W$ has infinitely many distinct zeros $\xi_k$ accumulating at the interior point $\eta$ of $W$. This implies that $u\equiv1$, a contradiction.
\end{proof}

We prove the main theorem of this section.

\begin{proof}[Proof of Theorem \ref{mainthmcounsprtaccpt}]
We begin by considering the measure $\tilde\nu=\sum_{n=1}^\infty t_n^2\,\delta_{\zeta_n}$, where 
\(    t_n=s_n|P(\zeta_n)|, n\geq 1.
\) The weights $t_n$ are positive because $\zeta_n\ne\eta_\ell$ for all $n,\ell$, and $\sum_nt_n^2\le4^N\sum_ns_n^2<\infty$.  Condition \eqref{summabiltywatap} is precisely condition \eqref{assumpsummab} for $\tilde\nu$. We remark that, as in Lemma \ref{lemconseq}(a), this condition implies $\sum_nt_n<\infty$. Then, we can apply Theorem~\ref{mainthmcountablespt} to $\tilde\nu$. In particular, the function $a_0(z)=1/(1+G(z))$ is outer, $a_0(0)>0$, $\|a_0\|_\infty\le1$, $\HD(\tilde\nu)=\mathcal M(\overline{a_0})$ and $|a_0(z)|^2\bigl(1+V_{\tilde\nu}(z)\bigr)\asymp1$ uniformly on  $z\in\mathbb D$.

Since $a=2^{-N-1}Pa_0$, Lemma \ref{potenvmucpvnutl} yields
\[
\begin{aligned}
    |a(z)|^2\bigl(1+V_\mu(z)\bigr)
    &=2^{-2N-2}|a_0(z)|^2|P(z)|^2\bigl(1+V_\mu(z)\bigr)\\
    &\asymp |a_0(z)|^2\bigl(1+V_{\tilde\nu}(z)\bigr)
    \asymp1.
\end{aligned}
\]
The polynomial $P$ is a product of outer factors $1-\overline{\eta_\ell}z$, so $a$ is outer. Also,
\[
    a(0)=\frac1{2^{N+1}(1+\sum_{n\geq 1}t_n)}>0,\qquad
    \|a\|_\infty\le2^{-N-1}\,2^N=\frac12.
\]
Let $b$ be the normalized outer Pythagorean mate of $a$. Then $|b|^2=1-|a|^2\ge3/4$ a.e. on $\T$. Taking Poisson integrals of $\log|b|$ gives $|b(z)|\ge\sqrt3/2$ for every $z\in\mathbb D$. Thus $(b,a)$ is a Corona pair, and Corollary \ref{maincordmuqhbiffc2} gives
\[
    \mathcal{D}(\mu)=\mathcal M(\bar a)=\mathcal{H}(b).
\]
Finally, Proposition \ref{dmunehupohf} excludes every representation $\mathcal{D}(\mu)=\mathcal H((1+u)/2)$ with $u$ inner.
\end{proof}

\section*{Methodology}
The use of AI tools for exploratory discussions during the preparation of this work is acknowledged. More specifically, the Corona condition of $(b,a)$ in Theorem \ref{T:main1} and Theorem \ref{T:main2} comes from interaction of the authors with the LLMs. All mathematical statements, proofs, and references were independently checked, revised, and finalized by the authors, who take full responsibility for the content of the paper. 

\section*{Acknowledgments}
The first author thanks Iorgos Nikolaidis for helpful discussions on some preliminary ideas. The first two authors are
grateful to Andreas Hartmann and Javad Mashreghi for many stimulating discussions and for their generosity in sharing their mathematical insight. Their availability has been fundamental for our mathematical growth, and we thank them warmly.

\section*{Grant}
The first two authors are members of Gruppo Nazionale per l'Analisi Matematica, la Probabilità e le loro Applicazioni (GNAMPA) of Istituto Nazionale di Alta Matematica (INdAM).
The third author was supported by National Science Foundation of China (12271149), Natural Science Foundation of Hunan Province (2024JJ2008).

\bibliographystyle{abbrv}
\bibliography{biblio}

\vspace{22 pt}

%
%

\end{document}